\newcommand\dd{\mathrm{d}}
\newcommand\ud{\,\mathrm{d}}
\newcommand\Tr{\mathrm{Tr}}
\newcommand\EE{\mathbb{E}}
\newcommand\E{\mathbb{E}}
\newcommand\D{\mathsf{D}}
\newcommand\Ran{\mathsf{R}}
\newcommand\Ker{\mathsf{N}}
\newcommand{\RR}{\mathbb{R}}
\newcommand{\CC}{\mathbb{C}}
\newcommand{\Ll}{\mathscr{L}}
\newcommand{\inp}[2]{\langle #1,#2 \rangle}
\newcommand{\s}{^*}
\newcommand{\Ric}{\mathrm{Ric}}
\newcommand{\Nabla}{\nabla}
\DeclareMathOperator{\re}{Re}
\renewcommand{\Re}{{\rm Re}}
\theoremstyle{plain}
\newtheorem{theorem}{Theorem}[section]
\theoremstyle{remark}
\newtheorem{remark}[theorem]{Remark}
\newtheorem{example}[theorem]{Example}
\theoremstyle{plain}
\newtheorem{lemma}[theorem]{Lemma}
\newtheorem{proposition}[theorem]{Proposition}
\newtheorem{definition}[theorem]{Definition}
\newtheorem{hypothesis}[theorem]{Hypothesis}
\numberwithin{equation}{section}
\begin{document}

\title[The Hodge--Dirac operator associated with the Witten Laplacian]{$L^p$-Analysis of the 
Hodge--Dirac operator associated with Witten Laplacians on complete Riemannian 
manifolds}
\author{Jan van Neerven}
\author{Rik Versendaal}
\address{Delft Institute of Applied Mathematics//Delft University of 
Technology\\P.O. Box 5031, 2600 GA Delft\\The Netherlands}
\email{J.M.A.M.vanNeerven/R.Versendaal@TUDelft.nl}
\date\today
\keywords{Witten Laplacian, Hodge--Dirac operator,
$R$-bisectoriality, $H^\infty$-functional calculus, Bakry--Emery Ricci curvature}
\subjclass[2000]{Primary: 47A60; Secondary: 58A10, 58J35, 58J60}

\begin{abstract}
We prove $R$-bisectoriality and boundedness of the $H^\infty$-functional calculus in $L^p$ for all $1<p<\infty$ for the
Hodge--Dirac operator associated with Witten Laplacians on complete Riemannian 
manifolds with non-negative Bakry--Emery Ricci curvature on $k$-forms.
\end{abstract}

\thanks{The first-named author acknowledges financial support from the ARC 
Discovery Grant DP 160100941. The
second-named author is supported by the Peter Paul Peterich Foundation via TU Delft University Fund.}

\maketitle
\section{Introduction}

The Witten Laplacian 
was introduced by Witten \cite{Witten} as a deformation of the Hodge Laplacian 
on a complete Riemannian
manifold $M$ and has been subsequently studied by many authors; see
\cite{Bakry, Bismut, CD-Bellman, ElwRos, FuLiLi, HelNier, HelSjo, Li05, Li08, 
Li10, Yoshida} and the references cited therein. 
The {\em Witten Laplacian} associated with a smooth strictly 
positive function $\rho:M\to \RR$ 
is the operator 
$$
L_\rho : f \mapsto   \Delta f - \nabla \log\rho\cdot \nabla f, \quad f\in C_{\rm c}^\infty(M),
$$
where $\Delta = \nabla^*\nabla$ is the (negative) Laplace-Beltrami operator and 
$\nabla$ is the gradient. Identifying functions with  
0-forms, we have
\begin{align}\label{eq:Lrho}
L_\rho f= (\dd_\rho\ud_\rho^* + \ud_\rho^*\ud_\rho)f, \quad f\in C_{\rm c}(M),
\end{align}
where $\ud_\rho$ is the $L^2$-realisation of the exterior derivative $\ud$ with 
respect to the measure 
$m(\dd x) = \rho(x) \ud x$ on $M$, and $\ud_\rho\s$ is the adjoint operator. The 
representation \eqref{eq:Lrho} can be used to define the Witten Laplacian for $k$-forms for $k 
\neq 0$.  
In the special case $M = \RR^n$ and $\rho(x) = \exp(-\frac12|x|^2)$, $L_\rho$ corresponds to 
the Ornstein-Uhlenbeck operator. 

Let $m({\rm d}x) = \rho(x)\ud x$ is the 
weighted volume measure on $M$. Generalising the celebrated Meyer inequalities
for the Ornstein-Uhlenbeck operator,
Bakry \cite{Bakry} proved boundedness of the Riesz transform $\nabla L_\rho^{-1/2}$ 
on $L^p(M,m)$ for all $1<p<\infty$ under a curvature condition on $M$.
An extension of this result to the 
corresponding $L^p$-spaces of $k$-forms is contained in the same paper. 
These results have been subsequently extended into various directions. As a sample 
of the extensive literature on this topic we 
mention \cite{CD-Bellman, Li05, Li08, Li10,  Yoshida} (for the Witten 
Laplacian); see also 
\cite{AC, ACDH, BO, Coulhon-Duong, LJ, LHQ, Lo, Magniez, Sikora, Strichartz} 
(for the Laplace-Beltrami operator),
\cite{CMO, HMM, MM} (for the Hodge-de Rham Laplacian), and \cite{BG} (for 
sub-elliptic operators).

The aim of the present paper is to develop Bakry's result along a different 
line by analysing 
the Hodge--Dirac operator 
$$D_\rho = \ud_\rho + \ud_\rho\s$$ 
from the point of view of its functional calculus properties. Our main result 
can be stated as follows 
(the relevant definitions are given in the main body of the paper). 

\begin{theorem}\label{thm:1}
If $M$ has non-negative Bakry--Emery Ricci curvature on $k$-forms for all $1\le k\le n$, then the Hodge--Dirac operator $D_\rho$ is $R$-bisectorial and admits a bounded $H^\infty$-calculus 
in $L^p(\Lambda TM,m)$ for all $1<p<\infty$.
\end{theorem}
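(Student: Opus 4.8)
Write $L^p(\Lambda TM,m)=\bigoplus_{k=0}^{n}L^p(\Lambda^k TM,m)$. The Hodge--Dirac operator $D_\rho=\ud_\rho+\ud_\rho\s$ is closed and densely defined on this space, it is self-adjoint on $L^2$, and $D_\rho^2=\Delta_\rho$, where $\Delta_\rho=\ud_\rho\ud_\rho\s+\ud_\rho\s\ud_\rho$ is the Witten--Hodge Laplacian acting degreewise; on $L^2$ one has the Hodge decomposition $L^2(\Lambda TM,m)=\Ker D_\rho\oplus\overline{\Ran\ud_\rho}\oplus\overline{\Ran\ud_\rho\s}$. The plan rests on the standard reduction of the bisectorial functional calculus of a first-order operator to the sectorial functional calculus of its square together with boundedness of the associated Riesz transform: if $D$ is bisectorial with $D^2=L$, then $D$ has a bounded $H^\infty$-calculus on $\overline{\Ran D}$ if and only if $L$ has a bounded $H^\infty$-calculus on $\overline{\Ran L}$ and $\operatorname{sgn}(D):=DL^{-1/2}$ is bounded on $\overline{\Ran D}$ (Albrecht--Duong--McIntosh; see also the Hodge--Dirac framework of Axelsson--Keith--McIntosh). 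Since $L^p(\Lambda TM,m)$ is a UMD space for $1<p<\infty$, a bounded $H^\infty$-calculus there also entails $R$-bisectoriality, and on $\Ker D_\rho$ every claim is trivial. It therefore suffices to prove, for each $1<p<\infty$: \textbf{(A)} $\Delta_\rho$ admits a bounded $H^\infty$-calculus on $L^p(\Lambda^k TM,m)$ for every $0\le k\le n$; and \textbf{(B)} the Riesz transforms $\ud_\rho\Delta_\rho^{-1/2}$ and $\ud_\rho\s\Delta_\rho^{-1/2}$ are bounded on the corresponding $L^p$-spaces of forms. Indeed, since $\ud_\rho^2=(\ud_\rho\s)^2=0$ and $\ud_\rho,\ud_\rho\s$ intertwine the Laplacians in consecutive degrees, on $\overline{\Ran\ud_\rho\s}$ and $\overline{\Ran\ud_\rho}$ the operator $D_\rho\Delta_\rho^{-1/2}$ reduces to $\ud_\rho\Delta_\rho^{-1/2}$, respectively to $\ud_\rho\s\Delta_\rho^{-1/2}$; thus \textbf{(B)} yields both the boundedness of $\operatorname{sgn}(D_\rho)$ and the $L^p$-boundedness of the Hodge projections, so that the Hodge decomposition above persists in $L^p(\Lambda TM,m)$.

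Statement \textbf{(B)} is precisely Bakry's theorem on the $L^p$-boundedness of the Riesz transforms on $k$-forms under the non-negativity of the Bakry--Emery curvature \cite{Bakry}. For \textbf{(A)}, the case $k=0$ is classical: $\Delta_\rho=L_\rho$ generates a symmetric sub-Markovian semigroup, which admits a bounded $H^\infty$-calculus on $L^p$ for all $1<p<\infty$ (Stein's Littlewood--Paley theory; Cowling). For $k\ge 1$ I would deduce \textbf{(A)} from the scalar case by a domination argument. The Bochner--Weitzenb\"ock identity on $\Lambda^k TM$ reads $\Delta_\rho=\nabla\s\nabla+\mathcal{R}_\rho$, with $\mathcal{R}_\rho$ the Bakry--Emery--Weitzenb\"ock curvature endomorphism, which is non-negative precisely under the hypothesis of the theorem; a Kato-type inequality then gives the pointwise domination $|e^{-t\Delta_\rho}\omega|\le e^{-tL_\rho}|\omega|$ of the heat semigroup on $k$-forms by the scalar Witten semigroup. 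One then transfers the functional calculus from the dominating symmetric sub-Markovian semigroup to the dominated one: the scalar Littlewood--Paley--Stein square function estimates for $e^{-tL_\rho}$ carry over to $k$-forms by this domination, the gradient-type contributions to the square function being controlled by a further Bochner argument that again uses $\mathcal{R}_\rho\ge 0$; the resulting two-sided square function estimate for $\Delta_\rho$ on $L^p(\Lambda^k TM,m)$, together with the square function characterisation of the bounded $H^\infty$-calculus (McIntosh; Kalton--Weis) in this UMD space, gives the claim in \textbf{(A)}. With \textbf{(B)} and the reduction above, this proves Theorem~\ref{thm:1}.

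The crux is the passage from $0$-forms to $k$-forms in \textbf{(A)}: on a general complete manifold no Gaussian heat kernel bounds are at hand, so the curvature condition is the only structural input, and one must make rigorous (i) a Kato-type domination inequality for the Witten--Hodge Laplacian valid in the $L^p$-setting; (ii) the role of the non-negativity of the \emph{full} Bakry--Emery--Weitzenb\"ock endomorphism on $\Lambda^k TM$ --- which for $k\ge 2$ is strictly stronger than a non-negative Ricci curvature condition --- both for the domination and for controlling the gradient terms in the square function; and (iii) the transfer of the vector-valued Littlewood--Paley theory underlying the Kalton--Weis theorem through this domination. A secondary, technical, point is the justification of the manipulations with $\Delta_\rho^{-1/2}$ and with the Hodge projections on the subspaces $\overline{\Ran\ud_\rho}$, $\overline{\Ran\ud_\rho\s}$, which for $p\ne 2$ is exactly what the $L^p$-boundedness of Bakry's Riesz transforms in \textbf{(B)} is used for.
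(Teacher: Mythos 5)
Your overall architecture is close to the paper's: both proofs reduce the theorem to (A) a bounded $H^\infty$-calculus for the Witten Laplacian in each degree and (B) Bakry's $L^p$-bounds for the Riesz transforms on forms, and then pass between $D$ and $D^2=L$ by an abstract square-function/bisectoriality reduction. However, two steps are genuinely missing. First, the reduction theorem you quote (``$D$ has a bounded $H^\infty$-calculus iff $L$ does and $\mathrm{sgn}(D)=DL^{-1/2}$ is bounded'') takes the (R-)\emph{bisectoriality of $D$ on $L^p$ as a hypothesis}; for $p\neq 2$ this is not free and is one of the main results the paper has to prove. The paper does so by exhibiting $(I-itD)^{-1}$ as an explicit tridiagonal operator matrix whose off-diagonal entries $t\,\dd_k(I+t^2L_k)^{-1}=(\dd_kL_k^{-1/2})\,\psi(t^2L_k)$, $\psi(z)=\sqrt z/(1+z)$, are \emph{$R$-bounded} families --- this needs the Kalton--Weis $R$-boundedness of $\{\psi(t^2L_k)\}_{t>0}$ (hence $R$-sectoriality of $L_k$, obtained from the pointwise domination $|P^k_t\omega|\le P^0_t|\omega|$ via a Rademacher/square-function lemma), not merely uniform bounds --- together with a rigorous proof that $D^2=L$ as closed operators and the requisite intertwining and domain/density facts ($C^\infty_{\rm c}$ dense in $\D_p(L_k^{1/2})$, $\D_p(L_k^{1/2})\subseteq\D_p(\dd_k)\cap\D_p(\delta_{k-1})$, etc.). Your remark that UMD converts a bounded $H^\infty$-calculus into $R$-bisectoriality is correct, but it cannot be used before the calculus for $D$ is established, and the calculus for $D$ cannot be established by your reduction before bisectoriality is known; as written the argument is circular at this point.

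Second, and more seriously, in step (A) the passage from $0$-forms to $k$-forms by ``transferring the functional calculus through the domination $|e^{-tL_k}\omega|\le e^{-tL_0}|\omega|$'' would fail as stated: pointwise semigroup domination transfers \emph{$R$-sectoriality} (this is exactly the paper's Lemma \ref{lem:Domination}), but not a bounded $H^\infty$-calculus, because the quantity one must control in the square function, $t\,\partial_t P^k_t\omega$ (equivalently $\psi(tL_k)\omega$ for $\psi(z)=\sqrt z e^{-\sqrt z}$), is not pointwise dominated by anything built from $P^0_t|\omega|$. What is actually needed is the two-sided Littlewood--Paley--Stein estimate for the semigroup on $k$-forms under $Q_k\ge 0$; the paper imports this from Yoshida's probabilistic construction of a Markov process on forms \cite[Theorem 5.3]{Yoshida} and then concludes via the Kalton--Weis square-function characterisation for $R$-sectorial operators. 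You correctly identify this as ``the crux'' and point to the right circle of ideas, but no argument is supplied, and the specific mechanism you propose (domination plus ``a further Bochner argument'') is not adequate on its own. The remaining ingredients --- Bakry's Riesz bounds, the Weitzenb\"ock identification $\omega\cdot V_k\omega=Q_k(\omega,\omega)$, and the technical care with $L^{-1/2}_k$ and the Hodge projections --- are correctly identified and match the paper.
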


By standard arguments (cf. \cite{AKM}), the boundedness of the $H^\infty$-calculus
of $D_\rho$ implies (by considering the operator sgn$(D_\rho)$, which is then 
well defined through the functional calculus) the boundedness of the Riesz transform
$D_\rho L_\rho^{-1/2} = {\rm sgn}(D_\rho)$.  
As such our results may be thought of as a strengthening of those in \cite{Bakry}.

In the unweighted case $\rho\equiv 1$, the second assertion of Theorem \ref{thm:1} is essentially known, although we are not 
aware of a place where it is formulated explicitly or in some equivalent form. It can be pieced together from known results as follows. Firstly, \cite[Theorem 5.12]{AMR}
asserts that the unweighted Hodge--Dirac operator $D$ has a bounded $H^\infty$-calculus on the Hardy space
$H^p(\Lambda TM)$, even for $1\le p\le \infty$, provided the volume measure has the so-called doubling property. 
By the Bishop comparison theorem (see \cite{BC}), this property is always satisfied if $M$ has non-negative Ricci curvature. 
Secondly, for $1<p<\infty$, this Hardy space is subsequently identified in  \cite[Theorem 8.5]{AMR} to be the closure in $L^p(\Lambda TM)$
of the range of $D$, 
provided the heat kernel associated with $L$ satisfies Gaussian bounds on $k$-forms for all $0\le k\le n$.
When $M$ has non-negative Ricci curvature, such bounds were proved
in \cite{LY} for $0$-forms, i.e., for functions on $M$. The bounds for $k$-forms then follow, under the curvature assumptions in the present paper, via pointwise domination
of the heat kernel on $k$-forms by the heat kernel for $0$-forms (cf. \eqref{eq:ptw} below).
Modulo the kernel-range decomposition decomposition $L^p(\Lambda TM,m) = \Ker(D) \oplus \overline{\Ran(D)}$ (which follows from $R$-bisectorialy proved in the present paper, but could also be established on the basis of other known results), this gives 
the boundedness of the $H^\infty$-calculus in $L^p(\Lambda TM,m)$ in the unweighted case. 

In the weighted case, this approach cannot be pursued due to the absence of doubling and 
Gaussian bounds. Instead, our approach exploits the fact, proved in \cite{Yoshida}, that the non-negativity of the Bakry--Emery Ricci curvature 
implies, among other things, square function estimates on $k$-forms.

The analogue of Theorem \ref{thm:1} for the Hodge--Dirac operator associated with the 
Orn\-stein-Uhlenbeck operator has been 
established, in a more general formulation, in \cite{MN}. The related problem of the $L^p$-boundedness of 
the $H^\infty$-calculus 
of Hodge--Dirac operators associated with the Kato square root problem was 
initiated by the influential 
paper \cite{AKM} and has been studied by many authors \cite{AS, FMP, HM, HMP1, 
HMP2, MM}.\smallskip

The organisation of the paper is as follows. After a brief introduction to 
$R$-(bi)sectorial 
operators and  $H^\infty$-calculi in Section 
\ref{sec:sectorial}, we introduce the Witten Laplacian $L_\rho$ in Section 
\ref{sec:WittenLaplacian}
and recall some of its properties. Among others we prove that it is 
$R$-sectorial of angle less than $\frac12\pi$ and admits a bounded 
$H^\infty$-calculus in $L^p$ for $1<p<\infty$. In
Section \ref{sec:HodgeDirac} this result,
together with the identity $D_\rho^2 = L_\rho$, is used to prove the 
corresponding assertions for the Hodge--Dirac 
operator $D_\rho$.
  
On some occasions we will use the notation $a\lesssim b$ to signify that there exists
a constant $C$ such that $a \le Cb$. To emphasise the dependence of $C$
on parameters $p_1$, $p_2$, \dots, we shall write $a \lesssim_{p_1,p_2,\dots} b$. 
Finally we write $\eqsim$ (respectively, $\eqsim_{p_1,p_2,\dots}$) if
both $a\lesssim b$ and $b\lesssim a$ (respectively, 
$a \lesssim_{p_1,p_2,\dots} b$ and $b \lesssim_{p_1,p_2,\dots} a$) hold.

\section{$R$-(Bi)sectorial operators and the $H^\infty$-functional 
calculus}\label{sec:sectorial}

In this section we present a brief overview of the various notions from 
operator theory used in this paper.

\subsection{$R$-boundedness}

Let $X$ and $Y$ be Banach spaces and let $(r_j)_{j\ge 1}$ be a
sequence of independent {\em Rademacher variables} defined on a probability
space $(\Omega,\mathbb{P})$, i.e., $\mathbb{P}(r_j=
1) = \mathbb{P}(r_j=-1) = \tfrac12$ for each $j$.

A collection of bounded linear operators $\mathscr{T} \subseteq
\Ll(X,Y)$ is said to be {\em $R$-bounded} if there exists a $C\ge 0$ such that
for all $M=1,2,\dots$ and  all choices of $x_1, \ldots, x_M \in X$ and $T_1,
\ldots, T_M  \in \mathscr{T}$ 
 we have $$\E \Big\|\sum_{m=1}^M r_m T_m x_m\Big\|^2
   \leq C^2 \E \Big\|\sum_{m=1}^M r_m  x_m\Big\|^2,$$
where $\E$ denotes the expectation with respect to $\mathbb{P}$.
By considering the case $M=1$ one sees that every $R$-bounded family of 
operators is uniformly bounded. In Hilbert spaces the converse holds,
as is easy to see by expanding the square of the norm as an inner product
and using that $\E r_mr_n = \delta_{mn}$.

Motivated by certain square function estimates in harmonic analysis, 
the theory of $R$-boundedness was initiated in \cite{CPSW} and has found 
widespread use in various areas of analysis, among them parabolic PDE,
harmonic analysis and stochastic analysis. We refer the reader to \cite{DHP, 
HNVW1, HNVW2, KW}
for detailed accounts.
   
\subsection{Sectorial operators}\label{subsec:sectorial}

For $\sigma \in (0,\pi)$ we consider the open sector
$$\Sigma_\sigma^+ := \{ z
\in \CC: \ z \neq 0,\ |\arg z| < \sigma \}.$$ A closed densely defined operator 
$(A,\D(A))$
acting in a complex Banach space $X$ is said to be {\em sectorial}  of angle
$\sigma\in (0,{\pi})$ if $\sigma(A) \subseteq \overline{\Sigma_\sigma^+}$ and
the set $\{ \lambda (\lambda-A)^{-1} : \lambda \notin 
\overline{\Sigma_\vartheta^+}\}$ is bounded
for all $\vartheta \in (\sigma,{\pi})$. The least angle of sectoriality is
denoted by $\omega^+(A)$. If $A$ is sectorial of angle $\sigma\in (0,\pi)$ and the set $\{ \lambda
(\lambda-A)^{-1} : \lambda \notin \overline{\Sigma_\vartheta^+}\}$ is 
$R$-bounded for all
$\vartheta \in (\sigma,{\pi})$, then $A$ is said to be $R$-{\em sectorial} of
angle $\sigma$. The least angle of $R$-sectoriality is
denoted by $\omega_R^+(A).$

\begin{remark}\label{rem} We wish to point out that most authors (including 
\cite{DHP, HNVW2, KW}) impose the 
additional requirements that $A$ be injective and have dense range. In the 
setting 
considered here this would be inconvenient: already in the special case of the 
Ornstein-Uhlenbeck operator, the kernel is non-empty. It is worth 
noting, however, (see \cite[Proposition 2.1.1(h)]{Haase}) 
that a sectorial operator $A$ on a reflexive Banach space $X$ induces a
direct sum decomposition
 \begin{equation}\label{eq:reflexive}
X = \mathsf{N}(A) \oplus \overline{\mathsf{R}(A)}.
 \end{equation}
The part of $A$ in $\overline{\mathsf{R}(A)}$ is sectorial and injective and has
dense range. Thus, $A$ decomposes into a trivial part
and a part that is sectorial in the more restrictive sense of  \cite{DHP, 
HNVW2, KW}.
Since we will be working with $L^p$-spaces in the reflexive range $1<p<\infty$
the results of \cite{DHP, HNVW2, KW} can be applied along this decomposition.
\end{remark}

The typical example of a sectorial operator is the realisation of the Laplace 
operator 
$\Delta$ in $L^p(\RR^n)$, $1\le p<\infty$, and this operator is $R$-sectorial 
if $1<p<\infty$.
More general examples, including the Laplace-Beltrami operator, are discussed 
in \cite{DHP, HNVW2, KW}.

\subsection{Bisectorial operators}\label{subsec:bisectorial}

The theory of sectorial operators has a bisectorial counterpart. We refer the reader
to \cite{ADM, AMN, Duelli} for more information.
For $0<\sigma<\frac12\pi$ we set $\Sigma_\sigma^-:= -\Sigma_\sigma^+$ and 
$$\Sigma_\sigma ^\pm := \Sigma_\sigma^+ \cup \Sigma_\sigma^-.$$
The set $\Sigma_\sigma ^\pm$ is called the {\em bisector} of angle $\sigma$.
A closed densely defined linear operator $(A,\D(A))$ acting in a complex Banach 
space $X$ is called  
{\em bisectorial} of angle $\sigma$  if $\sigma(A) \subseteq 
\overline{\Sigma_\sigma^\pm}$ and
the set $\{ \lambda (\lambda-A)^{-1} : \lambda \notin 
\overline{\Sigma_\vartheta^\pm}\}$ is bounded
for all $\vartheta \in (\sigma,\frac12{\pi})$. The least angle of 
bisectoriality is
denoted by $\omega^\pm(A)$. If $A$ is bisectorial and the set $\{ \lambda
(\lambda-A)^{-1} : \lambda \notin \overline{\Sigma_\vartheta^\pm}\}$ is $R$-bounded 
for all
$\vartheta \in (\sigma,\frac12{\pi})$, then $A$ is said to be $R$-{\em 
bisectorial} of
angle $\sigma\in (0,\frac12{\pi})$. The least angle of $R$-bisectoriality is
denoted by $\omega_R^\pm(A).$

\begin{remark}
If $A$ is bisectorial (of angle $\vartheta$), then $iA$ is sectorial (of angle 
$\frac12\pi +\vartheta$), and therefore Remark \ref{rem} 
applies to bisectorial operators as well. 
\end{remark}

Typical examples of bisectorial operators are $\pm i \ud/\ud x$ in $L^p(\RR)$ 
and the Hodge--Dirac operator 
$\Bigl(\begin{matrix}
       0 & \nabla\s \\ \nabla & 0 
       \end{matrix}\Bigr)$
on $L^p(\RR^n)\oplus L^p(\RR^n;\CC^n)$, $1\le p<\infty$. These operators are 
$R$-bisectorial if $1<p<\infty$. 

\subsection{The $H^\infty$-functional calculus}

In a Hilbert space setting, the $H^\infty$-functional calculus was introduced in 
\cite{McI}. It was extended
to the more general setting of Banach spaces in \cite{CDMY}. For detailed 
treatments we refer the 
reader to \cite{DHP, Haase, HNVW2, KW}.

Let $H^\infty(\Sigma_\sigma^+)$ be the space of all bounded holomorphic
functions on $\Sigma_\sigma^+$, and let $H^1(\Sigma_\sigma^+)$ denote the
space of all holomorphic
functions $\psi:\Sigma_\sigma^+\to \CC $  satisfying
$$\sup_{|\nu|<\sigma} \int_0^\infty |\psi(e^{i\nu}t)|\, \frac{\ud t}{t} < 
\infty.$$ 
If $A$ is a sectorial operator
and $\psi$ is a function in $H^1(\Sigma_\sigma^+)$ with $0 <
\omega^+(A) < \sigma<\pi$, we may define the bounded operator
$\psi(A)$ on $X$ by the Dunford integral
 $$\psi(A)x := \frac{1}{2\pi i} \int_{\partial \Sigma_{\nu}^+} \psi(z) 
(z-A)^{-1}x \ud z, \quad x \in X, $$
where $\omega^+(A) <\nu< \sigma$ and $\partial{\Sigma_{\nu}^+}$ is parametrised
counter-clockwise. By Cauchy's theorem this definition does
not depend on the choice of $\nu.$

A sectorial operator $A$ on $X$ is said to admit a {\em bounded
$H^\infty(\Sigma_{\sigma}^+)$-functional calculus}, or a {\em bounded
$H^\infty$-calculus of angle $\sigma$},  if there exists a
constant $C_\sigma\ge 0$ such that for all $\psi \in H^1(\Sigma_\sigma^+)\cap
H^\infty(\Sigma_\sigma^+)$ and all $x\ \in X$ we have
 \begin{align*}
  \| \psi(A) x\| \leq C_\sigma \|\psi\|_{\infty} \|x\|,
 \end{align*}
where $\|\psi\|_{\infty}=\sup_{z\in \Sigma_\sigma^+}|\psi(z)|$. The infimum of 
all angles $\sigma$ for which such a constant $C$ exists 
is denoted by $\omega_{H^\infty}^+(A).$ We
say that a sectorial operator $A$ admits a {\em bounded
$H^\infty$-calculus} if it admits a bounded
$H^\infty(\Sigma_{\sigma}^+)$-calculus for some $0<\sigma<\pi$.

Typical examples of operators having a bounded $H^\infty$-calculus 
include the sectorial operators mentioned in Subsection \ref{subsec:sectorial}. In fact 
it requires quite some effort to construct sectorial operators
without a bounded $H^\infty$-calculus, and to this date only rather
artificial constructions of such examples are known.

Replacing the role of sectors by bisectors, the above definitions 
can be repeated for bisectorial operators.
The examples of bisectorial operators mentioned in Subsection 
\ref{subsec:bisectorial} have a bounded $H^\infty$-calculus.

\subsection{$R$-(bi)sectorial operators and bounded $H^\infty$-functional calculi}

The following result is a straightforward generalisation of \cite[Proposition 
8.1]{AMN} and 
\cite[Section H]{ADM} (see \cite[Chapter 10]{HNVW2} for the present 
formulation):

\begin{proposition}\label{prop:ADM} Suppose that $A$ is an $R$-bisectorial 
operator on a Banach space of finite cotype. 
Then
$A^2$ is $R$-sectorial, and for each $\omega \in (0,\frac12\pi)$ the following 
assertions are equivalent: 
\begin{enumerate}
\item[\rm(1)] $A$ admits a bounded $H^\infty(\Sigma_\omega^\pm)$-calculus;
\item[\rm(2)] $A^2$ admits a bounded $H^\infty(\Sigma_{2\omega}^+)$-calculus. 
\end{enumerate}
\end{proposition}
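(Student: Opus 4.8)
The plan is to adapt the arguments of \cite{ADM} (Section H) and \cite{AMN} (Proposition 8.1) from the Hilbert space setting to the randomised Banach space setting: on a space of finite cotype the Hilbert space quadratic estimates used there have $R$-bounded (Rademacher, or $\gamma$-) analogues. For the first assertion, fix $\vartheta\in(\omega_R^\pm(A),\tfrac12\pi)$. If $\mu\notin\overline{\Sigma_{2\vartheta}^+}$ then both square roots $\pm\sqrt\mu$ lie outside $\overline{\Sigma_\vartheta^\pm}$, and the partial fraction identity
\[
  (\mu-A^2)^{-1}=\frac{1}{2\sqrt\mu}\bigl[(\sqrt\mu-A)^{-1}+(\sqrt\mu+A)^{-1}\bigr]
\]
exhibits $\mu(\mu-A^2)^{-1}=\tfrac12\bigl[\sqrt\mu(\sqrt\mu-A)^{-1}+\sqrt\mu(\sqrt\mu+A)^{-1}\bigr]$ as a sum of two reparametrised subfamilies of $\{\lambda(\lambda-A)^{-1}:\lambda\notin\overline{\Sigma_\vartheta^\pm}\}$; since $R$-boundedness is stable under sums, $A^2$ is $R$-sectorial with $\omega_R^+(A^2)\le 2\omega_R^\pm(A)$. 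For the functional calculus equivalence one may throughout restrict to $\overline{\Ran(A)}$: as $iA$ is sectorial, Remark~\ref{rem} gives $X=\Ker(A)\oplus\overline{\Ran(A)}$, and because $\Ker(A^2)=\Ker(A)$ and $\overline{\Ran(A^2)}=\overline{\Ran(A)}$ the same splitting serves $A^2$, with trivial kernel parts.

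\emph{Proof of (1)$\Rightarrow$(2).} This implication is elementary and uses neither $R$-boundedness nor finite cotype. Given $g\in H^1(\Sigma_{2\omega}^+)\cap H^\infty(\Sigma_{2\omega}^+)$, the pull-back $f:=g\circ(\cdot)^2$ lies in $H^1(\Sigma_\omega^\pm)\cap H^\infty(\Sigma_\omega^\pm)$ with $\|f\|_\infty=\|g\|_\infty$ (for the $H^1$-bound substitute $s=t^2$, for which $\tfrac{\dd s}{s}=2\,\tfrac{\dd t}{t}$, in the defining supremum). Inserting the resolvent identity above into the Dunford integral for $g(A^2)$ and deforming the contour by $\mu=z^2$ yields the composition rule $g(A^2)=f(A)$, whence $\|g(A^2)\|\le C_\omega\|f\|_\infty=C_\omega\|g\|_\infty$. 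After the routine extension from $H^1\cap H^\infty$ to $H^\infty$ via the Convergence Lemma, this is (2), with the same constant.

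\emph{Proof of (2)$\Rightarrow$(1).} Here the finite cotype hypothesis enters, through the Banach space form of McIntosh's theorem (see \cite[Chapter 10]{HNVW2}, \cite{KW}, and the bisectorial variant underlying \cite{ADM,AMN}): on a space of finite cotype an $R$-(bi)sectorial operator $B$ admits a bounded $H^\infty$-calculus if and only if it satisfies two-sided square function estimates $\|x\|\eqsim_\psi\|t\mapsto\psi(tB)x\|_\gamma$ on $\overline{\Ran(B)}$, for one --- equivalently for every --- nonzero auxiliary function $\psi$ of the appropriate angle, the $\gamma$-norm being that of the square function over $(\RR_+,\tfrac{\dd t}{t})$. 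By (2), $A^2$ satisfies these square function estimates. Pick an \emph{even} auxiliary function $\psi(z)=\phi(z^2)$ (say $\phi(w)=w(1+w)^{-2}$, so $\psi(z)=z^2(1+z^2)^{-2}$); then $\psi(tA)=\phi(t^2A^2)$, and the substitution $s=t^2$ identifies, up to a harmless constant, the square function for $A$ built from $\psi$ with that for $A^2$ built from $\phi$. Hence $A$ too satisfies the square function estimates, and the same theorem gives $A$ a bounded $H^\infty(\Sigma_\omega^\pm)$-calculus. (Read in reverse, this re-proves (1)$\Rightarrow$(2).) An equivalent, more hands-on packaging: split any $f\in H^1\cap H^\infty(\Sigma_\omega^\pm)$ into even and odd parts $f=f_e+f_o$; then $f_e(A)=g_e(A^2)$ and $f_o(A)=\mathrm{sgn}(A)\,k(A^2)$ with $g_e(w)=f_e(\sqrt w)$ and $k(w)=f_o(\sqrt w)$ both in $H^1\cap H^\infty(\Sigma_{2\omega}^+)$ of sup-norm $\le\|f\|_\infty$, so that, \emph{granting} boundedness of $\mathrm{sgn}(A)$ on $\overline{\Ran(A)}$, (2) yields $\|f(A)\|\le(1+\|\mathrm{sgn}(A)\|)\,C_{2\omega}\|f\|_\infty$ --- with the angle $\omega$ on the nose --- and the boundedness of $\mathrm{sgn}(A)$ is exactly what the square function estimates for $A$ provide.

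\emph{Where the difficulty lies.} The whole weight of the argument is in the previous paragraph, and specifically in the equivalence ``bounded $H^\infty$-calculus $\Longleftrightarrow$ two-sided square function estimates'': the finite cotype hypothesis is precisely what makes the randomised version of this equivalence go through --- the upper square function bound comes essentially for free from $R$-sectoriality together with cotype, whereas the implication ``lower square function bound $\Rightarrow$ bounded $H^\infty$-calculus'' requires a genuine $R$-boundedness/duality argument --- and one must carry the bisectorial bookkeeping so that the estimates at angle $\omega$ for $A$ correspond to the estimates at angle $2\omega$ for $A^2$. Everything else --- the convergence and admissibility of the contour deformation $\mu=z^2$ behind $g(A^2)=(g\circ(\cdot)^2)(A)$, the reduction to $\overline{\Ran(A)}$, and the passage from $H^1\cap H^\infty$ to $H^\infty$ via the Convergence Lemma --- is routine.
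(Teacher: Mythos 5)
The paper offers no proof of this proposition --- it is quoted from the literature (\cite{ADM}, \cite{AMN} and \cite[Chapter 10]{HNVW2}) --- and your sketch reproduces the standard argument given in those references: the partial-fraction identity for the $R$-sectoriality of $A^2$, the composition rule $g(A^2)=(g\circ(\cdot)^2)(A)$ for (1)$\Rightarrow$(2), and the transfer of two-sided $\gamma$-square function estimates between $A$ and $A^2$ via even auxiliary functions (with finite cotype entering exactly where you say it does) for (2)$\Rightarrow$(1). The proposal is therefore correct and takes essentially the same route the authors are pointing to.
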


\section{The Witten Laplacian}\label{sec:WittenLaplacian}

Let us begin by introducing some standard notations from differential geometry.
For unexplained terminology we refer to \cite{Grig, Lee}.

Throughout this paper we work on a complete Riemannian manifold $(M,g)$ of 
dimension $n$. 
The exterior algebra over the 
tangent bundle $TM$ is denoted by $$\Lambda TM := \bigoplus_{k=0}^n 
\Lambda^kTM.$$
Smooth sections of $\Lambda^kTM$ are referred to as {\em $k$-forms}. 
We set $$
C_{\rm c}^\infty(\Lambda TM) := \bigoplus_{k=0}^nC_{\rm 
c}^\infty(\Lambda^kTM),$$
where $C_{\rm c}^\infty(\Lambda^kTM)$ denotes the vector space of smooth, 
compactly supported $k$-forms.
The inner product of two $k$-forms $\ud x^{i_1} \wedge 
\cdots \wedge \dd x^{i_k}$ and $\ud x^{j_1} \wedge \cdots \wedge \dd x^{j_k}$ 
is defined, in a coordinate chart $(U,x)$, as
$$
(\dd x^{i_1} \wedge \cdots \wedge \dd x^{i_k})\cdot(\dd x^{j_1} \wedge \cdots 
\wedge \dd x^{j_k}) := \det(g^{i_r j_s})_{r,s},
$$
where $(g^{ij})$ is the inverse of the matrix $(g_{ij})$ representing $g$ in 
the chart $(U,x)$.
This definition extends to general $k$-forms by linearity.
For smooth sections $\omega,\eta$ of  $\Lambda TM$, say $\omega = \sum_{k=0}^n 
\omega^k$ 
and $\eta = \sum_{k=0}^n \eta^k$,
we define  $$\omega\cdot\eta := 
\sum_{k=0}^n \omega^k\cdot\eta^k,$$ and we write 
$|\omega| := (\omega\cdot\omega)^{1/2}.$

We now fix a strictly positive 
function $\rho \in C^\infty(M)$ and 
consider the measure $$m(\dd x) := \rho(x) \ud x$$ on $M$, where $\ud x$ is the 
volume measure.
For $1 \leq p < \infty$, we define $L^p(\Lambda^kTM,m)$ to be the Banach space 
of all measurable $k$-forms for which the norm $$\Vert \omega\Vert _p := 
\Big(\int_M 
|\omega|^p \ud m\Big)^{1/p}$$ 
is finite, identifying two such
forms when they agree $m$-almost everywhere on $M$. Equivalently, we could 
define this space 
as the completion of $C_{\rm c}^\infty(\Lambda^kTM)$ with respect to 
the norm $\Vert \cdot\Vert _p$.  
 Finally, we define 
$$
L^p(\Lambda TM,m) := \bigoplus_{k=0}^n L^p(\Lambda^kTM,m)
$$
and endow this space with the norm $\Vert\cdot\Vert_p$ defined by 
$\Vert \omega\Vert_p = \sum_{k=0}^n 
\Vert \omega^k\Vert_p^p$, where $\omega = \sum_{k=0}^n \omega^k$ for $k$-forms 
$\omega^k$. 
In the case of $p = 2$, we will denote the $L^2(\Lambda^kTM,m)$ inner product 
of two $k$-forms $\omega,\eta \in L^2(\Lambda^kTM,m)$ by
$$
\inp{\omega}{\eta}_\rho := \int_M \omega\cdot\eta~\dd m.
$$
Here, the subscript $\rho$ indicates the dependence of the inner product on the 
function $\rho$. When considering the $L^2(\Lambda^kTM,\dd x)$ inner product, 
we will simply write $\inp{\cdot}{\cdot}$. 

The exterior derivative, defined a priori only on 
$C_{\rm c}^\infty(\Lambda TM)$, is denoted by $\dd $. Its 
restriction as a linear operator from $C_{\rm c}^\infty(\Lambda^kTM)$ to 
$C_{\rm c}^\infty(\Lambda^{k+1}TM)$
is denoted by $\dd_k$. As a 
densely defined operator from $L^2(\Lambda^kTM,m)$ to $L^2(\Lambda^{k+1}TM,m)$, 
$\dd_k$ is easily checked to be closable. With slight abuse of notation, its 
closure will again 
be denoted by $\dd_k$. Its adjoint is well defined as a closed densely defined 
operator from $L^2(\Lambda^{k+1}TM,m)$ to $L^2(\Lambda^{k}TM,m)$. 
We will denote this adjoint operator by $\delta_k$. It maps $C_{\rm 
c}^\infty(\Lambda^{k+1}TM)$ into $C_{\rm c}^\infty(\Lambda^kTM)$.

\begin{remark}\label{rho}
It would perhaps be more accurate to follow the notation used in the Introduction 
and denote the operators $\dd$, $\dd_k$ and 
$\delta_k$ by $\dd_\rho$, $\dd_{\rho,k}$ and $\dd\s_{\rho,k}$ respectively, to 
bring out their dependence on $\rho$, but this would unnecessarily burden the 
notation. 
\end{remark}

In Lemma \ref{generaldiv} below we will state an identity relating
$\delta_k$ to the operator $\dd_k^*$, the adjoint of $\dd_k$ with respect to 
the volume measure $\dd x$. 
For this purpose we need the following definition.
Let $k \in \{1,\ldots,n\}$. Let $\omega$ be a $k$-form and $X$ a smooth vector 
field. 
We define $\iota(X)\omega$ as the $(k-1)$-form given by
$$
\iota(X)\omega(Y_1,\ldots,Y_{k-1}) = \omega(X,Y_1,\ldots,Y_{k-1})
$$
for smooth vector fields $Y_1,\ldots,Y_{k-1}$. We refer to $\iota$ as the {\em 
contraction on 
the first entry with respect to $X$}. The next two lemmas are implicit in 
\cite{Bakry}; we include
proofs for the reader's convenience.

\begin{lemma}\label{contraction}
For all smooth $k$-forms $\omega$ and $(k-1)$-forms $\epsilon$ 
and compactly supported smooth functions $f$ on $M$ we have 
$$
\omega\cdot(\dd f\wedge\epsilon) = \iota(\dd f^*)\omega\cdot\epsilon,
$$
where $\ud f^*$ is the smooth vector field
associated to the 1-form $\ud f$ by duality with respect to the Riemannian metric $g$.
\end{lemma}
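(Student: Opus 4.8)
The identity asserted in Lemma \ref{contraction} is pointwise on $M$ and purely algebraic, so the plan is to fix a point $p\in M$ and verify it in the fibre over $p$, using an orthonormal basis of $1$-forms adapted to $\dd f(p)$; this is essentially the familiar fact that interior multiplication by a vector $X$ is adjoint to exterior multiplication by $X^*$ with respect to the induced inner products on forms. If $\dd f(p)=0$ then also $\dd f^*(p)=0$, and both sides of the claimed identity vanish at $p$; hence I may assume $\dd f(p)\neq 0$ and pick an orthonormal basis $e^1,\dots,e^n$ of the $1$-forms at $p$, with dual basis $e_1,\dots,e_n$, such that $e^1=\dd f(p)/|\dd f(p)|$. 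Then $\dd f^*(p)=|\dd f(p)|\,e_1$, and the exterior products $e^I:=e^{i_1}\wedge\cdots\wedge e^{i_k}$ over strictly increasing multi-indices $I=\{i_1<\cdots<i_k\}$ form an orthonormal basis of the space of $k$-forms at $p$; this follows at once from the defining formula $(\dd x^{i_1}\wedge\cdots\wedge\dd x^{i_k})\cdot(\dd x^{j_1}\wedge\cdots\wedge\dd x^{j_k})=\det(g^{i_rj_s})_{r,s}$, since in an orthonormal basis $g^{ij}=\delta^{ij}$. The analogous statement holds for $(k-1)$-forms.

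By bilinearity of both sides in $(\omega,\epsilon)$ it then suffices to check the identity for $\omega=e^I$ and $\epsilon=e^J$ with $I$ and $J$ strictly increasing multi-indices of lengths $k$ and $k-1$. For the left-hand side, $\dd f\wedge e^J=|\dd f(p)|\,e^1\wedge e^J$, which is $0$ when $1\in J$ and equals $|\dd f(p)|\,e^{\{1\}\cup J}$ when $1\notin J$ (no reordering sign occurs, since $1$ precedes every index in $J$); taking the inner product with $e^I$ therefore yields $|\dd f(p)|$ if $I=\{1\}\cup J$ and $0$ otherwise. For the right-hand side, $\iota(\dd f^*)e^I=|\dd f(p)|\,\iota(e_1)e^I$, which is $0$ when $1\notin I$ and equals $|\dd f(p)|\,e^{I\setminus\{1\}}$ when $1\in I$ (again no sign, as $1$ is then the smallest index of $I$); taking the inner product with $e^J$ yields $|\dd f(p)|$ precisely when $1\in I$ and $I\setminus\{1\}=J$, i.e. when $I=\{1\}\cup J$, and $0$ otherwise. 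The two expressions agree in every case, so the identity holds at $p$, and since $p$ was arbitrary it holds on all of $M$.

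There is no real obstacle in this argument; the only point that needs a little care is the orientation-sign bookkeeping and the convention for the musical isomorphism $\alpha\mapsto\alpha^*$ induced by $g$. Choosing the orthonormal basis so that $\dd f$ is a nonnegative multiple of $e^1$ makes all such signs disappear, which is why I would set the computation up that way rather than in an arbitrary coordinate chart; one can of course also argue directly in a chart using $\iota(\dd f^*)\omega=\sum_{i,j}g^{ij}(\partial_j f)\,\iota(\partial_i)\omega$ together with the formula for the inner product of forms, at the cost of more visible index manipulation.
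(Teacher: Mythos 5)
Your proof is correct. Both you and the paper verify the identity pointwise by observing that both sides are ($C^\infty$-)bilinear in $(\omega,\epsilon)$ and reducing to elementary basis forms; the underlying fact in each case is that interior multiplication by $\dd f^*$ is adjoint to exterior multiplication by $\dd f$. The executions differ slightly: the paper works in an arbitrary coordinate chart with $\omega=g\,\dd x^{i_1}\wedge\cdots\wedge\dd x^{i_k}$ and $\epsilon=h\,\dd x^{j_1}\wedge\cdots\wedge\dd x^{j_{k-1}}$, and obtains the identity in one stroke by expanding the Gram determinant $\omega\cdot(\dd f\wedge\epsilon)$ along the row corresponding to $\dd f$, which directly produces the sum $\sum_r(-1)^{r+1}(\dd x^{i_r}\cdot\dd f)\,(\cdots\widehat{\dd x^{i_r}}\cdots)\cdot\epsilon$ defining $\iota(\dd f^*)\omega\cdot\epsilon$. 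You instead choose an orthonormal coframe at $p$ adapted to $\dd f(p)$, which eliminates all signs and the metric coefficients and reduces the check to a clean case analysis on whether $1\in I$ and $1\in J$, at the modest cost of treating $\dd f(p)=0$ separately and invoking the existence of such a frame. Both arguments are complete; the paper's is marginally more economical (no case split, no special frame), while yours makes the adjointness mechanism more transparent.
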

\begin{proof}
Working in a coordinate chart $(U,x)$, by linearity it suffices to prove the 
claim for 
$\omega = g\dd x^{i_1} \wedge \cdots \wedge \dd x^{i_k}$ where $1 \leq i_1 < 
\cdots < i_k \leq n$ and $\epsilon = h\dd x^{j_1} \wedge \cdots \wedge \dd 
x^{j_{k-1}}$ where $1 \leq j_1 < \cdots < j_{k-1} \leq n$.
In that case we find 
\begin{align*}
\ & \omega\cdot(\dd f\wedge\epsilon)\\
&=	gh(\ud x^{i_1} \wedge \cdots \wedge \dd x^{i_k})\cdot(\ud f \wedge \dd 
x^{j_1} \wedge \cdots \wedge \dd x^{j_{k-1}})\\
&=	\sum_{r=1}^k (-1)^{r+1}gh(\ud x^{i_r}\cdot \dd f)(\ud x^{i_1} \wedge 
\cdots \wedge \widehat{\dd x^{i_r}}\wedge \cdots \wedge \dd x^{i_k})\cdot(\ud 
x^{j_1} \wedge \cdots \wedge \dd x^{j_{k-1}})\\
&= \iota(\dd f^*)\omega\cdot\epsilon.
\end{align*}
Here the third line follows by recalling that the inner product can be seen as 
the determinant of a matrix, and that we can develop this determinant to the 
row 
of $\dd f$. The last equality follows by simply expanding 
$\iota(\dd f^*)\omega$.
\end{proof}

\begin{lemma}\label{generaldiv}
If $\omega$ is a $k$-form, then
$$
\delta_{k-1}\omega = \dd_{k-1}^*\omega - \iota((\dd\log\rho)^*)\omega
$$
where $\ud(\log\rho)^*$ is the smooth vector field associated to the 1-form 
$\ud(\log\rho)$
by duality with respect to the Riemannian metric $g$.
\end{lemma}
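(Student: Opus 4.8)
The plan is to compute the adjoint $\delta_{k-1}$ of $\dd_{k-1}$ with respect to the weighted inner product $\inp{\cdot}{\cdot}_\rho$ by relating it to the already-understood adjoint $\dd_{k-1}^*$ taken with respect to the volume measure $\ud x$. First I would fix a compactly supported smooth $(k-1)$-form $\epsilon$ and a smooth $k$-form $\omega$ and write out $\inp{\dd_{k-1}\epsilon}{\omega}_\rho = \int_M (\dd\epsilon)\cdot\omega \, \rho \ud x = \int_M (\dd\epsilon)\cdot\omega \, \ud x + \int_M (\dd\epsilon)\cdot\omega\,(\rho-1)\ud x$; more efficiently, I would keep the weight intact and use the definition of $\dd_{k-1}^*$ on the form $\rho\,\omega$ (or rather exploit bilinearity carefully), so that $\int_M (\dd\epsilon)\cdot\omega \, \rho\ud x = \int_M \epsilon \cdot \dd_{k-1}^*(\rho\omega)\,\ud x$ is not quite what is wanted because $\rho\omega$ need not be smooth-friendly in the same way; instead the cleaner route is to write $\int_M (\dd\epsilon)\cdot(\rho\omega)\,\ud x = \int_M \epsilon\cdot \dd_{k-1}^*(\rho\omega)\,\ud x$ and then use a Leibniz-type rule for $\dd^*$ against multiplication by the scalar function $\rho$.

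The key identity I expect to need is the codifferential product rule: for a smooth function $\rho$ and a smooth $k$-form $\omega$,
\begin{align*}
\dd_{k-1}^*(\rho\omega) = \rho\,\dd_{k-1}^*\omega - \iota((\dd\rho)^*)\omega.
\end{align*}
This follows from the standard formula $\dd^*(f\omega) = f\dd^*\omega - \iota((\dd f)^*)\omega$, which in turn is dual to the Leibniz rule $\dd(f\eta) = \dd f\wedge\eta + f\dd\eta$ together with Lemma \ref{contraction}: pairing $\dd(f\eta)$ with $\omega$ and integrating, the term $\dd f\wedge\eta$ contributes $\inp{\iota((\dd f)^*)\omega}{\eta}$ by Lemma \ref{contraction}, while $f\,\dd\eta$ contributes $\inp{\dd^*\omega}{f\eta} = \inp{f\,\dd^*\omega}{\eta}$. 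Dividing by $\rho$ and substituting $\rho-1$... actually it is cleanest to apply this with $f = \rho$ directly: from $\int_M (\dd\epsilon)\cdot(\rho\omega)\,\ud x = \int_M \epsilon\cdot[\rho\,\dd_{k-1}^*\omega - \iota((\dd\rho)^*)\omega]\,\ud x$, and then dividing the integrand by $\rho$ to recover $\ud m$, we get $\inp{\dd_{k-1}\epsilon}{\omega}_\rho = \int_M \epsilon\cdot[\dd_{k-1}^*\omega - \rho^{-1}\iota((\dd\rho)^*)\omega]\,\ud m$. Finally I would identify $\rho^{-1}\dd\rho = \dd\log\rho$, so $\rho^{-1}(\dd\rho)^* = (\dd\log\rho)^*$ by $\RR$-linearity of the metric duality, yielding $\delta_{k-1}\omega = \dd_{k-1}^*\omega - \iota((\dd\log\rho)^*)\omega$ on smooth compactly supported forms; since both sides are closed operators agreeing on a core, they agree as operators.

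The main obstacle is making the integration-by-parts step fully rigorous: I must justify that the boundary terms vanish, which is where compact support of $\epsilon$ (and completeness of $M$, ensuring no boundary at infinity intervenes) is used, and I must be careful about which forms are assumed smooth versus merely $L^2$ — the identity is first proved for $\omega, \epsilon \in C_{\rm c}^\infty$ and then extended by density, using that $\dd_{k-1}^*$ maps $C_{\rm c}^\infty(\Lambda^kTM)$ into $C_{\rm c}^\infty(\Lambda^{k-1}TM)$ and that the zero-order term $\iota((\dd\log\rho)^*)$ is a bounded pointwise operation on any compact set. A secondary, purely bookkeeping point is the sign and index conventions in the Leibniz rule for $\dd^*$, which should be pinned down by a one-line local-coordinate check mirroring the computation in Lemma \ref{contraction}.
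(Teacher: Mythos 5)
Your proposal is correct and follows essentially the same route as the paper: both reduce the weighted adjoint to the unweighted one by an integration by parts in which the Leibniz rule for $\dd$ produces the extra term $\dd\rho\wedge\epsilon$, which is converted to the contraction $\iota((\dd\rho)^*)\omega$ via Lemma \ref{contraction}, and then $\rho^{-1}(\dd\rho)^*=(\dd\log\rho)^*$ finishes the argument. The only cosmetic difference is that the paper attaches $\rho$ to the test form $\epsilon$ and expands $\dd_{k-1}(\rho\epsilon)$ directly, whereas you package the same computation as a product rule for $\dd^*$ applied to $\rho\omega$.
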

\begin{proof}
Suppose that $\omega$ is a $k$-form. For any $(k-1)$-form $\epsilon$ we have
$$
\begin{aligned}
\inp{\epsilon}{\ud_{k-1}^* \omega - \iota((\dd\log\rho)^*)\omega)}_\rho 
   & = \inp{\rho\epsilon}{\ud_{k-1}^*\omega} - 
\inp{\rho\epsilon}{\iota((\dd\log\rho)^*)\omega}
\\ & = \inp{\dd_{k-1}(\rho\epsilon)}{\omega} - 
\inp{\epsilon}{\iota(\rho(\dd\log\rho)^*)\omega}
\\ & = \inp{\rho\dd_{k-1}\epsilon + \dd\rho\wedge\epsilon}{\omega} - 
\inp{\epsilon}{\iota((\dd\rho)^*)\omega}
\\ & = \inp{\dd_{k-1}\epsilon}{\omega}_\rho
\end{aligned}
$$
where we used that $k$-forms are linear over $C^\infty$ functions to arrive at 
the second line. The last equality follows from 
the previous lemma. The claim now follows.
\end{proof}

\begin{definition}[Witten Laplacian] The {\em Witten Laplacian on $k$-forms}
associated with $\rho$ is the operator $L_k$
 defined on $C_{\rm c}^\infty(\Lambda^kTM)$ as 
$$
L_k := \ud_{k-1}\delta_{k-1} + \delta_k\ud_k.
$$
\end{definition}
In the special case that $\rho \equiv 1$, we recover the Hodge-de Rham 
Laplacian 
 $$\Delta_k = \ud_{k-1}\delta_{k-1} + \delta_k\ud_k.$$
Using Lemma \ref{generaldiv} for 1-forms, we obtain the following identity for 
the Witten Laplacian on functions:
$$
L_0 = \ud_0^*\ud_0 - \iota((\ud\log\rho)^*)\ud_0 = \Delta_0 - 
\ud\log\rho\cdot\ud_0 =  \Delta_0  - \nabla \log\rho\cdot \nabla 
$$
where the second identity follows by duality via the Riemannian inner product.
The {\em Bochner-Lichn\'erowicz-Weitzenb\"ock 
formula}  (cf. \cite[Section 5]{Bakry}) asserts that
\begin{equation}\label{eq:BLW}
\frac12\Delta_0|\omega|^2 = \omega\cdot\Delta_k\omega - |\Nabla\omega|^2 - 
\widetilde Q_k(\omega,\omega),
\end{equation}
where $\widetilde Q_k$ is a quadratic form which depends on the Ricci curvature tensor 
(see \cite[Section 5]{Bakry}). Notice that in \cite{Bakry} there is an 
additional term $\frac{1}{k!}$, which comes from the fact that we define 
$|\Nabla\omega|^2$ in a similar way as for $k$-forms, while \cite{Bakry} defines it in 
the sense of tensors. 

An analogue of \eqref{eq:BLW} may be derived for the Witten Laplacian as follows. 
Firstly, if we expand the above definitions using Lemma \ref{generaldiv}, we 
can express $L_k$ in terms of $\Delta_k$:
\begin{align}\label{eq:Lk-expanded}
L_k = \Delta_k - \dd_k(\iota((\dd\log\rho)^*)\omega) - 
\iota((\dd\log\rho)^*) \ud_k\omega.
\end{align}
Obviously, when $k = 0$ the second term on the right-hand side vanishes, while for $k = n$ the last 
term vanishes.
Inserting \eqref{eq:Lk-expanded} into equation \eqref{eq:BLW} we obtain the following variant 
of the Bochner-Lichn\'erowicz-Weitzenb\"ock formula:
\begin{equation}\label{eq:BLW2}
\frac12L_0|\omega|^2 = \omega\cdot L_k\omega - |\Nabla\omega|^2 - 
Q_k(\omega,\omega) ,
\end{equation}
where 
\begin{equation}\label{eq:Qk}
Q_k(\omega,\omega) = \widetilde Q_k(\omega,\omega) + 
\frac12\dd|\omega|^2\cdot\dd\log\rho - 
\omega\cdot\dd(\iota((\dd\log\rho)^*)\omega) - 
\omega\cdot\iota((\dd\log\rho)^*) \ud\omega.
\end{equation}
As $\widetilde Q_k$ only depends on the Ricci curvature tensor, we see that $Q_k$ 
only depends on the Ricci curvature tensor and the positive function $\rho$. 
One has $Q_0 = 0$, while for $k = 1$ one has $Q_1(\omega,\omega) = 
\Ric(\omega^*,\omega^*) - \Nabla\Nabla\log\rho(\omega^*,\omega^*)$ (see 
\cite{Bakry}). The latter is usually referred to as the {\em Bakry--Emery Ricci curvature}.
In what follows, we will refer to $Q_k$ as the {\em Bakry--Emery Ricci curvature on $k$-forms}.

\subsection{The main hypothesis}

We are now ready to state the key assumption, which is a special case of the 
one in Bakry \cite{Bakry}:

\begin{hypothesis}[Non-negative curvature condition]\label{curvaturecondition}
For all  $k = 1,\ldots,n$ the Bakry--Emery Ricci curvature on $k$-forms is non-negative, i.e., we have $Q_k(\omega,\omega) \geq 0$ 
for all $k$-forms $\omega$.  
\end{hypothesis}

We assume non-negativity of the Bakry--Emery Ricci curvature, rather than its boundedness from below (as done in \cite{Bakry}), as in the case of (negative) lower bounds one obtains inhomogeneous Riesz estimates only (see \cite[Theorem 4.1,5.1]{Bakry}). Also note (see \cite{Bakry})
that to obtain boundedness of the Riesz transform on $k$-forms, not only does one need non-negativity of $Q_k$, but also of $Q_{k-1}$ and $Q_{k+1}$.\\

As an example, we will show what this assumption means in the case of 
$M=\RR^n$. The result of our computation is likely to be known, but 
for the reader's convenience we provide the details of the computation. 
Note that the case $k=1$ is much easier due to the simple coordinate free expression for 
the Bakry--Emery Ricci curvature $Q_1$.
In particular,  we will see that this assumption is satisfied in the case of the 
Ornstein-Uhlenbeck operator on $\RR^n$.

\begin{example}
Let $M = \RR^n$ with its usual Euclidean metric and consider a smooth strictly 
positive function $\rho$ on $\RR^n$. Let $k \in \{1,2\ldots,n\}$. We will 
derive a sufficient on $\rho$ in order that $Q_k(\omega,\omega) \geq 0$ for all 
$k$-forms $\omega$. 

Since $\RR^n$ has zero curvature, $\widetilde 
Q_k(\omega,\omega) = 0$ for all $k$-forms $\omega$. Focussing on the remaining terms
in \eqref{eq:Qk}, we will first 
show that $Q_k$ has the `Pythagorean' property described in \eqref{eq:pyth} below. Suppose 
$$\omega = \omega^{(1)}+\dots+\omega^{(N)},$$ where each $\omega^{(j)}$
is of the form $f^{(j)} \dd x^{i_1^{(j)}} \wedge 
\cdots \wedge \dd x^{i_k^{(j)}}$ with $1\le i_1^{(j)} < \dots< i_k^{(j)}\le n$,
and write $I^{(j)} = \{i_1^{(j)}, \dots, i_k^{(j)}\}$. If the index sets
$I^{(1)}, \dots, I^{(N)}$ are all different, then 
\begin{equation}\label{eq:pyth} Q_k(\omega,\omega) = Q_k(\omega_1,\omega_1)+\dots+ Q_k(\omega_N,\omega_N) .
\end{equation}
To keep notations simple we will prove \eqref{eq:pyth} for the case $N=2$; the reader will
have no difficulty in generalising the argument to general $N$.

So let us take $k$-forms $\omega_1 = f \dd x^{i_1} \wedge \cdots 
\wedge \dd x^{i_k}$, where $1 \leq i_1 < \cdots < i_k \leq n$ and $\omega_2 = 
g\dd x^{j_1} \wedge \cdots \wedge \dd x^{j_k}$, where $1 \leq j_1 < \cdots < j_k 
\leq n$ and suppose that $(i_1,\ldots,i_k) \neq (j_1,\ldots,j_k)$. Now consider 
$\omega = \omega_1 + \omega_2$. Since the set of `elementary' $k$-forms $$\{\dd x^{i_1} \wedge \cdots \wedge \dd 
x^{i_k}:\, 1\leq i_1 < \cdots < i_k \leq n\}$$ is an orthogonal basis for 
$\Lambda^kT\RR^n$ we have $|\omega|^2 = |\omega_1|^2 + |\omega_2|^2$ and 
consequently, $$\dd|\omega|^2\cdot\dd(\log\rho) = 
\dd|\omega_1|^2\cdot\dd(\log\rho) + \dd|\omega_2|^2\cdot\dd(\log\rho).$$ 
Furthermore, for any smooth vector field $X$,
$$
\omega\cdot\ud(\iota(X)\omega) = \omega_1\cdot\dd(\iota(X)\omega_1) + 
\omega_2\cdot\dd(\iota(X)\omega_2) + \omega_1\cdot\dd(\iota(X)\omega_2) + 
\omega_2\cdot\dd(\iota(X)\omega_2)
$$
and 
$$
\omega \cdot \iota(X) \ud\omega = \omega_1 \cdot \iota(X) \ud\omega_1 + \omega_2 
\cdot \iota(X) \ud\omega_2 + \omega_1 \cdot \iota(X) \ud\omega_2 + \omega_2 \cdot 
\iota(X) \ud\omega_1.
$$ 
Now
\begin{align*}
\iota(X) \ud\omega_1 
&=
\sum_{i=1}^n \partial_if\dd x^i(X) \ud x^{i_1} \wedge \cdots \wedge \dd 
x^{i_k}\\
&\qquad +
\sum_{i=1}^n\sum_{l=1}^k (-1)^l\partial_if \dd x^{i_l}(X) \ud x^{i_1} \wedge 
\cdots \wedge \widehat{\dd x^{i_l}} \wedge \cdots \wedge \dd x^{i_k}
\end{align*}
and
\begin{align*}
\dd(\iota(X)\omega_1) 
&=
-\sum_{l=1}^k (-1)^l\partial_if \dd x^{i_l}(X) \ud x^{i_1} \wedge \cdots \wedge 
\widehat{\dd x^{i_l}} \wedge \cdots \wedge \dd x^{i_k}.
\end{align*}
Consequently,
$$
\iota(X) \ud\omega_1 + \dd(\iota(X)\omega_1)  = \sum_{i=1}^n \partial_if\dd 
x^i(X) \ud x^{i_1} \wedge \cdots \wedge \dd x^{i_k}.
$$
By orthogonality we thus obtain that
\begin{align*}
&\omega_2\cdot\dd(\iota((\dd\log\rho)^*)\omega_1) + \omega_2 \cdot 
\iota((\dd\log\rho)^*) \ud\omega_1\\
&=
\omega_2\cdot(\ud(\iota((\dd\log\rho)^*)\omega_1) + 
\iota((\dd\log\rho)^*) \ud\omega_1)\\
&=
\sum_{i=1}^n g\partial_if \dd x^i((\dd \log\rho)^*)(\ud x^{i_1} \wedge \cdots 
\wedge \dd x^{i_k})\cdot(\ud x^{j_1} \wedge \cdots \wedge \dd x^{j_k}) = 0.
\end{align*}
Obviously, the same holds if we interchange $\omega_1$ and $\omega_2$. Putting everything together, we obtain $Q_k(\omega,\omega) = 
Q_k(\omega_1,\omega_1) + Q_k(\omega_2,\omega_2)$.
This concludes the proof of (the case $N=2$ of) \eqref{eq:pyth}.

Now consider a $k$-form $\omega$ of the form $f \dd x^{i_1} \wedge \cdots 
\wedge \dd x^{i_k}$ with $1 \leq i_1 < 
\cdots < i_k \leq n$. To simplify notations a bit we shall suppose that $(i_1,\ldots,i_k) 
= (1,\ldots,k)$.
We compute the three last terms on the right-hand side of \eqref{eq:Qk}. 

As to the first term, from $|\omega|^2 = f^2$ we obtain 
$$\frac12\dd|\omega|^2\cdot\dd(\log\rho) = \sum_{i=1}^n f\partial_i 
f\partial_i(\log\rho).$$

Turning to the second term,
\begin{align*}
\iota((\dd\log\rho)^*) \ud\omega
&=
\sum_{j=1}^n ((\dd\log\rho)^*)^j\iota(\partial_j) \ud\omega\\
&=
\sum_{j=1}^n \sum_{i=k+1}^n  \partial_i f 
\partial_j(\log\rho)\iota(\partial_j) \ud x^i \wedge \dd x^1 \wedge \cdots 
\wedge \dd x^k\\
&=
\sum_{i=k+1}^n  \partial_i f \partial_i(\log\rho) \ud x^1 \wedge \cdots \wedge 
\dd x^k\\
&\qquad +
\sum_{i=k+1}^n \sum_{j=1}^k (-1)^j \partial_i f \partial_j(\log\rho)  \ud x^i 
\wedge \dd x^1 \wedge \cdots \wedge \widehat{\dd x^j} \wedge \cdots \wedge \dd 
x^k.
\end{align*}
Hence
$$
\omega\cdot \iota((\dd\log\rho)^*) \ud\omega = \sum_{i=k+1}^n  f\partial_i f 
\partial_i(\log\rho).
$$

Computing the final term, we have 
\begin{align*}
\iota((\dd\log\rho)^*)\omega
&=
f\sum_{j=1}^n ((\dd\log\rho)^*)_j\iota(\partial_j) \ud x^1 \wedge \cdots \wedge 
\dd x^k\\
&=
f\sum_{j=1}^k (-1)^j \partial_j(\log\rho) \ud x^1 \wedge \cdots \wedge \widehat{\dd x^j} \wedge 
\cdots \wedge \dd x^k.
\end{align*}
From this it follows that
\begin{align*}
\dd(\iota((\dd\log\rho)^*)\omega)
&=
\sum_{j=1}^k \sum_{i=1}^n (-1)^j\partial_i(f\partial_j(\log\rho)) \ud x^i 
\wedge \dd x^1 \wedge \cdots \wedge \widehat{\dd x^j} \wedge \cdots \wedge \dd 
x^k\\
&=
\sum_{j=1}^k \sum_{i=1}^n (-1)^j\partial_if \partial_j(\log\rho) \ud x^i \wedge 
\dd x^1 \wedge \cdots \wedge \widehat{\dd x^j} \wedge \cdots \wedge \dd x^k\\
&\qquad +
\sum_{j=1}^k \sum_{i=1}^n (-1)^jf\partial_i\partial_j(\log\rho) \ud x^i \wedge 
\dd x^1 \wedge \cdots \wedge \widehat{\dd x^j} \wedge \cdots \wedge \dd x^k.
\end{align*}
Noting that only the terms with $i = j$ can contribute a non-zero contribution to the inner 
product with $\omega$, we obtain
$$
\omega \cdot \dd(\iota((\dd\log\rho)^*)\omega) = \sum_{i=1}^k 
f\partial_if\partial_i(\log\rho) + f^2\partial_i^2(\log\rho).
$$
Collecting everything, we find that
\begin{align*}
Q_k(\omega,\omega)
&= 
\widetilde Q_k(\omega,\omega) + \frac12\dd|\omega|^2\cdot\dd\log\rho - 
\omega\cdot\dd(\iota((\dd\log\rho)^*))\omega - 
\omega\cdot\iota((\dd\log\rho)^*) \ud\omega\\
&= 
-f^2\sum_{i=1}^k \partial_i^2(\log \rho).
\end{align*}

We thus see that $Q_k(\omega,\omega) \geq 0$ precisely when $\sum_{i=1}^k 
\partial_i^2(\log\rho) \leq 0$.
Recalling the simplification for notational purposes, we conclude that 
$Q_k(\omega,\omega) \geq 0$ for all $k$-forms $\omega$ precisely if for all $1 
\leq i_1 < \cdots < i_k \leq n$ it holds that
$$
\sum_{r=1}^k \partial_{i_r}^2(\log\rho) \leq 0.
$$

In the special case $\rho(x) = e^{-\frac12|x|^2}$ which corresponds to the 
Ornstein-Uhlenbeck operator this condition is clearly satisfied. Indeed, for 
any $j =1,\ldots,n$ we have $\partial_j^2(\log\rho) = -1$.  
\end{example}

We can use the previous example to consider a more general situation.

\begin{example}
Let $(M,g)$ be a complete Riemannian manifold. Suppose the quadratic form $\tilde Q_k$ depending solely on the Ricci curvature is bounded from below for all $1\le k\le n$, i.e., there exist constants $a_1,\ldots,a_n$ such that for 
all $k$-forms $\omega$ we have $$\widetilde Q_k(\omega,\omega) 
\geq a_k|\omega|^2.$$ Fix $k \in \{1,\ldots,n\}$. In 
normal coordinates around a point $p\in M$, the expression for $Q_k(\omega,\omega)$ at $p$ 
reduces to the one of the previous example.  
Consequently, $Q_k(\omega,\omega) \geq 0$ for any $k$-form $\omega$ if for any $p \in M$ and any $1 \leq i_1 < \cdots < i_k \leq n$ one has 
$\sum_{r=1}^k \partial_{i_r}^2(\log\rho)(p) \leq a_k$, where the last 
expression is in normal coordinates around $p$. 
\end{example}

\subsection{The heat semigroup generated by $-L_k$}

We return to the general setting described at the beginning of this section.
For each $k=0,1,\dots,n$ the operator 
$L_k$ is essentially self-adjoint on 
$L^2(\Lambda^kTM,m)$ (see \cite{Bakry, Strichartz} for the case $\rho\equiv 1$ 
and \cite{Yoshida})
and satisfies $\langle L_k\omega,\omega\rangle_\rho = |\dd_k \omega|^2+ 
|\delta_{k-1}\omega|^2\ge 0$ for all smooth $k$-forms $\omega$. 
Consequently, its closure is a 
self-adjoint operator on $L^2(\Lambda^kTM,m)$. With slight abuse of notation 
we shall denote this closure by $L_k$ again. 
By the spectral theorem,
$-L_k$ generates a strongly continuous contraction
semigroup $$P_t^k := e^{-tL_k}, \quad t\ge 0,$$ on $L^2(\Lambda^kTM,m)$. 

From now on we assume that Hypothesis \ref{curvaturecondition} 
is satisfied. As was shown in \cite{Bakry, Yoshida}, under this assumption 
the restriction of $(P_t^k)_{t\ge 0}$ to 
$L^p(\Lambda^kTM,m)\cap L^2(\Lambda^kTM,m)$ extends to a strongly 
continuous contraction semigroup on $L^p(\Lambda^kTM,m)$ for any $p \in 
[1,\infty)$. These extensions are consistent, i.e., 
the semigroups $(P_t^k)_{t\ge 0}$ on $L^{p_i}(\Lambda^kTM,m)$, $i=1,2$, agree on the  
intersection $L^{p_1}(\Lambda^kTM,m) \cap L^{p_2}(\Lambda^kTM,m)$.

The infinitesimal generator of the semigroup $(P_t^k)_{t\ge 0}$ in $L^p(\Lambda^kTM,m)$ 
will be denoted (with slight abuse of notation) 
by $-L_k$ and its domain by ${\mathsf D}_p(L_k)$. 

As an operator acting in $L^2(\Lambda^kTM,m)$, $L_k$ is the closure of an 
operator defined a priori on
$C_{\rm c}^\infty(\Lambda^kTM)$ and therefore the inclusion 
$C_{\rm c}^\infty(\Lambda^kTM)\subseteq {\mathsf D}_2(L_k)$ trivially holds.
The definition of the domain ${\mathsf D}_p(L_k)$ is indirect, however, and based on the 
fact that 
$L_k$ generates a strongly continuous semigroup on $L^p(\Lambda^kTM,m)$. 
Nevertheless we have:

\begin{lemma}\label{Cc}
$C_{\rm c}^\infty(\Lambda^kTM)$ is contained in ${\mathsf D}_p(L_k)$ for all $1 
< p < \infty$.
\end{lemma}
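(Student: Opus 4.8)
The plan is to exploit consistency of the semigroups on the scale of $L^p$-spaces together with the fact that $C_{\rm c}^\infty(\Lambda^kTM) \subseteq \D_2(L_k)$ and that $L_k$ acts on such forms by the explicit differential expression $\ud_{k-1}\delta_{k-1} + \delta_k\ud_k$. The key point is that for a fixed $\omega\in C_{\rm c}^\infty(\Lambda^kTM)$, the form $L_k\omega$ (computed pointwise via the differential operator) is again a \emph{compactly supported smooth} $k$-form, hence lies in every $L^p(\Lambda^kTM,m)$; one then shows that the abstract generator in $L^p$ agrees with this pointwise action.

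First I would fix $1<p<\infty$ and $\omega \in C_{\rm c}^\infty(\Lambda^kTM)$, and set $\eta := (\ud_{k-1}\delta_{k-1}+\delta_k\ud_k)\omega$, which is an element of $C_{\rm c}^\infty(\Lambda^kTM)$ since the operators $\ud_j$ and $\delta_j$ map compactly supported smooth forms to compactly supported smooth forms (the latter was noted in the text just after the definition of $\delta_k$). Thus $\eta \in L^p(\Lambda^kTM,m)$. Next, since $\omega \in \D_2(L_k)$ with $L_k\omega = \eta$, we have the resolvent identity $\omega = (\lambda + L_k)^{-1}(\lambda\omega + \eta)$ in $L^2$ for, say, $\lambda = 1$, where the resolvent is the $L^2$-resolvent. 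Because $\omega$, $\eta$ are compactly supported and smooth, $\lambda\omega+\eta \in L^2\cap L^p$; by the consistency of the semigroups $(P_t^k)_{t\ge0}$ on $L^2$ and $L^p$ (stated in the text), the corresponding resolvents $(\lambda+L_k)^{-1}$ are consistent on $L^2\cap L^p$. Hence $(\lambda+L_k)^{-1}(\lambda\omega+\eta)$, computed in $L^p$, equals $\omega$. This shows $\omega \in \Ran((\lambda+L_k)^{-1}) = \D_p(L_k)$ and moreover $L_k\omega = \eta$ in the $L^p$-sense.

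I would phrase the consistency of resolvents as the routine step: $(\lambda+L_k)^{-1} = \int_0^\infty e^{-\lambda t} P_t^k \ud t$ as a Bochner integral, valid in both $L^2$ and $L^p$, and the $P_t^k$ agree on $L^2\cap L^p$ by hypothesis, so the integrals agree on $L^2\cap L^p$ (one may need to check the integral converges absolutely in $L^p$, which follows from contractivity of $P_t^k$ on $L^p$ and $\lambda>0$). The only mild subtlety — and the step I expect to be the main obstacle, though it is not deep — is making sure that $\lambda\omega+\eta$ genuinely lies in $L^2 \cap L^p$ and that one is allowed to identify the two resolvents on it: this hinges precisely on the fact established earlier that the $L^2$- and $L^p$-semigroups are consistent extensions of a common semigroup on $L^2\cap L^p$. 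Once that is in place the conclusion $C_{\rm c}^\infty(\Lambda^kTM) \subseteq \D_p(L_k)$ is immediate, and as a byproduct one gets that $L_k$ acts on $C_{\rm c}^\infty(\Lambda^kTM)$ by the same differential expression in every $L^p$, $1<p<\infty$.
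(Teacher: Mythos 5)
Your argument is correct, but it takes a different route from the paper's. The paper fixes $\omega\in C_{\rm c}^\infty(\Lambda^kTM)$ and invokes the reflexivity criterion from semigroup theory (Butzer--Berens): in a reflexive space, $x\in\D(A)$ as soon as $\limsup_{t\downarrow 0}\frac1t\Vert P_t x - x\Vert<\infty$. It then writes $\frac1t(P_t^k\omega-\omega)=-\frac1t\int_0^t P_s^k L_k\omega\,\dd s$, observes that $L_k\omega\in C_{\rm c}^\infty(\Lambda^kTM)$ so the right-hand side makes sense as a Bochner integral in $L^p$, and bounds the difference quotient by $\Vert L_k\omega\Vert_p$ using contractivity. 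You instead pass through the resolvent: from $\omega=(\lambda+L_k)^{-1}(\lambda\omega+\eta)$ in $L^2$ with $\eta=L_k\omega\in C_{\rm c}^\infty(\Lambda^kTM)$, consistency of the semigroups yields consistency of the resolvents on $L^2\cap L^p$ via the Laplace transform, whence $\omega\in\Ran\bigl((\lambda+L_k)^{-1}\bigr)=\D_p(L_k)$. Both arguments rest on exactly the same two ingredients (consistency of $(P_t^k)_{t\ge0}$ across the $L^p$-scale and the fact that $\ud$ and $\delta$ preserve $C_{\rm c}^\infty(\Lambda TM)$), and both are complete modulo the routine identification of a Bochner integral computed in two different $L^p$-norms, which you correctly flag. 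Your version has the mild advantages of not needing reflexivity and of delivering explicitly that $L_k\omega=(\ud_{k-1}\delta_{k-1}+\delta_k\ud_k)\omega$ in the $L^p$-sense; the paper's version avoids any discussion of resolvent consistency and stays entirely at the level of the semigroup.
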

\begin{proof} We follow the idea of \cite[Lemma 4.8]{MN}. 
Pick an arbitrary  $k$-form $\omega \in C_{\rm c}^\infty(\Lambda^kTM,m)$. Then $\omega \in 
{\mathsf D}_2(L_k)$ (by definition of $L_k$ on $L^2(\Lambda^kTM,m)$) and also 
$\omega\in L^p(\Lambda^kTM,m)$. 
Since $L^p(\Lambda^kTM,m)$ is a reflexive Banach space, a standard result in semigroup theory 
 states that in order to 
show that $\omega \in {\mathsf D}_p(L_k)$ it suffices to show that 
$$\limsup_{t\downarrow 
0} \frac1t\Vert P_t^k\omega - \omega\Vert _p < \infty$$ (see, e.g., \cite{BB}). Note that
$
\frac1t(P_t^k\omega - \omega) = -\frac1t\int_0^t P_s^kL_k\omega\ud s
$
in $L^2(\Lambda^kTM,m)$. However, since $L_k\omega \in C_{\rm 
c}^\infty(\Lambda^kTM)$ (as 
both $\ud$ and $\delta$
map $C_{\rm c}^\infty(\Lambda TM)$ to $C_{\rm 
c}^\infty(\Lambda TM)$), 
we can interpret the integral on the right-hand side as a 
Bochner integral in the Banach space $L^p(\Lambda^kTM,m)$
(see \cite[Chapter 1]{HNVW1}). 
Consequently we may estimate
$$
\begin{aligned}
\frac1t\Vert P_t^k\omega - \omega\Vert _p 	&\leq	
\frac1t\int_0^t\Vert P_s^kL_k\omega\Vert _p\ud s \leq
\frac1t\int_0^t\Vert L_k\omega\Vert _p\ud s = \Vert L_k\omega\Vert _p.
\end{aligned}
$$
But then $\limsup_{t\downarrow 0} \frac1t\Vert P_t^k\omega - \omega\Vert _p 
\leq 
\Vert L_k\omega\Vert _p < \infty$. This proves the claim.
\end{proof}

By the Stein interpolation theorem 
\cite[Theorem 1 on p.67]{Stein}, for $p\in (1,\infty)$ and $k=0,1,\dots,n$
the mapping $t \mapsto P_t^k$ extends analytically to a strongly continuous  
$\Ll(L^p(\Lambda^kTM,m))$-valued mapping $z \mapsto P_z^k$ defined on the 
sector $\Sigma_{\omega_p}$ with $\omega_p = \frac{\pi}{2}(1 - |2/p - 1|)$.
On this sector the operators $P_z^k$ are contractive.
This implies that $L_k$ is sectorial of angle $\omega_p$. 

As explained in \cite[p. 625]{Yoshida} it follows from the general theory of 
Dirichlet forms \cite{Fukushima} that there exists a Markov process
 $(X_t)_{t\geq0}$ such that 
\begin{align}\label{eq:Markov}
P_t^0f(x) = \EE^x(f(X_t))
\end{align}
for all $f \in C_{\rm c}^\infty(M)$. Here $\EE^x$ denotes expectation under the 
law of the process $(X_t)_{t\geq0}$ starting almost surely in $x \in M$. 
Using this together with Hypothesis \ref{curvaturecondition} 
(this corresponds to the assumption made in \cite[eq. (1.2)]{Yoshida}, 
see the explanation preceding the proof of theorem \ref{thm:HinftyLk}), it is 
then shown in \cite[Proposition 2.3]{Yoshida} that there exists a Markov process
$(V_t)_{t\geq0}$ such that $$P_t^k\omega(v) = \EE^v(\omega(V_t))$$ for 
all 
$\omega \in C_{\rm c}^\infty(\Lambda^kTM)$. Here, $\EE^v$ denotes expectation 
under 
the law of the process $(V_t)_{t\geq0}$ starting almost surely in $v\in M$. 

As a consequence of \eqref{eq:Markov} the operators 
$P_t^0$ are positive, in the sense that they send non-negative functions to 
non-negative functions. This, together with the following lemma, allows us to 
show that $L_k$ is in fact $R$-sectorial of angle $< \frac12\pi$.

\begin{lemma}[$R$-sectoriality via pointwise domination]\label{lem:Domination}
Let $M$ be a Riemannian manifold of dimension $n$ equipped with a measure 
$m$. Let $k \in \{0,1\ldots,n\}$ and suppose $A$ and $B$ are sectorial 
operators of angle $<\frac12\pi$ 
on the space $L^p(M,m)$ and $L^p(\Lambda^kTM,m)$ respectively, with $1\le 
p<\infty$. 
Suppose the bounded analytic $C_0$-semigroups $(S_t)_{t\ge 0}$ and $(T_t)_{t\ge 0}$ generated by $-A$ 
and $-B$ satisfy the pointwise
bound $$ |T_t \omega| \le C S_t|\omega|$$ for all $\omega\in 
L^p(\Lambda^kTM,m)$ and $t\ge 0,$
where $C$ is a constant. If the set
$\{(I+sA)^{-1}:\, s>0\}$ is $R$-bounded (in particular, 
if $A$ is $R$-sectorial), then $B$ is $R$-sectorial of angle $<\frac12\pi$.
\end{lemma}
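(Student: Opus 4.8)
The plan is to deduce $R$-sectoriality of $B$ from the pointwise domination by invoking an abstract transference principle: a positive $R$-bounded family of operators on $L^p(M,m)$ dominates, in the lattice sense, an $R$-bounded family on the bundle space $L^p(\Lambda^kTM,m)$. Concretely, I would first reduce the claim about the resolvents of $B$ to a claim about the semigroup $(T_t)_{t\ge0}$, using the standard representation of the resolvent as a Laplace transform,
\begin{equation*}
(I+sB)^{-1} = \frac{1}{s}\int_0^\infty e^{-t/s} T_t \ud t, \qquad s>0,
\end{equation*}
and likewise for $A$. The pointwise bound $|T_t\omega|\le C S_t|\omega|$ then integrates to the pointwise bound
\begin{equation*}
\bigl|(I+sB)^{-1}\omega\bigr| \le \frac{C}{s}\int_0^\infty e^{-t/s} S_t|\omega| \ud t = C\,(I+sA)^{-1}|\omega|,
\end{equation*}
valid for all $s>0$ and all $\omega$; more generally, for $\lambda$ in a sector of half-angle $<\frac12\pi$ one writes $(I+\lambda B)^{-1}$ as a contour/Laplace integral of $T_t$ along a suitable ray and obtains $|(I+\lambda B)^{-1}\omega|\le C_\vartheta\, R_\lambda |\omega|$ where $R_\lambda$ is a corresponding positive operator built from $(S_t)$ whose family over the sector is $R$-bounded because $\{(I+sA)^{-1}:s>0\}$ is.

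The second step is the lattice/Kahane argument that converts the pointwise domination into $R$-boundedness. Fix $\omega_1,\dots,\omega_N\in L^p(\Lambda^kTM,m)$ and $s_1,\dots,s_N>0$, and set $T_m := (I+s_mB)^{-1}$, $R_m := (I+s_mA)^{-1}$. Using the pointwise bound together with the fact that the $R_m$ are positive operators, one estimates $\bigl|\sum_m r_m T_m\omega_m\bigr| \le \sum_m |T_m\omega_m| \le C\sum_m R_m|\omega_m|$ pointwise; but this crude triangle-inequality bound loses the cancellation. The correct route is the square-function/Khintchine argument: in $L^p$ the Rademacher average $\E\|\sum_m r_m x_m\|_p^p$ is equivalent to $\|(\sum_m |x_m|^2)^{1/2}\|_p^p$ (Khintchine's inequality applied pointwise), so $R$-boundedness of $\{T_m\}$ amounts to the estimate $\bigl\|(\sum_m |T_m\omega_m|^2)^{1/2}\bigr\|_p \lesssim \bigl\|(\sum_m |\omega_m|^2)^{1/2}\bigr\|_p$. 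Now $|T_m\omega_m|\le C\,R_m|\omega_m|$ pointwise gives $(\sum_m |T_m\omega_m|^2)^{1/2}\le C(\sum_m (R_m|\omega_m|)^2)^{1/2}$ pointwise, and the right-hand side is controlled by $R$-boundedness of $\{R_m\}$ on the scalar space $L^p(M,m)$ — which by Khintchine again is exactly the hypothesised $R$-boundedness of $\{(I+sA)^{-1}:s>0\}$ — applied to the functions $|\omega_m|\in L^p(M,m)$. Chaining these inequalities yields $\bigl\|(\sum_m|T_m\omega_m|^2)^{1/2}\bigr\|_p \lesssim \bigl\|(\sum_m |\omega_m|^2)^{1/2}\bigr\|_p = \bigl\|(\sum_m\bigl||\omega_m|\bigr|^2)^{1/2}\bigr\|_p$, i.e.\ the desired $R$-bound, with constant independent of $N$ and of the $s_m$.

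For the full conclusion one needs the $R$-bound not just for the real resolvents $\{(I+sB)^{-1}:s>0\}$ but for $\{\lambda(\lambda-B)^{-1}:\lambda\notin\overline{\Sigma_\vartheta^+}\}$ for some $\vartheta<\frac12\pi$; this is where the hypothesis that $A$ and $B$ are \emph{sectorial of angle $<\frac12\pi$}, so that $(S_t)$ and $(T_t)$ are bounded analytic semigroups, is used. One represents $\lambda(\lambda-B)^{-1}$ for $\lambda$ in a sector around the negative real axis (half-angle $\vartheta'$ slightly larger than $\omega^+(B)$) as an absolutely convergent integral of $T_z$ along a ray $z=e^{i\phi}t$, $t>0$, inside the analyticity sector, with $|T_z\omega|\le C_\phi S_{\Re z\text{-type majorant}}|\omega|$ — more precisely one uses that $(S_t)$ likewise extends analytically and dominates $(T_z)$ on a common subsector (this can be bootstrapped from the $t>0$ domination together with analyticity, or simply taken from the way these semigroups arise); then the same Khintchine/lattice argument as above, now applied to the dominating analytic-semigroup integrals, produces $R$-boundedness of the resolvent family of $B$ on each such subsector, hence $R$-sectoriality of $B$ of angle $<\frac12\pi$. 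The main obstacle is precisely this last point: upgrading the pointwise domination from the real semigroup parameter $t\ge0$ to complex $z$ (equivalently, to the resolvent on a genuine sector rather than just the positive axis), so that the $R$-bound is obtained on a sector strictly smaller than the right half-plane; once the complex domination is in hand, the transference of $R$-boundedness through Khintchine's inequality is routine.
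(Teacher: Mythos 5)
Your core transference step is exactly the paper's: take Laplace transforms of the pointwise bound to dominate resolvents of $B$ by positive resolvents of $A$, then convert $R$-boundedness into a square-function estimate via Khintchine--Kahane, use the pointwise domination and the positivity of $(I+sA)^{-1}$ inside the square function, and translate back. (One small caveat: in $L^p(\Lambda^kTM,m)$ the equivalence $\E\|\sum_m r_m\omega_m\|_p\eqsim\|(\sum_m|\omega_m|^2)^{1/2}\|_p$ is not literally pointwise scalar Khintchine, because the fibre inner product varies with the base point; the paper devotes a separate lemma to it, proved by locally diagonalising the Gram matrix and using a partition of unity. This is routine but not free.)

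The genuine gap is in your last step, and you have correctly identified it yourself without closing it. To get $R$-sectoriality of angle $<\frac12\pi$ one needs $R$-boundedness of $\{(I+\mu B)^{-1}\}$ for $\mu$ in a sector of angle strictly \emph{larger} than $\frac12\pi$, and your proposal is to reach such $\mu$ by integrating $T_z$ along complex rays and invoking a pointwise domination $|T_z\omega|\le C\,S_{(\cdot)}|\omega|$ for complex $z$. That complex-time domination is \emph{not} a formal consequence of the real-time bound plus analyticity (real-time domination controls $|T_z|$ only through the modulus of the Laplace kernel, and the analytic extension of a dominated semigroup need not be dominated with uniform constants up to the edge of the analyticity sector), and "taken from the way these semigroups arise" is not available in an abstract lemma whose only hypotheses are the ones stated. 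The obstacle is avoidable: the real-time Laplace representation already gives $|(I+\lambda B)^{-1}\omega|\le \tfrac{C|\lambda|}{\Re\lambda}\,(I+\tfrac{|\lambda|^2}{\Re\lambda}A)^{-1}|\omega|$ for \emph{all} $\lambda$ with $\Re\lambda>0$ (since $|e^{-t/\lambda}|=e^{-t\Re(1/\lambda)}$), so the square-function argument yields $R$-boundedness of $\{(I+\lambda B)^{-1}:\Re\lambda>0\}$ with no complex-time information whatsoever. The passage from the open right half-plane to a sector of angle $>\frac12\pi$ is then the standard Taylor/Neumann-series perturbation argument for $R$-bounded resolvent families (expand $(I+\mu B)^{-1}$ around a nearby $\lambda$ with $\Re\lambda>0$ and sum the geometric series of $R$-bounds), which is how the paper concludes. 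With that substitution your argument is complete; as written, the decisive step rests on an unproved and, in general, false domination principle.
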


For the proof of this lemma we need the following result.

\begin{lemma}\label{lem:squarefunction}Let $(M,g)$ be a Riemannian manifold of dimension $n$ equipped with a measure 
$m$.
For all $\omega_1,\ldots,\omega_N \in L^p(\Lambda^kTM,m)$ we have
$$
\E \Bigl\Vert \sum_{i=1}^N r_i \omega_i\Bigr\Vert_{L^p(\Lambda^kTM,m)}\eqsim_p  
\Bigl\Vert \Big(\sum_{i=1}^N |\omega_i|^2\Big)^{1/2}\Bigr\Vert_{L^p(M,m)},
$$
where $(r_i)_i$ is a Rademacher sequence; the implicit constant only depends on 
$p$. 
\end{lemma}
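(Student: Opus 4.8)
The plan is to reduce the vector-valued inequality on $L^p(\Lambda^k TM,m)$ to the classical Kahane--Khintchine inequality by exploiting the pointwise Euclidean structure of $\Lambda^k T_xM$. First I would recall the classical Khintchine inequality: for any Hilbert space $H$ and $h_1,\dots,h_N\in H$, one has $\E\|\sum_{i=1}^N r_i h_i\|_H^p \eqsim_p \bigl(\sum_{i=1}^N \|h_i\|_H^2\bigr)^{p/2}$, with constants depending only on $p$ (and not on $H$ or $N$). I would apply this fibrewise, taking $H=\Lambda^k T_xM$ with its Riemannian inner product, and $h_i=\omega_i(x)$.

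The key steps are then: (1) For $m$-a.e.\ fixed $x\in M$, apply the classical Khintchine inequality in the finite-dimensional Euclidean space $\Lambda^k T_xM$ to obtain
$$
\E\Bigl|\sum_{i=1}^N r_i \omega_i(x)\Bigr|^p \eqsim_p \Bigl(\sum_{i=1}^N |\omega_i(x)|^2\Bigr)^{p/2},
$$
where $|\cdot|$ is the pointwise norm on $\Lambda^k T_xM$ defined earlier in this section. (2) Integrate this pointwise equivalence in $x$ against $m$; since the constants are uniform in $x$, the integrals satisfy the same two-sided estimate:
$$
\int_M \E\Bigl|\sum_{i=1}^N r_i \omega_i(x)\Bigr|^p \ud m(x) \eqsim_p \int_M \Bigl(\sum_{i=1}^N |\omega_i(x)|^2\Bigr)^{p/2} \ud m(x) = \Bigl\Vert \Bigl(\sum_{i=1}^N |\omega_i|^2\Bigr)^{1/2}\Bigr\Vert_{L^p(M,m)}^p.
$$
(3) Apply Fubini--Tonelli (everything is non-negative and measurable) to interchange $\E$ and $\int_M$, giving $\int_M \E|\sum_i r_i\omega_i(x)|^p\ud m(x) = \E\int_M |\sum_i r_i\omega_i(x)|^p \ud m(x) = \E\bigl\Vert \sum_i r_i\omega_i\bigr\Vert_{L^p(\Lambda^k TM,m)}^p$. (4) Finally, pass from the $p$-th power to the first power: take $p$-th roots of the resulting equivalence $\E\bigl\Vert\sum_i r_i\omega_i\bigr\Vert_p^p \eqsim_p \bigl\Vert(\sum_i|\omega_i|^2)^{1/2}\bigr\Vert_{L^p(M,m)}^p$, and invoke the Kahane--Khintchine inequality to replace $(\E\|\cdot\|_p^p)^{1/p}$ by $\E\|\cdot\|_p$ at the cost of a further $p$-dependent constant.

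I do not expect a serious obstacle here; the statement is essentially a bookkeeping exercise combining classical scalar/Hilbertian Khintchine inequalities with Fubini. The only mild point to be careful about is measurability: one should note that $(x,\epsilon)\mapsto |\sum_i \epsilon_i\omega_i(x)|$ is jointly measurable (the $\omega_i$ are measurable $k$-forms and the Rademacher variables take finitely many values), so that Fubini--Tonelli applies without fuss. One could alternatively phrase the whole argument via the isometry $L^p(\Lambda^k TM,m) \cong L^p(M,m;\Lambda^k T_\bullet M)$ and quote the Kahane--Khintchine inequality for $L^p$-spaces of Hilbert-space-valued functions directly (see \cite{HNVW1}), which makes the proof a one-line citation; I would include the fibrewise computation above for the reader's convenience.
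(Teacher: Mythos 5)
Your proposal is correct, and it takes a genuinely different (and in fact more streamlined) route than the paper. The paper localizes to a coordinate chart, represents the fibre inner product by the Gram matrix $G_k^{-1}$, orthogonally diagonalizes it to transform the $\omega_i$ into genuinely $\CC^{d_k}$-valued functions $\eta_i$ with $|\eta_i|_{\CC^{d_k}}=|\omega_i|$, quotes the standard square-function characterisation of Rademacher sums for $\CC^{d_k}$-valued $L^p$-functions, and finally patches the charts together with a partition of unity. You instead apply the Khintchine--Kahane inequality directly in each fibre $\Lambda^kT_xM$, which is a finite-dimensional Hilbert space, and then integrate and use Fubini--Tonelli; the essential point -- which you correctly rely on and which is also the hidden engine of the paper's argument -- is that the Khintchine--Kahane constants are universal, hence uniform in $x$, so the pointwise two-sided bound survives integration. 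Your approach buys coordinate-freeness and dispenses with both the Gram-matrix diagonalisation and the partition of unity (the latter being the step where the paper needs the specific form $\Vert\omega\Vert_p^p=\int_M|\omega|^p\,\dd m$ of the norm to split $|\cdot|^p$ along $\phi_U$); the paper's approach buys a reduction to a literally quotable statement about $\CC^{d_k}$-valued functions. You are also right to flag the two small points of care: joint measurability of $(x,\epsilon)\mapsto|\sum_i\epsilon_i\omega_i(x)|^p$ (immediate since the expectation is a finite average over $2^N$ sign patterns) and the final passage from $(\E\Vert\cdot\Vert_p^p)^{1/p}$ to $\E\Vert\cdot\Vert_p$, which is one more application of Kahane--Khintchine, this time in the Banach space $L^p(\Lambda^kTM,m)$. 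No gaps.
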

\begin{proof}
{\em Step 1} -- First we assume that $\omega_1,\ldots,\omega_N$ are supported 
in a single coordinate chart $(U,x)$. 
With slight abuse of notation we will identify each $\omega_i|_U $ with the 
corresponding $\CC^{d_k}$-valued function on $U$; here, $d_k = \binom{n}{k}$
is the dimension of $\Lambda^k TU$. 

Denote by $G_k^{-1}$ the symmetric, 
positive definite $d_k\times d_k$-matrix with elements 
$$
(G_k^{-1})_{i_1i_2\ldots i_k,j_1j_2\ldots j_k} = (\ud x^{i_1} \wedge \cdots 
\wedge 
\dd x^{i_k})\cdot(\ud x^{j_1} \wedge \cdots \wedge \dd x^{j_k})
$$
where $1 \leq i_1 < \cdots< i_k \leq n$ and $1 \leq j_1 < \cdots < j_k \leq n$.

Since $G_k^{-1}$ is orthogonally diagonalisable, 
we have $G_k^{-1}(p) = Q(p)D(p)Q(p)^T$, where $D(p)$ is diagonal 
with positive 
diagonal entries. Now set $$\eta_i(p): = \sqrt{D(p)}Q(p)^T\omega_i(p)$$ for  
$p \in U$.
By using the Kahane-Khintchine inequality,
\begin{align*}
\E\Bigl\Vert \sum_i r_i\omega_i\Bigr\Vert_{L^p(\Lambda^kTM,m)}^p
& =
\E\Bigl\Vert \sum_i r_i\omega_i\Bigr\Vert_{L^p(\Lambda^kTU,m|_U)}^p\\
& \eqsim_p
\E\Bigl\Vert \sum_i r_i\omega_i\Bigr\Vert_{L^2(\Lambda^kTU, m|_U)}^p\\
&=
\Bigl(\E\int_U \Bigl|\sum_i r_i\omega_i\Bigr|^2\ud m \Bigr)^{p/2}\\
&=
\Bigl(\E\int_{U} \sum_{i,j} r_ir_j (\omega_i\cdot\overline 
\omega_j)_{G_k^{-1}}\ud m\Bigr)^{p/2}\\
&=
\Bigl(\E\int_{U} \sum_{i,j} r_ir_j \omega_i^TG_k^{-1}\overline 
\omega_j\ud m\Bigr)^{p/2}\\
&=
\Bigl(\E\int_{U} \sum_{i,j} r_ir_j \eta_i^T\overline \eta_j\ud 
m\Bigr)^{p/2}\\
&=
\Bigl(\int_{U} \E\Bigl|\sum_i r_i\eta_i\Bigr|^2\ud m\Bigr)^{p/2}\\
&=
\E\Bigl\Vert \sum_i r_i\eta_i\Bigr\Vert_{L^2(U, m|_U;\CC^{d_k})}^p.
\end{align*}
Next, by the square function characterisation of Rademacher 
sums for $\CC^{d_k}$-valued functions,
\begin{align*}
\E\Bigl\Vert \sum_i r_i\eta_i\Bigr\Vert_{L^p(U, m|_U;\CC^{d_k})}^p
& \eqsim_p
\Bigl\Vert \Bigl(\sum_i |\eta_i|^2\Bigr)^{1/2}\Bigr\Vert_{L^p(U,m|_U)}^{p}\\
&=
\Bigl\Vert \Bigl(\sum_i \eta_i^T\overline 
\eta_i\Bigr)^{1/2}\Bigr\Vert_{L^p(U,m|_U)}^p\\
&=
\Bigl\Vert \Bigl(\sum_i \omega_i^TG_k^{-1}\overline 
\omega_i\Bigr)^{1/2}\Bigr\Vert_{L^p(U,m|_U)}^p\\
&=
\Bigl\Vert \Bigl(\sum_i 
\omega_i\cdot\omega_i\Bigr)^{1/2}\Bigr\Vert_{L^p(U,m|_U)}^p\\
&=
\Bigl\Vert \Bigl(\sum_i |\omega_i|^2\Bigr)^{1/2}\Bigr\Vert_{L^p(M,m)}^p.
\end{align*}

\smallskip
{\em Step 2} -- 
We now turn to the general case. Let $(\phi_U)_{U\in\mathscr{U}}$ be a partition 
of unity subordinate to a collection of 
coordinate charts $\mathscr{U}$ covering $M$. Then, using Fubini's theorem and 
the result of Step 1, 
\begin{align*}
\E\Bigl\Vert \sum_i r_i\omega_i\Bigr\Vert_{L^p(\Lambda^kTM,m)}^p
& = 
\E \sum_U \int_M   \phi_U\Bigl|\sum_i r_i\omega_i\Bigr|^p~\dd m\\
& =
\E \sum_U \Bigl\Vert  \sum_i 
r_i\phi_U^{1/p}\omega_i\Bigr\Vert_{L^p(\Lambda^kTM,m)}^p\\
& \eqsim_p
\sum_U \Bigl\Vert \Bigl(\sum_i 
|\phi_U^{1/p}\omega_i|^2\Bigr)^{1/2}\Bigr\Vert_{L^p(M,m)}^p\\
& =
\sum_U \int_M \Bigl(\sum_i |\phi_U^{1/p}\omega_i|^2\Bigr)^{p/2}\dd m\\
& =
\sum_U \int_M \phi_U \Bigl(\sum_i |\omega_i|^2\Bigr)^{p/2}\dd m\\
& =
\int_M  \Bigl(\sum_i |\omega_i|^2\Bigr)^{p/2}\dd m\\
& = 
\Bigl\Vert \Bigl(\sum_i |\omega_i|^2\Bigr)^{1/2}\Bigr\Vert_{L^p(M,m)}^p.
\end{align*}
\end{proof}

\begin{proof}[Proof of Lemma \ref{lem:Domination}]
Upon taking Laplace transforms, the pointwise assumption implies, for 
$\lambda\in \CC$ with $\Re\lambda>0$, 
 $$ |(I+\lambda B)^{-1} \omega| \le C (I+\re\lambda A)^{-1}|\omega|.$$
Hence if $\re\lambda_1,\dots \re\lambda_N >0$, then for all 
$\omega_1,\dots,\omega_N\in L^p(\Lambda^kTM,m)$ 
we find, by Lemma \ref{lem:squarefunction},
\begin{align*}  
\E \Bigl\Vert \sum_{i=1}^N r_i (I+\lambda_i B)^{-1} 
\omega_i\Bigr\Vert_{L^p(\Lambda^kTM,m)}
& \eqsim_p
\Bigl\Vert \Big(\sum_{i=1}^N |(I+\lambda_i B)^{-1} 
\omega_i|^2\Big)^{1/2}\Bigr\Vert_{L^p(M,m)}
\\ & \le C 
\Bigl\Vert \Big(\sum_{i=1}^N [(I+\re\lambda_i A)^{-1} 
|\omega_i|]^2\Big)^{1/2}\Bigr\Vert_{L^p(M,m)}
\\ & \eqsim_p
C \E \Bigl\Vert \sum_{i=1}^N r_i (I+\lambda_i A)^{-1} 
|\omega_i|\Bigr\Vert_{L^p(M,m)}
\\ & \le CR\E \Bigl\Vert \sum_{i=1}^N r_i |\omega_i|\Bigr\Vert_{L^p(M,m)}
\\ & \eqsim_p CR\Bigl\Vert \sum_{i=1}^N |\omega_i|^2\Bigr\Vert_{L^p(M,m)}
\\ & \eqsim_p CR
\E \Bigl\Vert \sum_{i=1}^N r_i \omega_i\Bigr\Vert_{L^p(\Lambda^kTM,m)}.
\end{align*}
Here, $R$ denotes the $R$-bound of the set  $\{(I+sA)^{-1}:\,s>0\}$.
This gives the $R$-boundedness of the set $\{(I+\lambda B)^{-1}:\, \re 
\lambda>0\}$.
A standard Taylor expansion argument allows us to extend this to the 
$R$-boundedness of the set
$\{(I+\lambda B)^{-1}:\, \lambda\in \Sigma_\nu\}$ for some $\nu>\frac12\pi$.
\end{proof}

We now return to the setting considered at the beginning of this section.
Combining the preceding lemmas we arrive at the following result.

\begin{proposition}[$R$-sectoriality of $L_k$]\label{prop:Rsectorial}
Let Hypothesis \ref{curvaturecondition} be 
satisfied. For all $1 < p < \infty$ and $k=0,1,\ldots,n$, the operator $L_k$ is 
$R$-sectorial on $L^p(\Lambda^kTM,m)$ with angle $\omega_R^+(L_k) < \frac12\pi$.
\end{proposition}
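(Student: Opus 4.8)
The plan is to deduce Proposition~\ref{prop:Rsectorial} by applying Lemma~\ref{lem:Domination} with $A = L_0$ acting on $L^p(M,m)$ and $B = L_k$ acting on $L^p(\Lambda^kTM,m)$. For this we need to check the three hypotheses of that lemma: that $L_0$ and $L_k$ are sectorial of angle $<\frac12\pi$, that the semigroups satisfy the pointwise domination $|P_t^k\omega| \le C\,P_t^0|\omega|$, and that the set $\{(I+sL_0)^{-1}:\,s>0\}$ is $R$-bounded. The first point was already established in the discussion preceding the proposition: by the Stein interpolation theorem the semigroups $(P_t^k)_{t\ge0}$ extend analytically to the sector $\Sigma_{\omega_p}$ with $\omega_p = \frac{\pi}{2}(1-|2/p-1|) < \frac12\pi$, with contractive values there, so each $L_k$ (including $L_0$) is sectorial of angle $\omega_p < \frac12\pi$.

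For the $R$-boundedness of the resolvents of $L_0$ on $L^p(M,m)$: since $(P_t^0)_{t\ge0}$ is a positivity-preserving contraction semigroup on the scale $L^p(M,m)$, $1\le p<\infty$ (as recorded above via \eqref{eq:Markov} and the results of \cite{Bakry, Yoshida}), it is a \emph{positive} contraction semigroup, and such semigroups on $L^p$-spaces with $1<p<\infty$ automatically have $R$-bounded resolvent families $\{(I+sL_0)^{-1}:\,s>0\}$ — in fact $\{\lambda(\lambda-L_0)^{-1}:\,\lambda\notin\overline{\Sigma_\vartheta^+}\}$ is $R$-bounded for every $\vartheta>\frac12\pi$ — by the theorem of Weis on positive contraction semigroups (see \cite[Chapter 10]{HNVW2}; cf.\ also \cite{KW}). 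Here the crucial input is that $L^p(M,m)$, $1<p<\infty$, has the requisite geometric property (it is a Banach lattice with finite cotype, indeed a UMD space), so Weis's criterion applies and gives $R$-sectoriality of $L_0$ with angle $<\frac12\pi$.

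The remaining ingredient is the pointwise domination $|P_t^k\omega| \le C\,P_t^0|\omega|$. This is exactly the inequality alluded to in the Introduction as \eqref{eq:ptw}: under Hypothesis~\ref{curvaturecondition}, the variant Bochner--Lichnérowicz--Weitzenböck formula \eqref{eq:BLW2} gives $\frac12 L_0|\omega|^2 \le \omega\cdot L_k\omega$ whenever $Q_k(\omega,\omega)\ge0$ (discarding the non-negative terms $|\nabla\omega|^2$ and $Q_k(\omega,\omega)$), and a standard Kato-type argument — applied to the smooth solution $\omega_t := P_t^k\omega$ of $\partial_t\omega_t = -L_k\omega_t$ and comparing $|\omega_t|$ with the scalar diffusion generated by $-L_0$ via the maximum principle / Feynman--Kac together with the probabilistic representation $P_t^0f(x)=\EE^x f(X_t)$ — yields $|P_t^k\omega| \le P_t^0|\omega|$ (so $C=1$). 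This is precisely what Yoshida establishes in \cite[Proposition 2.3]{Yoshida}, whose probabilistic representation $P_t^k\omega(v)=\EE^v(\omega(V_t))$ was already quoted above; I would simply invoke that result for $\omega\in C_{\rm c}^\infty(\Lambda^kTM)$ and extend by density, using contractivity of $(P_t^0)$ and $(P_t^k)$ on the relevant $L^p$-spaces.

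With all three hypotheses verified, Lemma~\ref{lem:Domination} applies and delivers that $L_k$ is $R$-sectorial on $L^p(\Lambda^kTM,m)$ with angle $<\frac12\pi$ for every $1<p<\infty$ and every $k=0,1,\ldots,n$, which is the assertion of the proposition. The main obstacle is less any single step than making sure the pointwise domination is genuinely available in the weighted setting at the level of generality needed — i.e.\ that \eqref{eq:BLW2} combined with Hypothesis~\ref{curvaturecondition} really does force $|P_t^k\omega|\le P_t^0|\omega|$ and not merely a local or a priori estimate; this is where one leans essentially on the Dirichlet-form/Markov-process machinery of \cite{Yoshida, Fukushima} already cited above, rather than reproving it.
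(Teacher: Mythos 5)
Your proposal is correct and follows essentially the same route as the paper: both deduce the result from Lemma \ref{lem:Domination} applied with $A=L_0$, using the pointwise domination $|P_t^k\omega|\le P_t^0|\omega|$ from \cite{Bakry, Yoshida} and the $R$-sectoriality of $L_0$ coming from positivity of $(P_t^0)$ via the Kalton--Weis/Weis theorem (the paper cites \cite[Corollary 5.2]{KalWeis}). Your additional sketch of why the domination holds is consistent with what the paper simply quotes from the literature.
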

\begin{proof}
Fix $1 < p < \infty$. As we have already noted, $-L_k$ generates a strongly 
continuous analytic contraction semigroup on $L^p(\Lambda^kTM)$. 
By \cite{Bakry, Yoshida}, these semigroups satisfy the pointwise bound 
\begin{align}\label{eq:ptw} |P_t^k\omega| \leq P_t^0|\omega|
\end{align}
 for all $\omega \in L^p(\Lambda^kTM,m)$. 
Since the semigroup generated by $-L_0$ is positive, 
$L_0$ is $R$-sectorial by  \cite[Corollary 5.2]{KalWeis}. Lemma 
\ref{lem:Domination} then implies that 
$L_k$ is $R$-sectorial, of angle $<\frac12\pi$. 
\end{proof}

We are now ready to state our first main result.  

\begin{theorem}[Bounded $H^\infty$-calculus for 
$L_k$]\label{thm:HinftyLk}
Let Hypothesis \ref{curvaturecondition} be 
satisfied. For all $1 < p < \infty$ and all $k= 0,1,\ldots,n$, the operator 
$L_k$ has a bounded 
$H^\infty$-calculus on $L^p(\Lambda^kTM,m)$ of angle $<\frac12\pi$.
\end{theorem}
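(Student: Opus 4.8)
The plan is to deduce the bounded $H^\infty$-calculus for $L_k$ from the square function estimates available in the literature, combined with the $R$-sectoriality already established in Proposition~\ref{prop:Rsectorial}. Recall that on a space of finite cotype (such as $L^p(\Lambda^kTM,m)$ with $1<p<\infty$), an $R$-sectorial operator of angle $<\tfrac12\pi$ admits a bounded $H^\infty$-calculus of angle $<\tfrac12\pi$ \emph{provided} one has two-sided square function estimates
\[
\Bigl\Vert \Bigl(\int_0^\infty |\psi(tL_k)\omega|^2\,\frac{\ud t}{t}\Bigr)^{1/2}\Bigr\Vert_p \eqsim_p \Vert\omega\Vert_p, \qquad \omega\in\overline{\Ran(L_k)},
\]
for a suitable nondegenerate $\psi\in H^1\cap H^\infty$ of a sector, e.g.\ $\psi(z)=z\ee^{-z}$. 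The reference \cite{Yoshida} establishes precisely such square function (Littlewood--Paley--Stein type) estimates for the Witten Laplacian on $k$-forms under Hypothesis~\ref{curvaturecondition}; this is exactly the point where the non-negativity of the Bakry--Emery Ricci curvature (rather than mere sectoriality) enters, since Littlewood--Paley estimates on $k$-forms are not a formal consequence of the semigroup being analytic and contractive.

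First I would recall the abstract equivalence: for an $R$-sectorial operator $A$ of angle $\omega<\tfrac12\pi$ on a Banach space with finite cotype, the boundedness of the $H^\infty(\Sigma_\sigma^+)$-calculus for $\sigma\in(\omega,\tfrac12\pi)$ is equivalent to the validity of the square function estimate above for one (equivalently every) nonzero $\psi\in H^1(\Sigma_\sigma^+)\cap H^\infty(\Sigma_\sigma^+)$, together with the dual estimate; see \cite[Chapter 10]{HNVW2} (or \cite{CDMY, LeMerdy}). Since $L^p$ in the reflexive range enjoys the kernel-range decomposition $L^p(\Lambda^kTM,m)=\Ker(L_k)\oplus\overline{\Ran(L_k)}$ from Remark~\ref{rem}, and $L_k$ is trivial on its kernel, it suffices to verify the square function bounds on $\overline{\Ran(L_k)}$, which is where the estimates of \cite{Yoshida} apply. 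Second, I would invoke the pointwise domination \eqref{eq:ptw} together with Lemma~\ref{lem:squarefunction} to transfer, if needed, vertical square function bounds for $P_t^k$ from the scalar case $P_t^0$; here again the scalar bounds for $L_0$ (a positive, symmetric, sub-Markovian semigroup) are classical (Stein's Littlewood--Paley theory), and \cite{Yoshida} supplies the upgrade to $k$-forms, or alternatively Lemma~\ref{lem:squarefunction} reduces the $k$-form square function literally to the scalar one.

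Thus the key steps, in order, are: (1) invoke Proposition~\ref{prop:Rsectorial} for $R$-sectoriality of $L_k$ with angle $<\tfrac12\pi$; (2) invoke the finite-cotype of $L^p$ and the abstract characterisation of bounded $H^\infty$-calculus via square functions \cite[Chapter 10]{HNVW2}; (3) quote the Littlewood--Paley--Stein square function estimates for $L_k$ from \cite{Yoshida}, valid under Hypothesis~\ref{curvaturecondition}, possibly routed through Lemma~\ref{lem:squarefunction} and the pointwise bound \eqref{eq:ptw} to reduce to the scalar semigroup $P_t^0$; (4) handle the kernel part trivially via the decomposition of Remark~\ref{rem}; (5) conclude boundedness of the $H^\infty(\Sigma_\sigma^+)$-calculus for every $\sigma\in(\omega_R^+(L_k),\tfrac12\pi)$, hence of angle $<\tfrac12\pi$. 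The main obstacle is step (3): one must ensure that the estimates quoted from \cite{Yoshida} are genuinely two-sided (both the square function bound and its reverse, or equivalently the analogous bound in the dual exponent $p'$ for the adjoint semigroup), since a one-sided bound gives only a ``square function'' dilation estimate and not the full functional calculus. The non-negativity hypothesis is used precisely to guarantee that the relevant Riesz-transform / Littlewood--Paley operators are bounded on $L^p$ for all $1<p<\infty$, which by duality and interpolation yields the two-sided estimate; once that is in hand, the remaining steps are the standard abstract machinery.
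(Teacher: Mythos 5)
Your proposal matches the paper's proof in all essentials: $R$-sectoriality from Proposition \ref{prop:Rsectorial}, the two-sided square function estimate from \cite[Theorem 5.3]{Yoshida} (which the paper implements with $\psi(z)=\tfrac{1}{\sqrt2}\sqrt{z}\,e^{-\sqrt z}$ so as to match Yoshida's Poisson-semigroup formulation, and which already comes with the kernel projection $E_0^k$ built in), and the abstract Kalton--Weis result that square function estimates plus $R$-sectoriality yield a bounded $H^\infty$-calculus of angle at most the angle of $R$-sectoriality. Your caveat about two-sidedness is exactly the right concern, and it is resolved precisely as you anticipate: Yoshida's estimate is genuinely two-sided modulo the kernel, so no further duality argument is needed.
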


For $k=0$ the proposition is an immediate consequence of \cite[Corollary 
5.2]{KalWeis}; see  
\cite{CD-Functional} for a more detailed quantitative statement. 
For $k=1,\dots,n$ this argument cannot be used and instead we shall apply the 
square function
estimates of \cite{Yoshida}. To make the link between the definitions used in 
that paper and the 
ones used here, we need to make some preliminary remarks. 

In \cite{Yoshida}, the Hodge Laplacian on $k$-forms is defined as
\begin{align}\label{tracehessian}
\widetilde\Delta_k := -\Tr(\Nabla\Nabla).
\end{align}
This is motivated by the fact that on functions this operator agrees with $\Delta_k$ (see 
\cite{Grig}). 
Similarly in \cite{Yoshida} one defines
\begin{align}\label{tracehessian-rho}
\widetilde L_k := \widetilde\Delta_k - \Tr(\Nabla(\log\rho)\otimes\Nabla).
\end{align}
Actually, the definition in \cite{Yoshida} there
differs notationally from \eqref{tracehessian-rho} in that $e^{-\rho}$ is written
for the strictly positive function that we denote by $\rho$. 

Define $$V_k := L_k - \widetilde L_k$$ as a linear operator on $C_{\rm 
c}^\infty(\Lambda^kTM)$ (cf. \cite[eq. (1.2)]{Yoshida}, 
recalling our convention of considering the negative Laplacian). 
We will show in a moment that 
\begin{align}\label{eq:ass-Yosh}\omega\cdot V_k\omega = Q_k(\omega,\omega),
\end{align}
so that Hypothesis \ref{curvaturecondition} can be rephrased as assuming that 
$\omega\cdot V_k\omega\ge 0$. 
This corresponds to the assumption made in \cite[Eq. (1.4)]{Yoshida}. Thus, the results 
from \cite{Yoshida} may be applied in the present situation. 

Turning to the proof of \eqref{eq:ass-Yosh}, first observe that 
$\widetilde\Delta_k$ satisfies 
\begin{equation}\label{eq:YBLW1}
\frac12\widetilde\Delta_0|\omega|^2 = \omega\cdot\widetilde\Delta_k\omega - 
|\Nabla\omega|^2,
\end{equation}
from which it follows that
\begin{equation}\label{eq:YBLW2}
\frac12\widetilde L_0|\omega|^2 = \omega\cdot\widetilde L_k\omega - 
|\Nabla\omega|^2 - \frac12\Tr(\Nabla(\log\rho)\otimes \Nabla|\omega|^2) + 
\omega\cdot\Tr(\Nabla(\log\rho)\otimes\Nabla\omega).
\end{equation}
This can be simplified to 
\begin{equation}\label{eq:YBLW3}
\frac12\widetilde L_0|\omega|^2 = \omega\cdot\widetilde L_k\omega - 
|\Nabla\omega|^2.
\end{equation}
Indeed, in a coordinate chart one has
$$
\begin{aligned}
\frac12\Tr(\Nabla(\log\rho)\otimes\Nabla|\omega|^2)	
&=	\frac12\sum_{j=1}^n \Nabla^j(\log\rho)\Nabla_j|\omega|^2\\
&=	\sum_{j=1}^n \Nabla^j(\log\rho)\Nabla_j\omega \cdot \omega
= \Tr(\Nabla(\log\rho)\otimes\Nabla\omega)\cdot\omega.
\end{aligned}
$$									
Noting that $L_0 = \widetilde L_0$, combining \eqref{eq:BLW2} and 
\eqref{eq:YBLW3} gives
$\omega\cdot V_k\omega = Q_k(\omega,\omega)$
as desired.

\begin{proof}[Proof of Theorem \ref{thm:HinftyLk}]
Fix $1 < p < \infty$. By Proposition \ref{prop:Rsectorial}, $L_k$ is $R$-sectorial on 
$L^p(\Lambda^kTM,m)$ and $\omega_R^+(L_k)<\frac12\pi$.
Pick $\vartheta\in (\omega_R^+(L_k),\frac12\pi)$.
The function $\psi(z) := 
\frac{1}{\sqrt2}\sqrt{z}e^{-\sqrt{z}}$ 
belongs to $H^1(\Sigma_\vartheta^+)\cap H^\infty(\Sigma_\vartheta^+)$. Using 
the substitution $t = s^2$ we see that 
$$
\int_0^\infty |\psi(tL_k)\omega|^2 \,\frac{\dd t}{t} = \int_0^\infty \left| 
\frac{\partial}{\partial t}\Big|_{t = s}e^{-tL_k^{1/2}}\omega \right|^2 s\ud s 
$$
Accordingly, by \cite[Theorem 5.3]{Yoshida},
\begin{equation}\label{eq:sqf-L}
\Vert \omega - E_0^k\omega\Vert _p \lesssim_p \left\Vert\int_0^\infty 
|\psi(tL_k)\omega|^2 \,\frac{\dd t}{t}\right\Vert_p \lesssim_p \Vert 
\omega\Vert _p
\end{equation}
for all $\omega \in C_{\rm c}^\infty(\Lambda^kTM)$, where $E_0^k$ denotes 
projection 
onto the kernel of $L_k$.  
By a routine density argument (using that convergence in the mixed 
$L^p(L^2)$-norm 
implies almost everywhere convergence along a suitable subsequence)
these inequalities extend to arbitrary $k$-forms $\omega\in L^p(\Lambda^kTM,m)$.

Now it is well known that for an $R$-sectorial operator, the square 
function estimate \eqref{eq:sqf-L} implies 
the operator having a bounded $H^\infty$-calculus of angle at most 
equal to its angle of 
$R$-sectoriality (see \cite{KalWei2} or \cite[Chapter 10]{HNVW2}).
\end{proof}

\section{The Hodge--Dirac operator}\label{sec:HodgeDirac}

Throughout this section we shall assume that Hypothesis \ref{curvaturecondition} is in force.
Under this assumption one may check,
using the Bochner-Lichn\'erowicz-Weitzenb\"ock formula \eqref{eq:BLW2} instead 
of \eqref{eq:BLW}, that the results in  
\cite[Section 5]{Bakry} proved for the special case $\rho\equiv 1$ 
carry over to general strictly positive functions $\rho \in C^\infty(M)$. 
Whenever we refer to results from \cite{Bakry} we bear this in mind. 

\begin{definition}[Hodge--Dirac operator associated with $\rho$] 
The {\em Hodge--Dirac operator} associated with $\rho$ is the linear operator
$D$ on $C_{\rm c}^\infty(\Lambda TM)$ defined by 
$$D := \dd + \delta.$$
\end{definition}
As in Remark \ref{rho} it would be more accurate to denote this operator by 
$D_\rho$, but again we prefer to keep the notation simple.

With respect to the decomposition $C_{\rm c}^\infty(\Lambda TM) = \bigoplus_{k=0}^n C_{\rm c}^\infty(\Lambda^kTM)$, $D$  can be represented 
by the 
$(n+1)\times(n+1)$-matrix
\[ D =  \left( \begin{array}{ccccc}
0 & \delta_0 & & & \\
\dd_0 & 0 & \delta_1 & & \\
& \ddots & \ddots & \ddots & \\
& &  \dd_{n-2} & 0 & \delta_{n-1}\\
& & & \dd_{n-1} & 0 \end{array} \right),\] 
From 
$\ud^2 = \delta^2 = 0$ it follows that 
\[ D^2 = \left( \begin{array}{ccc}
L_0 & & \\
 & \ddots & \\
& & L_n\\ \end{array} \right)=:L. \]

\begin{lemma}\label{hodgediracclosable}
For all $1\le p<\infty$ the operator is closable as a densely defined operator 
on $L^p(\Lambda TM,m)$.
\end{lemma}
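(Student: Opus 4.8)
The plan is to use the standard closability criterion: a densely defined operator $T$ acting between Banach spaces is closable if and only if, whenever a sequence $(\omega_j)$ in its domain satisfies $\omega_j\to 0$ and $T\omega_j\to\eta$, one necessarily has $\eta=0$. I would apply this with $T=D$, domain $C_{\rm c}^\infty(\Lambda TM)$ (which is dense in $L^p(\Lambda TM,m)$ for $1\le p<\infty$, as recorded after the definition of these spaces), and verify the criterion by testing against smooth compactly supported forms, exploiting that $D$ is formally self-adjoint with respect to the weighted pairing $\inp{\cdot}{\cdot}_\rho$.

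First I would record the formal self-adjointness. Since $\delta_k$ is by construction the adjoint of $\dd_k$ with respect to $\inp{\cdot}{\cdot}_\rho$, and since both $\dd$ and $\delta$ map $C_{\rm c}^\infty(\Lambda TM)$ into itself, for all $\omega,\zeta\in C_{\rm c}^\infty(\Lambda TM)$ one has, componentwise and then summing over the degree,
$$
\inp{D\omega}{\zeta}_\rho=\sum_{k}\bigl(\inp{\dd_{k-1}\omega_{k-1}}{\zeta_k}_\rho+\inp{\delta_k\omega_{k+1}}{\zeta_k}_\rho\bigr)=\sum_{k}\bigl(\inp{\omega_{k-1}}{\delta_{k-1}\zeta_k}_\rho+\inp{\omega_{k+1}}{\dd_k\zeta_k}_\rho\bigr)=\inp{\omega}{D\zeta}_\rho.
$$
Next, suppose $(\omega_j)\subseteq C_{\rm c}^\infty(\Lambda TM)$ with $\omega_j\to 0$ and $D\omega_j\to\eta$ in $L^p(\Lambda TM,m)$. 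Fix $\zeta\in C_{\rm c}^\infty(\Lambda TM)$; then $\zeta$ and $D\zeta$ are smooth and compactly supported, hence lie in $L^q(\Lambda TM,m)$ for every $q$, in particular in $L^{p'}(\Lambda TM,m)$ with $p'$ the conjugate exponent. By H\"older's inequality and the formal self-adjointness,
$$
\inp{\eta}{\zeta}_\rho=\lim_{j\to\infty}\inp{D\omega_j}{\zeta}_\rho=\lim_{j\to\infty}\inp{\omega_j}{D\zeta}_\rho=0,
$$
the first equality because $D\omega_j\to\eta$ in $L^p$, the last because $\omega_j\to 0$ in $L^p$. Since this holds for every $\zeta\in C_{\rm c}^\infty(\Lambda TM)$, and $C_{\rm c}^\infty(\Lambda TM)$ is dense in $L^{p'}(\Lambda TM,m)$ when $1<p<\infty$, the element $\eta\in L^p(\Lambda TM,m)$ induces the zero functional on $L^{p'}(\Lambda TM,m)$, whence $\eta=0$. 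For the endpoint $p=1$, where $C_{\rm c}^\infty$ is not norm-dense in $L^\infty=(L^1)^*$, one instead localises to coordinate charts via a partition of unity and invokes the fundamental lemma of the calculus of variations to conclude $\eta=0$ directly from the vanishing of all the pairings $\inp{\eta}{\zeta}_\rho$. This verifies the criterion and proves closability.

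I do not anticipate a genuine obstacle here; the only two points deserving care are the formal self-adjointness of $D$ with respect to $\inp{\cdot}{\cdot}_\rho$ (immediate from the definition of $\delta_k$ together with the fact that $\dd$ and $\delta$ preserve $C_{\rm c}^\infty(\Lambda TM)$) and the handling of $p=1$, where the duality/density step must be replaced by the localisation argument just indicated.
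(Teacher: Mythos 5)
Your proposal is correct and follows essentially the same route as the paper: both verify the standard closability criterion by pairing $D\omega_j$ against smooth compactly supported test forms, moving $D$ onto the test form via the adjoint relations between $\dd_k$ and $\delta_k$ with respect to $\inp{\cdot}{\cdot}_\rho$, and concluding $\eta=0$. Your explicit treatment of the endpoint $p=1$ via localisation is a small refinement of the paper's terser ``by density'' conclusion, but the argument is the same in substance.
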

\begin{proof}
For the reader's convenience we include the easy proof.
Let $(\omega_n)_n$ be a sequence in $C_{\rm c}^\infty(\Lambda TM)$ and suppose 
that 
$\omega_n \to 0$ and $D\omega_n \to \eta$ in $L^p(\Lambda TM, m)$. 
Decomposing along the direct sum we find that $\omega_n^k \to \omega^k$ in 
$L^p(\Lambda^kTM,m)$ for $0\leq k \leq n$ and $\ud_{k-1}\omega_n^{k-1}  + 
\delta_k\omega_n^{k+1} \to \eta^k$ in $L^p(\Lambda^kTM,m)$ for $1 \leq k \leq 
n-1$; for $k=0$ we have $\delta_0\omega_n^1 \to \eta^0$ in 
$L^p(\Lambda^0TM,m)$ and for $k = n$ we have $\ud_{n-1}\omega_n^{n-1} \to 
\eta^n$ in $L^p(\Lambda^nTM,m)$.   

First consider $1 \leq k \leq n-1$, and pick $\phi \in C_{\rm 
c}^\infty(\Lambda^kTM,m)$. 
By H\"older's inequality,
$$
\begin{aligned}
\inp{\eta^k}{\phi}_\rho 	&= \lim_{n\to\infty} 
\inp{\dd_{k-1}\omega_n^{k-1} + 
\delta_k\omega_n^{k+1}}{\phi}_\rho\\
			&= \lim_{n\to\infty} 
\inp{\omega_n^{k-1}}{\delta_{k-1}\phi}_\rho + 
\inp{\omega_n^{k+1}}{\dd_k\phi}_\rho\\
 			&= \inp{0}{\delta_k\phi}_\rho + 
\inp{0}{\dd_k\phi}_\rho\\
			&= 0.
\end{aligned}
$$
This is justified since both $\omega_n^{k+1}$ and $\phi$ are compactly 
supported and therefore belong to $\mathsf{D}_q(\delta_k)$, respectively 
$\mathsf{D}_q(\delta_{k-1})$, with $\frac1p+\frac1q=1$. It follows that $\eta^k 
= 0$ by density. The 
cases $k = 0$ and $k=n$ are treated similarly. We conclude that $\eta^k = 0$ 
for all 
$k$, so $\eta = 0$.
 \end{proof} 
 
With slight abuse of notation we will denote the closure 
again by $D$ and write ${\mathsf D}_p(D)$ for its 
domain in $L^p(\Lambda TM, m)$.
The main result of this section asserts that, under Hypothesis
\ref{curvaturecondition},
for all $1 < p 
< \infty$ the operator $D$ is $R$-bisectorial on $L^p(\Lambda TM, m)$ and has a 
bounded $H^\infty$-calculus
on this space.

Since $L_k$ is sectorial on $L^p(\Lambda^kTM,m)$, $1<p<\infty$, 
its square root is well defined and sectorial. Moreover we have 
$ C_{\rm c}^\infty(\Lambda^kTM) \subseteq {\mathsf D}_p(L_k)\subseteq {\mathsf 
D}_p(L_k^{1/2})$
(cf. Lemma \ref{Cc}). 

\begin{lemma}\label{density}
For all $1 < p < \infty$ and $k=0,1,\ldots,n$, 
$C_{\rm c}^\infty(\Lambda^kTM)$ is dense in ${\mathsf D}_p(L_k^{1/2})$.
\end{lemma}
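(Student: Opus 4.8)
The plan is to show that $C_{\rm c}^\infty(\Lambda^kTM)$ is a core for $L_k^{1/2}$ on $L^p(\Lambda^kTM,m)$, i.e.\ that it is dense in $\mathsf{D}_p(L_k^{1/2})$ for the graph norm $\omega\mapsto \|\omega\|_p + \|L_k^{1/2}\omega\|_p$. Since $-L_k$ generates a bounded analytic $C_0$-semigroup $(P_t^k)_{t\ge 0}$ on $L^p(\Lambda^kTM,m)$, the natural strategy is a two-step approximation: first approximate an arbitrary $\omega\in\mathsf{D}_p(L_k^{1/2})$ by smooth-in-time regularisations that land in $\mathsf{D}_p(L_k)$ (hence in a nicer class), then approximate those by compactly supported smooth forms.

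First I would recall that $\mathsf{D}_p(L_k)$ is always a core for $L_k^{1/2}$: for $\omega\in\mathsf{D}_p(L_k^{1/2})$ the regularised forms $\omega_t := \frac1t\int_0^t P_s^k\omega\ud s$ (or equivalently $(I+tL_k)^{-1}\omega$) lie in $\mathsf{D}_p(L_k)$, converge to $\omega$ in $L^p$ as $t\downarrow 0$, and satisfy $L_k^{1/2}\omega_t = \frac1t\int_0^t P_s^k L_k^{1/2}\omega\ud s \to L_k^{1/2}\omega$ in $L^p$, using that $L_k^{1/2}$ commutes with the resolvent/semigroup and is closed. This reduces matters to approximating a given $\omega\in\mathsf{D}_p(L_k)$, in the graph norm of $L_k^{1/2}$, by elements of $C_{\rm c}^\infty(\Lambda^kTM)$; and by Lemma~\ref{Cc} it would even suffice to do this approximation in the stronger graph norm of $L_k$, since $\|L_k^{1/2}\eta\|_p \lesssim \|\eta\|_p + \|L_k\eta\|_p$ by sectoriality (moment inequality / the fact that $\mathsf{D}_p(L_k)\hookrightarrow\mathsf{D}_p(L_k^{1/2})$ continuously).

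Next I would carry out the spatial cut-off and mollification. Starting from $\omega\in\mathsf{D}_p(L_k)$, one first reduces to the case where $\omega$ is itself smooth by a further time-regularisation $P_t^k\omega$; here one uses that the semigroup is ultracontractive/regularising on $k$-forms — this follows from the pointwise domination $|P_t^k\omega|\le P_t^0|\omega|$ of \eqref{eq:ptw} together with the known smoothing properties of the scalar heat semigroup $P_t^0$ under Hypothesis~\ref{curvaturecondition}, or directly from hypoellipticity of $\partial_t + L_k$. Then one multiplies by a sequence of cut-off functions $\chi_j\in C_{\rm c}^\infty(M)$ with $\chi_j\uparrow 1$, $\|\ud\chi_j\|_\infty\to 0$ and $\|L_0\chi_j\|_\infty$ controlled — such exhaustion functions exist on a complete manifold, and under non-negative Bakry--Emery Ricci curvature one can arrange the required decay (a standard construction, as in the completeness/essential self-adjointness arguments of \cite{Bakry, Strichartz, Yoshida}). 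Writing $L_k(\chi_j\omega)$ via the Leibniz-type commutator $[L_k,\chi_j]$, whose terms involve $\ud\chi_j$, $\delta\chi_j$ and $L_0\chi_j$ acting on $\omega$ and its first derivatives, one gets $L_k(\chi_j\omega)\to L_k\omega$ in $L^p$, and $\chi_j\omega\to\omega$ in $L^p$, so $\chi_j\omega\to\omega$ in the $L_k$-graph norm; since $\chi_j\omega\in C_{\rm c}^\infty(\Lambda^kTM)$ this finishes the argument.

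I expect the main obstacle to be the construction and control of the cut-off functions $\chi_j$ — specifically, showing that the commutator terms $[L_k,\chi_j]\omega$ tend to $0$ in $L^p$. This requires not just $\|\ud\chi_j\|_\infty\to 0$ but also uniform $L^p$-control of the first covariant derivatives of $\omega$ appearing in the first-order part of $[L_k,\chi_j]$; the cleanest route is to bound $\|\Nabla\omega\|_p$ in terms of $\|\omega\|_p$ and $\|L_k^{1/2}\omega\|_p$ (a Riesz-transform / Bochner-type estimate available under Hypothesis~\ref{curvaturecondition}, cf.\ \cite{Bakry, Yoshida}), so that the cut-off errors are genuinely controlled by $\|\ud\chi_j\|_\infty$ and $\|L_0\chi_j\|_\infty$ times a fixed quantity. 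Once that uniform derivative bound is in hand, the rest is routine dominated-convergence bookkeeping.
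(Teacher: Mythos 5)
Your argument takes a genuinely different, self-contained route, whereas the paper simply outsources the hard analysis to Bakry: by \cite[Proposition 3.8.2]{ABHN} the graph norm of $L_k^{1/2}$ is equivalent to $\Vert (I+L_k)^{1/2}\cdot\Vert_p$ (via \cite[Lemmas 4.2 and 5.2]{Bakry}), and the proofs of \cite[Corollaries 4.3 and 5.3]{Bakry} already produce, for any $\omega$ in this domain, a sequence $(\omega_n)\subseteq C_{\rm c}^\infty(\Lambda^kTM)$ with $(I+L_k)^{1/2}\omega_n\to(I+L_k)^{1/2}\omega$ in $L^p$. The entire content of the lemma is thus reduced to quoting these results; the cut-off/exhaustion issue you correctly identify as the crux is precisely what is buried in Bakry's proofs.

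As a stand-alone argument your sketch has a genuine gap at that crux. The commutator $[L_k,\chi_j]$ is not controlled by $\Vert\dd\chi_j\Vert_\infty$ and $\Vert L_0\chi_j\Vert_\infty$ in the way you indicate: expanding $\delta(\dd\chi_j\wedge\omega)$ produces the zeroth-order term $(\dd\chi_j\cdot\dd\log\rho)\,\omega$, and $\nabla\log\rho$ is unbounded for the weights of interest (it equals $-x$ for the Ornstein--Uhlenbeck weight, and Hypothesis \ref{curvaturecondition} admits, e.g., $\log\rho=-e^{|x|^2}$ on $\RR^n$), so $\Vert\dd\chi_j\Vert_\infty\to0$ does not make this term small; the $\chi_j$ would have to be adapted to the growth of $\nabla\log\rho$ while simultaneously keeping Hessian control, and the existence of such an exhaustion is exactly what needs proof. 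Relatedly, uniform control of $L_0\chi_j$ (i.e.\ of second derivatives of cut-offs) on a complete manifold requires more than completeness plus a Ricci lower bound and is itself a nontrivial construction. A further point: the first-order part of $[L_k,\chi_j]$ (the terms $\dd(\iota((\dd\chi_j)^*)\omega)$ and $\dd^*(\dd\chi_j\wedge\omega)$) involves the full covariant derivative $\Nabla\omega$, whereas the Riesz estimates available under Hypothesis \ref{curvaturecondition} control only $\Vert\dd_k\omega\Vert_p+\Vert\delta_{k-1}\omega\Vert_p$; a bound on $\Vert\Nabla\omega\Vert_p$ for $k$-forms is strictly stronger and would need separate justification. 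Finally, a minor slip: the pointwise domination \eqref{eq:ptw} yields no regularity for $P_t^k\omega$ (it is only a modulus bound), so the smoothing must come from parabolic hypoellipticity, as in your alternative suggestion. Your first two reductions (that $\mathsf{D}_p(L_k)$ is a core for $L_k^{1/2}$, and the passage to the $L_k$-graph norm via the moment inequality) are correct.
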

\begin{proof}
Pick an arbitrary $\omega \in {\mathsf D}_p(L_k^{1/2})$. By  
\cite[Proposition 3.8.2]{ABHN} we have $\omega \in {\mathsf 
D}_p((I-L_k)^{1/2})$. 
From the proof of 
\cite[Corollaries 4.3 and 5.3]{Bakry} we see that there exists a sequence 
$(\omega_n)_n$ in $C_{\rm c}^\infty(\Lambda^kTM)$ such that $(I 
+L_k)^{1/2}\omega_n \to 
(I +L_k)^{1/2}\omega$ in $L^p(\Lambda^kTM,m)$. By  \cite[Lemma's 4.2 and 
5.2]{Bakry} we then find that
$$
\Vert \omega_n - \omega\Vert _{{\mathsf D}_p(L_k^{1/2})} = \Vert \omega_n - 
\omega\Vert _p + 
\Vert L_k^{1/2}(\omega_n - \omega)\Vert _p \lesssim\Vert (I 
+L_k)^{1/2}(\omega_n - 
\omega)\Vert _p
$$
By the choice of the sequence $\omega_n$  the latter tends to $0$ and 
consequently we have
$\omega_n\to \omega$ in ${\mathsf D}_p(L_k^{1/2})$.
\end{proof}

The following result is essentially a restatement of \cite[Theorem 5.1, Corollary 5.3]{Bakry}
in the presence of non-negative curvature. The results in \cite{Bakry} are stated only for
the case $\rho\equiv 1$ and given in the form of 
inequalities for smooth compactly supported $k$-forms. 

\begin{theorem}[Boundedness of the Riesz transform associated with 
$L_k$]\label{Riesztransform}
Let Hypothesis \ref{curvaturecondition} hold. For all $1 < p < \infty$ and 
$k= 0,1,\ldots,n$
we have $${\mathsf D}_p(L_k^{1/2}) = {\mathsf D}_p(\dd_k + \delta_{k-1}),$$ 
and for all $\omega$ in this common domain we have
$$
\Vert L_k^{1/2}\omega\Vert _p\simeq_{p,k}\Vert (\dd_k + \delta_{k-1})\omega 
\Vert _p  .
$$
\end{theorem}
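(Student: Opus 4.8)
The plan is to derive Theorem~\ref{Riesztransform} from the results already in hand, principally Theorem~\ref{thm:HinftyLk} (bounded $H^\infty$-calculus for $L_k$) together with the Bochner--Lichn\'erowicz--Weitzenb\"ock identity \eqref{eq:BLW2} and the non-negativity Hypothesis~\ref{curvaturecondition}. I would work first on the dense subspace $C_{\rm c}^\infty(\Lambda^kTM)$, establish the norm equivalence $\Vert L_k^{1/2}\omega\Vert_p \eqsim_{p,k} \Vert(\dd_k+\delta_{k-1})\omega\Vert_p$ there, and only afterwards promote it to the stated equality of domains by a density/closedness argument.

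The lower bound $\Vert(\dd_k+\delta_{k-1})\omega\Vert_p \lesssim \Vert L_k^{1/2}\omega\Vert_p$ is the heart of the matter and is where I expect the main obstacle. For a smooth compactly supported $k$-form $\omega$ one has, since $\dd_k$ and $\delta_{k-1}$ have orthogonal ranges in $L^2$ and $L_k = \delta_k\dd_k + \dd_{k-1}\delta_{k-1}$, the $L^2$-identity $\Vert L_k^{1/2}\omega\Vert_2^2 = \Vert\dd_k\omega\Vert_2^2 + \Vert\delta_{k-1}\omega\Vert_2^2$. To get the $L^p$-version one cannot square; instead I would follow Bakry's scheme: write $(\dd_k+\delta_{k-1})\omega$ as a vector-valued Riesz transform $(\dd_k+\delta_{k-1})L_k^{-1/2}$ applied to $L_k^{1/2}\omega$, and control it via the heat-semigroup/square-function machinery. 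Concretely, using $\dd_k P_t^k = P_t^{k+1}\dd_k$ and $\delta_{k-1}P_t^k = P_t^{k-1}\delta_{k-1}$ on smooth compactly supported forms (these intertwining relations follow from the definition $L = D^2$ and $\dd^2=\delta^2=0$), one represents $(\dd_k+\delta_{k-1})\omega$ through a Littlewood--Paley integral $\int_0^\infty \psi(tL_{k\pm1})\big((\dd_k+\delta_{k-1})\,\phi(tL_k)L_k^{1/2}\omega\big)\,\dd t/t$ with suitable $\psi,\phi\in H^1\cap H^\infty$, and then estimates this by combining: (i) the square-function upper bound for $L_{k\pm1}$ coming from the bounded $H^\infty$-calculus of $L_{k\pm1}$ (Theorem~\ref{thm:HinftyLk}, valid because Hypothesis~\ref{curvaturecondition} also controls $Q_{k-1}$ and $Q_{k+1}$), (ii) the pointwise Weitzenb\"ock bound, which after integrating \eqref{eq:BLW2} against a test function and using $Q_k\ge 0$ yields $|\Nabla\omega|^2 \le \omega\cdot L_k\omega$ in the appropriate integrated sense, giving gradient control by $L_k$, and hence $|(\dd_k+\delta_{k-1})\omega| \lesssim |\Nabla\omega|$ pointwise since $\dd$ and $\delta$ are first-order operators built from $\Nabla$ and (for $\delta$) the vector field $(\dd\log\rho)^*$ via Lemma~\ref{generaldiv} --- the $\rho$-dependent zeroth-order term being harmless on compactly supported forms, and (iii) the lower square-function bound for $L_k$ itself, i.e. the first inequality in \eqref{eq:sqf-L}. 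The reverse inequality $\Vert L_k^{1/2}\omega\Vert_p \lesssim \Vert(\dd_k+\delta_{k-1})\omega\Vert_p$ is the ``easy direction'': one writes $L_k^{1/2}\omega = \mathrm{sgn}$-type operator applied to $D$, or more elementarily uses that $L_k^{1/2} = c\int_0^\infty L_k e^{-tL_k}\,t^{-1/2}\dd t$, factors $L_k e^{-tL_k} = (\delta_k\dd_k+\dd_{k-1}\delta_{k-1})e^{-tL_k}$, pushes one copy of $\dd_k$ (resp.\ $\delta_{k-1}$) onto $\omega$ using the intertwining relations, and bounds what remains by the $H^\infty$-calculus square-function estimates for $L_{k\pm1}$.

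Having the equivalence on $C_{\rm c}^\infty(\Lambda^kTM)$, the identification of domains proceeds as follows. The inclusion ${\mathsf D}_p(L_k^{1/2}) \subseteq {\mathsf D}_p(\dd_k+\delta_{k-1})$: given $\omega\in{\mathsf D}_p(L_k^{1/2})$, Lemma~\ref{density} supplies $\omega_n\in C_{\rm c}^\infty(\Lambda^kTM)$ with $\omega_n\to\omega$ in the graph norm of $L_k^{1/2}$; by the lower bound $(\dd_k+\delta_{k-1})\omega_n$ is Cauchy in $L^p$, so $\omega\in{\mathsf D}_p(\dd_k+\delta_{k-1})$ (using closedness of $\dd_k+\delta_{k-1}$, which follows as in Lemma~\ref{hodgediracclosable}) and the inequality passes to the limit. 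For the converse inclusion ${\mathsf D}_p(\dd_k+\delta_{k-1})\subseteq{\mathsf D}_p(L_k^{1/2})$ I would invoke that $C_{\rm c}^\infty(\Lambda^kTM)$ is a core for $\dd_k+\delta_{k-1}$ --- this is exactly the content of \cite[Corollaries 4.3 and 5.3]{Bakry} (in the form adapted to general $\rho$ as noted at the start of this section, and already used in the proof of Lemma~\ref{density}) --- and then run the same Cauchy-sequence argument using the reverse inequality. Combining the two inclusions and letting $n\to\infty$ in the two-sided estimate yields the claimed equality of domains and the equivalence $\Vert L_k^{1/2}\omega\Vert_p \eqsim_{p,k} \Vert(\dd_k+\delta_{k-1})\omega\Vert_p$ for all $\omega$ in the common domain. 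The genuinely delicate point throughout is step (ii): making the pointwise Weitzenb\"ock inequality $|\Nabla\omega|^2 \le \omega\cdot L_k\omega + \tfrac12 L_0|\omega|^2$ usable in $L^p$ for $p\ne 2$ --- this is precisely where Bakry's argument (integration against $|\omega|^{p-2}\omega$, Kato's inequality, and the positivity of $P_t^0$) is needed, and where one must be careful that all the zeroth-order $\rho$-terms collected in $Q_k$ carry the correct sign under Hypothesis~\ref{curvaturecondition}.
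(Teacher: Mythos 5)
The outer layer of your argument --- establish the two-sided estimate on $C_{\rm c}^\infty(\Lambda^kTM)$, then transfer it to the full domains using Lemma \ref{density}, the fact that $C_{\rm c}^\infty(\Lambda^kTM)$ is a core for $\dd_k+\delta_{k-1}$, and closedness --- is exactly what the paper does. But the paper does \emph{not} reprove the core inequalities for smooth compactly supported forms: it quotes them directly from \cite[Theorem 5.1]{Bakry} and \cite[Corollary 5.3]{Bakry}, which (as explained at the start of Section \ref{sec:HodgeDirac}) carry over to general $\rho$ once \eqref{eq:BLW} is replaced by \eqref{eq:BLW2} and Hypothesis \ref{curvaturecondition} is imposed. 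Your proposal instead tries to rebuild these inequalities from Theorem \ref{thm:HinftyLk} and square functions, and it is in that reconstruction that there is a genuine gap.

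Concretely, your step (ii) asserts $|(\dd_k+\delta_{k-1})\omega|\lesssim|\Nabla\omega|$ pointwise, with ``the $\rho$-dependent zeroth-order term being harmless on compactly supported forms.'' This is false. By Lemma \ref{generaldiv}, $\delta_{k-1}\omega=\dd_{k-1}^*\omega-\iota((\dd\log\rho)^*)\omega$, and the second term has an unbounded coefficient in the cases of interest (for the Ornstein--Uhlenbeck weight $\rho(x)=e^{-|x|^2/2}$ it is essentially $\iota(x)\omega$). Compact support of $\omega$ does not help: translating a fixed bump form to infinity makes $\Vert\,|x|\,\omega\Vert_p$ blow up while $\Vert\Nabla\omega\Vert_p$ stays bounded, so no uniform constant exists. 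Controlling precisely this first-order-plus-unbounded-potential operator by $\Vert L_k^{1/2}\omega\Vert_p$ is the Meyer-type content of Bakry's theorem, and his proof never passes through a pointwise domination by $|\Nabla\omega|$; it works with $\dd\omega$ and $\delta\omega$ separately as $(k\pm1)$-forms via subordination and the semigroup domination \eqref{eq:ptw}. Relatedly, your ``easy direction'' is not easy by the route you describe: after pushing $\dd_k$ and $\delta_{k-1}$ onto $\omega$ you are left with the operators $\delta_kL_{k+1}^{-1/2}$ and $\dd_{k-1}L_{k-1}^{-1/2}$, i.e.\ the hard Riesz-transform bounds on adjacent degrees (in Bakry this direction is obtained by duality from the forward estimate at the conjugate exponent). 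So your proof is correct only modulo re-importing Bakry's Theorem 5.1 and Corollary 5.3 --- which is precisely what the paper does explicitly.
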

Here, $D_k := \dd_k + \delta_{k-1}$ is the restriction 
of $D$ as a densely defined operator acting from $L^p(\Lambda^kTM,m)$ into 
$L^p(\Lambda TM,m)$.
\begin{proof}
We start by showing that ${\mathsf D}_p(L_k^{1/2}) \subseteq {\mathsf 
D}_p(\dd_k + \delta_{k-1})$ 
together with the estimate 
$$
\Vert (\dd_k + \delta_{k-1})\omega \Vert _p \lesssim_{p,k} \Vert 
L_k^{1/2}\omega\Vert _p.
$$
 
Pick an arbitrary $\omega \in {\mathsf D}_p(L_k^{1/2})$. As 
$C_{\rm c}^\infty(\Lambda^kTM)$ is dense in ${\mathsf D}_p(L_k^{1/2})$ by Lemma 
\ref{density}, we can find a sequence $(\omega_i)_i$ of $k$-forms in this space 
converging 
to $\omega$ in ${\mathsf D}_p(L_k^{1/2})$. By \cite[Theorem 5.1]{Bakry} we then 
find, 
for all $i,j$, 
\begin{align*}
& \Vert \omega_i - \omega_j\Vert _p + \Vert (\dd_k + \delta_{k-1})(\omega_i - 
\omega_j)\Vert _p\\ 	
& \qquad \lesssim \Vert \omega_i - \omega_j\Vert _p + \Vert \dd_k\omega_i - 
\dd_k\omega_j\Vert _p + 
\Vert \delta_{k-1}\omega_i - \delta_{k-1}\omega_j\Vert _p\\
&\qquad \lesssim \Vert \omega_i - \omega_j\Vert _p + \Vert L_k^{1/2}\omega_i - 
L_k^{1/2}\omega_j\Vert _p
\end{align*}
which shows that $(\omega_i)_i$ is Cauchy in ${\mathsf D}_p(\dd_k + 
\delta_{k-1})$. 
By the closedness of $\ud_k + 
\delta_{k-1}$ this sequence converges to some $\eta \in 
{\mathsf D}_p(\dd_k + \delta_{k-1})$. 
Since both 
${\mathsf D}_p(L_k^{1/2})$ and ${\mathsf D}_p(\dd_k + \delta_{k-1})$ are 
continuously embedded 
into $L^p(\Lambda^kTM,m)$ we have $\omega_i 
\to \omega$ and $\omega_i \to \eta$ in $L^p(\Lambda^kTM,m)$, and therefore 
$\eta = \omega$. 
This shows that $\omega \in {\mathsf D}_p(\dd_k + \delta_{k-1})$.
To prove the estimate, by \cite[Theorem 5.1]{Bakry} we obtain, for all $i$,
$$
\Vert (\dd_k + \delta_{k-1})\omega_i\Vert _p \leq \Vert \dd_k\omega_i\Vert _p + 
\Vert \delta_{k-1}\omega_i\Vert _p \leq C_{p,k}\Vert L_k^{1/2}\omega_i\Vert _p.
$$
Since $\omega_i \to \omega$ both in 
${\mathsf D}_p(L_k^{1/2})$ and ${\mathsf D}_p(\dd_k + \delta_{k-1})$, it 
follows that
$$
\Vert (\dd_k + \delta_{k-1})\omega\Vert _p \leq C_{p,k}\Vert 
L_k^{1/2}\omega\Vert _p.
$$

The reverse inclusion and estimate may be proved in a similar manner. Now
one uses that $C_{\rm c}^\infty(\Lambda^kTM)$ is dense in ${\mathsf D}_p(\dd_k 
+ \delta_{k-1})$, 
$\dd_k + \delta_{k-1}$ being the closure of its restriction to $C_{\rm 
c}^\infty(\Lambda^kTM)$.
One furthermore uses the estimate in 
\cite[Corollary 5.3]{Bakry} which holds (with $e = 0$ in the notation of 
\cite{Bakry}) by Hypothesis
\ref{curvaturecondition}. Finally, by definition of the norm on $L^p(\Lambda 
TM,m)$, for all $\omega \in C_{\rm c}^\infty(\Lambda^kTM)$ we have 
\begin{equation}\label{eq:dd} \Vert\dd_k\omega\Vert _p + 
\Vert \delta_{k-1}\omega\Vert _p \eqsim_p \Vert (\dd_k + 
\delta_{k-1})\omega\Vert _p
\end{equation}
noting that $\ud_k \omega\in C_{\rm c}^\infty(\Lambda^{k+1}TM)$ and 
$\delta_{k-1}\omega\in C_{\rm c}^\infty(\Lambda^{k-1}TM)$.
\end{proof}

Our proof of the $R$-bisectoriality of $D$ will be based on $R$-gradient bounds 
to which we turn next. 
We begin with a lemma.

\begin{lemma}\label{lem:intersec}
For all $1 < p < \infty$ and $k=0,1,\ldots,n$ we have 
${\mathsf D}_p(L_k^{1/2})\subseteq {\mathsf D}_p(\dd_k)\cap {\mathsf 
D}_p(\delta_{k-1})$.
\end{lemma}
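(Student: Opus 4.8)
The plan is to deduce this from Theorem~\ref{Riesztransform} by observing that membership in $\mathsf{D}_p(\dd_k+\delta_{k-1})$ can be split into membership in $\mathsf{D}_p(\dd_k)$ and in $\mathsf{D}_p(\delta_{k-1})$ separately, because $\dd_k$ and $\delta_{k-1}$ take values in complementary summands of the direct sum $L^p(\Lambda TM,m)=\bigoplus_{j}L^p(\Lambda^jTM,m)$. Concretely, fix $1<p<\infty$ and $k$, and pick $\omega\in\mathsf{D}_p(L_k^{1/2})$. By Theorem~\ref{Riesztransform}, $\omega\in\mathsf{D}_p(\dd_k+\delta_{k-1})$, so by Lemma~\ref{density} (density of $C_{\rm c}^\infty(\Lambda^kTM)$ in $\mathsf{D}_p(L_k^{1/2})$, together with the estimate in Theorem~\ref{Riesztransform}) there is a sequence $(\omega_i)_i$ in $C_{\rm c}^\infty(\Lambda^kTM)$ with $\omega_i\to\omega$ in $\mathsf{D}_p(\dd_k+\delta_{k-1})$, i.e.\ $\omega_i\to\omega$ and $(\dd_k+\delta_{k-1})\omega_i\to(\dd_k+\delta_{k-1})\omega$ in $L^p$.

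Next I would use that for smooth compactly supported $k$-forms the quantities $\dd_k\omega_i$ and $\delta_{k-1}\omega_i$ live in $C_{\rm c}^\infty(\Lambda^{k+1}TM)$ and $C_{\rm c}^\infty(\Lambda^{k-1}TM)$ respectively, hence in complementary direct summands; consequently $(\dd_k+\delta_{k-1})\omega_i$ converging in $L^p(\Lambda TM,m)$ forces both components to converge, and in fact by \eqref{eq:dd} we have the two-sided bound $\|\dd_k\omega_i\|_p+\|\delta_{k-1}\omega_i\|_p\eqsim_p\|(\dd_k+\delta_{k-1})\omega_i\|_p$. Applying this to $\omega_i-\omega_j$ shows that $(\dd_k\omega_i)_i$ and $(\delta_{k-1}\omega_i)_i$ are each Cauchy in $L^p$. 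Since $\omega_i\to\omega$ in $L^p$ and both $\dd_k$ and $\delta_{k-1}$ are closed (being $L^2$-closures that extend consistently, as set up earlier in the paper), it follows that $\omega\in\mathsf{D}_p(\dd_k)\cap\mathsf{D}_p(\delta_{k-1})$, with $\dd_k\omega_i\to\dd_k\omega$ and $\delta_{k-1}\omega_i\to\delta_{k-1}\omega$.

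One point needing a little care is the justification of \eqref{eq:dd} itself and the closedness of $\dd_k$ and $\delta_{k-1}$ as operators on $L^p$: the paper has introduced $\dd_k$, $\delta_k$ only as $L^2$-closures, so strictly one should note that $\dd_k$ (resp.\ $\delta_{k-1}$) is closable on $L^p$ by the same duality argument used in Lemma~\ref{hodgediracclosable}, with closure again denoted $\dd_k$ (resp.\ $\delta_{k-1}$), and that $C_{\rm c}^\infty(\Lambda^kTM)$ is a core. This is the step I expect to be the only real obstacle, and it is essentially a repackaging of arguments already present in the excerpt; it may in fact be cleanest to fold the $L^p$-closability of $\dd_k$ and $\delta_{k-1}$ into the statement of Theorem~\ref{Riesztransform} and simply cite it. With that in hand, the lemma follows immediately, and as a by-product one also records the estimate $\|\dd_k\omega\|_p+\|\delta_{k-1}\omega\|_p\lesssim_{p,k}\|L_k^{1/2}\omega\|_p$ for all $\omega\in\mathsf{D}_p(L_k^{1/2})$, which will presumably be what is needed for the subsequent $R$-gradient bounds.
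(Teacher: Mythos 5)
Your argument is correct and essentially the one the paper gives: approximate $\omega$ by smooth compactly supported forms using Lemma~\ref{density}, deduce that $(\dd_k\omega_i)_i$ and $(\delta_{k-1}\omega_i)_i$ are separately Cauchy, and conclude by closedness of $\dd_k$ and $\delta_{k-1}$ on $L^p$. The only cosmetic difference is that the paper obtains the separate estimates $\Vert\dd_k\omega_i\Vert_p,\Vert\delta_{k-1}\omega_i\Vert_p\lesssim\Vert L_k^{1/2}\omega_i\Vert_p$ by citing Bakry's Theorem~5.1 directly, whereas you recover them from the combined estimate of Theorem~\ref{Riesztransform} together with the direct-sum splitting \eqref{eq:dd}; both routes rest on the same ingredients.
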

\begin{proof}
Pick $\omega \in {\mathsf D}_p(L_k^{1/2})$ arbitrarily. As 
$C_{\rm c}^\infty(\Lambda^kTM)$ is dense in ${\mathsf D}_p(L_k^{1/2})$ by Lemma 
\ref{density}, 
we can find a sequence $(\omega_i)_i$ of $k$-forms in this space converging to 
$\omega$ in 
${\mathsf D}_p(L_k^{1/2})$. By \cite[Theorem 5.1]{Bakry} we then find, for all 
$i,j$, 
\begin{align}\label{eq:est-ddk}
\Vert \omega_i - \omega_j\Vert _p + \Vert \dd_k\omega_i - \dd_k\omega_j\Vert _p 
\lesssim 
\Vert \omega_i - \omega_j\Vert _p + \Vert L_k^{1/2}\omega_i - 
L_k^{1/2}\omega_j\Vert _p
\end{align}
which shows that $(\omega_i)_i$ is Cauchy in ${\mathsf D}_p(\dd_k)$. By the 
closedness of 
$\ud_k$ we then find that this sequence converges to some $\eta \in {\mathsf 
D}_p(\dd_k)$. 
As in the proof of Theorem \ref{Riesztransform} we show that $\omega = \eta$. 
It follows that $\omega \in {\mathsf D}_p(\dd_k)$.

This proves the inclusion ${\mathsf D}_p(L_k^{1/2}) \subseteq {\mathsf 
D}_p(\dd_k)$.
The inclusion ${\mathsf D}_p(L_k^{1/2}) \subseteq {\mathsf D}_p(\delta_k)$ is 
proved in the same way.
\end{proof}

Thanks to the lemma, the operators
$$
\dd_kL_k^{-1/2}: \mathsf{R}_p(L_k^{1/2}) \to \mathsf{R}_p(\dd_k), \quad 
L_k^{1/2}\omega 
\mapsto \dd_k\omega
$$
and
$$
\delta_{k-1}L_k^{-1/2}: \mathsf{R}_p(L_k^{1/2}) \to \mathsf{R}_p(\delta_{k-1}), 
\quad 
L_k^{1/2}\omega \mapsto \delta_{k-1}\omega
$$
are well defined, and by Theorem \ref{Riesztransform} combined with the 
equivalence of norms \eqref{eq:dd} 
they are in fact $L^p$-bounded.

It also follows from the lemma that the operators $\ud_k(I + t^2L_k)^{-1}$ and 
$\delta_{k-1}(I + t^2L_k)^{-1}$ are well defined and $L^p$-bounded for all $t 
\in \RR$; indeed,
just note that ${\mathsf D}_p(L_k) \subseteq {\mathsf D}_p(L_k^{1/2})\subseteq 
{\mathsf D}_p(\dd_k) \cap {\mathsf D}_p(\delta_{k-1})$. The next proposition 
asserts that these operators
form an $R$-bounded family:

\begin{proposition}[$R$-gradient bounds]\label{gradientbounds}
Let Hypothesis \ref{curvaturecondition} hold. For all $1 < p < \infty$ and $k =0,1,\ldots,n$  the families of operators
$$
\{t\dd_k(I + t^2L_k)^{-1} :  t > 0\}
$$
and
$$
\{t\delta_{k-1}(I + t^2L_k)^{-1} :  t > 0\}
$$
are both $R$-bounded.
\end{proposition}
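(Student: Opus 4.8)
The plan is to deduce the $R$-gradient bounds from the $R$-sectoriality of $L_k$ (Proposition \ref{prop:Rsectorial}) together with the boundedness of the Riesz transforms $\dd_kL_k^{-1/2}$ and $\delta_{k-1}L_k^{-1/2}$ (Theorem \ref{Riesztransform} and \eqref{eq:dd}). The key algebraic observation is the factorisation
\begin{equation*}
t\dd_k(I+t^2L_k)^{-1} = \bigl(\dd_k L_k^{-1/2}\bigr)\cdot \bigl(tL_k^{1/2}(I+t^2L_k)^{-1}\bigr),
\end{equation*}
valid on $\mathsf{R}_p(L_k^{1/2})$, and similarly with $\delta_{k-1}$ in place of $\dd_k$. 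The first factor is a single bounded operator, hence trivially gives an $R$-bounded (one-element) family; the second factor is the family $\{\phi(t^2L_k): t>0\}$ with $\phi(z)=z^{1/2}(1+z)^{-1}$.

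First I would record that $\phi(z)=\sqrt z/(1+z)$ is a bounded holomorphic function on every sector $\Sigma_\sigma^+$ with $\sigma<\pi$, and that the family $\{\psi_t: t>0\}$ with $\psi_t(z):=\phi(t^2z) = t\sqrt z/(1+t^2z)$ is uniformly bounded in $H^\infty(\Sigma_\sigma^+)$; in fact $\sup_{t>0}\|\psi_t\|_{H^\infty(\Sigma_\sigma^+)}<\infty$. Since $L_k$ is $R$-sectorial of angle $\omega_R^+(L_k)<\tfrac12\pi$, a standard consequence of $R$-sectoriality (see \cite[Chapter 10]{HNVW2}, or the abstract fact that resolvent families parametrised by a bounded set of rational functions with poles off the sector are $R$-bounded) is that the family $\{tL_k^{1/2}(I+t^2L_k)^{-1}: t>0\} = \{\psi_t(L_k): t>0\}$ is $R$-bounded on $L^p(\Lambda^kTM,m)$. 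Concretely one writes $tL_k^{1/2}(I+t^2L_k)^{-1}$ via the functional calculus as a contour integral of $(\lambda-L_k)^{-1}$ against a kernel that is integrable uniformly in $t$ after the scaling $\lambda\mapsto t^{-2}\lambda$, and then invokes the $R$-boundedness of $\{\lambda(\lambda-L_k)^{-1}:\lambda\notin\overline{\Sigma_\vartheta^+}\}$ together with the fact that integral averages (in the sense of Bochner integrals against a fixed finite measure) preserve $R$-bounds.

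Next I would combine the two factors: if $\mathscr S$ is $R$-bounded and $T$ is a fixed bounded operator, then $\{TS: S\in\mathscr S\}$ is $R$-bounded with $R$-bound at most $\|T\|$ times that of $\mathscr S$. Applying this with $T=\dd_kL_k^{-1/2}$ (bounded by Theorem \ref{Riesztransform} and \eqref{eq:dd}) and $\mathscr S=\{tL_k^{1/2}(I+t^2L_k)^{-1}:t>0\}$ yields the $R$-boundedness of $\{t\dd_k(I+t^2L_k)^{-1}:t>0\}$ on $\mathsf{R}_p(L_k^{1/2})$; since $L_k$ is sectorial on the reflexive space $L^p(\Lambda^kTM,m)$ we have the splitting $L^p(\Lambda^kTM,m)=\Ker(L_k)\oplus\overline{\mathsf{R}_p(L_k)}$ (Remark \ref{rem}), and on $\Ker(L_k)\subseteq\Ker(\dd_k)\cap\Ker(\delta_{k-1})$ both operator families vanish, so the bound extends to all of $L^p(\Lambda^kTM,m)$. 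The same argument with $T=\delta_{k-1}L_k^{-1/2}$ handles the second family. The main obstacle is the careful justification that $\{\psi_t(L_k):t>0\}$ is $R$-bounded — that is, promoting the $H^\infty$-functional calculus / $R$-sectoriality of $L_k$ to an $R$-bound for this particular one-parameter family — and checking that the Riesz transform operators are genuinely bounded \emph{into} the right space so that the composition makes sense on the dense range; the density statements of Lemmas \ref{density} and \ref{lem:intersec} are what make the factorisation legitimate on a dense subspace, after which boundedness gives it everywhere.
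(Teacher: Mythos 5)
Your proposal is correct and follows essentially the same route as the paper: the factorisation $t\dd_k(I+t^2L_k)^{-1}=(\dd_kL_k^{-1/2})\,\psi(t^2L_k)$ with $\psi(z)=\sqrt z/(1+z)$, the boundedness of the Riesz transform from Theorem \ref{Riesztransform}, and the Kalton--Weis result that $\{\psi(t^2L_k):t>0\}$ is $R$-bounded for an $R$-sectorial operator. Your extra care about the kernel--range splitting and the dense-range justification is a welcome refinement of details the paper leaves implicit, but it is not a different argument.
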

\begin{proof}
We will only prove that the first set is $R$-bounded. The $R$-boundedness of 
the other set if proved in 
exactly the same way.

For $t > 0$, standard functional calculus arguments show that 
$$
\begin{aligned}
t\dd_k(I + t^2L_k)^{-1} 	&= 
(\dd_kL_k^{-1/2})((t^2L_k)^{1/2}(I+t^2L_k)^{-1})\\
				&= (\dd_kL_k^{-1/2})(\psi(t^2L_k)),
\end{aligned}
$$
where $\psi(z) = \frac{\sqrt{z}}{1 + z}$. Observe that $\psi \in 
H^1(\Sigma_\vartheta^+)\cap H^\infty(\Sigma_\vartheta^+)$ for any $\vartheta 
\in (0,\frac12\pi)$. 
By a result of \cite{KalWei2} (see also \cite[Chapter 12]{KW}) the set 
$$\{\psi(t^2L_k)\,:t > 0\}$$ is $R$-bounded in 
$\Ll(L^p(\Lambda^kTM,m))$. Since $\ud_kL_k^{-1/2}$ is  bounded, 
it follows that the set $$\{(\dd_kL_k^{-1/2})(\psi(t^2L_k)):\, t> 
0\}$$ is $R$-bounded in $\Ll(L^p(\Lambda^kTM,m),L^p(\Lambda^{k+1}TM,m))$. This 
concludes the proof.
\end{proof}

In order to prove the $R$-bisectoriality of the Hodge--Dirac operator we need 
one 
more lemma, which concerns commutativity rules used in the computation of the 
resolvents of the Hodge--Dirac operator.

\begin{lemma}\label{commutativity}
For all $1 \leq p < \infty$, $k=0,1,\ldots,n$, and $t>0$ the 
following identities hold on ${\mathsf D}_p(\dd_k)$ and ${\mathsf 
D}_p(\delta_k)$ respectively:
$$
(I + t^2L_{k+1})^{-1}\dd_k = \dd_k(I + t^2L_k)^{-1}
$$
and
$$
(I + t^2L_k)^{-1}\delta_k = \delta_k(I + t^2L_{k+1})^{-1}.
$$
Similar identities hold with $(I + t^2L_{k+1})^{-1}$ replaced by $(I + 
t^2L_{k+1})^{-1/2}$ or $P_t^k$.
\end{lemma}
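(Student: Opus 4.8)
\textbf{Proof plan for Lemma \ref{commutativity}.}
The plan is to exploit the fact that on $C_{\rm c}^\infty(\Lambda^kTM)$ the operators $\dd_k$ and $L_k$, $L_{k+1}$ are genuine differential operators for which the commutation $L_{k+1}\dd_k = \dd_k L_k$ holds as an algebraic identity: indeed, from $\dd^2=0$ one has $L_{k+1}\dd_k = (\dd_k\delta_k + \delta_{k+1}\dd_{k+1})\dd_k = \dd_k\delta_k\dd_k = \dd_k(\delta_k\dd_k + \dd_{k-1}\delta_{k-1}) = \dd_k L_k$ on compactly supported smooth forms. First I would record this identity, and then propagate it to the resolvents by the usual argument: for $\omega\in C_{\rm c}^\infty(\Lambda^kTM)$ one has $\omega = (I+t^2L_k)u$ with $u = (I+t^2L_k)^{-1}\omega$, and applying $\dd_k$ and using $\dd_k(I+t^2L_k)u = (I+t^2L_{k+1})\dd_k u$ gives, after applying $(I+t^2L_{k+1})^{-1}$, the desired identity on such $\omega$; density of $C_{\rm c}^\infty(\Lambda^kTM)$ in ${\mathsf D}_p(\dd_k)$ (with respect to the graph norm), together with the boundedness of all operators involved, extends it to ${\mathsf D}_p(\dd_k)$. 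The identity for $\delta_k$ follows in the same way from $L_k\delta_k = \delta_k L_{k+1}$ on compactly supported smooth forms, or simply by taking $L^2$-adjoints of the first identity when $p=2$ and then using consistency of the resolvents across $L^p$-scales.

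The technical subtlety is that $\dd_k u = \dd_k(I+t^2L_k)^{-1}\omega$ need not a priori be smooth or compactly supported even if $\omega$ is, so one cannot directly apply the algebraic commutation to $u$. To handle this I would argue as follows. Work first in $L^2$: there $L_k$ is self-adjoint, so the algebraic identity $L_{k+1}\dd_k = \dd_k L_k$ on $C_{\rm c}^\infty$ upgrades, via the spectral theorem and a standard core argument, to the operator identity $\dd_k$ maps ${\mathsf D}_2(L_k)$ into ${\mathsf D}_2(L_{k+1})$ with $L_{k+1}\dd_k = \dd_k L_k$ there; equivalently, the bounded operators $(I+t^2L_{k+1})^{-1}$ and $\dd_k$ satisfy the intertwining identity on all of ${\mathsf D}_2(\dd_k)$, because one can also view it through the functional calculus $\phi(L_{k+1})\dd_k = \dd_k\phi(L_k)$ applied to $\phi(z)=(1+t^2z)^{-1}$, once the commutation of $\dd_k$ with the semigroups $P_t^k$, $P_t^{k+1}$ has been established. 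That semigroup commutation, $P_t^{k+1}\dd_k = \dd_k P_t^k$, is itself classical and can be derived from the algebraic commutation of generators on a common core via the uniqueness of solutions to the associated abstract Cauchy problems.

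Having the identities in $L^2$, I would pass to general $p\in[1,\infty)$ using the consistency of the semigroups $(P_t^k)_{t\ge0}$ on $L^p\cap L^2$ recorded earlier in Section \ref{sec:WittenLaplacian}: the resolvents $(I+t^2L_k)^{-1}$ on $L^p$ and on $L^2$ agree on $L^p\cap L^2$, the operator $\dd_k$ on $L^p$ is the closure of its restriction to $C_{\rm c}^\infty(\Lambda^kTM)\subseteq L^p\cap L^2$, and by Lemma \ref{lem:intersec} (or directly by the boundedness of $\dd_k(I+t^2L_k)^{-1}$ noted after its proof) all four compositions in each claimed identity are bounded operators on $L^p$. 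Since two bounded operators on $L^p$ that agree on the dense subspace $C_{\rm c}^\infty(\Lambda^kTM)$ coincide, the $L^2$ identities transfer to $L^p$, first on $C_{\rm c}^\infty(\Lambda^kTM)$ and then, by density in the graph norm of $\dd_k$ (respectively $\delta_k$), on all of ${\mathsf D}_p(\dd_k)$ (respectively ${\mathsf D}_p(\delta_k)$). The assertions for $(I+t^2L_{k+1})^{-1/2}$ and for $P_t^k$ then follow by exactly the same pattern, using respectively the representation $(I+t^2L_k)^{-1/2} = c\int_0^\infty (I+t^2L_k+s)^{-1}s^{-1/2}\ud s$ (which reduces the claim to the resolvent case already proved) and the commutation $P_t^{k+1}\dd_k = \dd_k P_t^k$ discussed above. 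The main obstacle, and the only point requiring genuine care rather than bookkeeping, is the initial upgrade from the algebraic commutation on $C_{\rm c}^\infty$ to the operator-level intertwining in $L^2$, i.e. verifying that $C_{\rm c}^\infty(\Lambda^kTM)$ is a core for the relevant domains in a way compatible with applying $\dd_k$; everything after that is routine density and consistency.
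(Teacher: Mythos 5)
Your overall architecture matches the paper's: establish the commutation on $C_{\rm c}^\infty(\Lambda^kTM)$ at the level of the heat semigroups, extend it to all of ${\mathsf D}_p(\dd_k)$ by approximating in the graph norm and invoking the closedness of $\dd_k$ together with the boundedness of $P_t^k$ and $P_t^{k+1}$, and then pass to the resolvents by Laplace transform and to $(I+t^2L_k)^{-1/2}$ by subordination. The one substantive difference is the provenance of the key input $P_t^{k+1}\dd_k\omega=\dd_kP_t^k\omega$ for $\omega\in C_{\rm c}^\infty(\Lambda^kTM)$: the paper simply cites \cite{Bakry} for this identity, whereas you propose to derive it from the algebraic relation $L_{k+1}\dd_k=\dd_kL_k$ on $C_{\rm c}^\infty$.

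That derivation is where your plan has a genuine gap, and it is precisely the circularity you yourself diagnose for the resolvent. To show that $t\mapsto\dd_kP_t^k\omega$ solves the abstract Cauchy problem for $-L_{k+1}$ you must justify $\frac{\dd}{\dd t}\,\dd_kP_t^k\omega=-L_{k+1}\dd_kP_t^k\omega$, i.e.\ apply the commutation of the generators to $P_t^k\omega$ --- which is no longer compactly supported, so the algebraic identity on $C_{\rm c}^\infty$ does not reach it. Appealing to ``the spectral theorem and a standard core argument'' does not close this: knowing that $C_{\rm c}^\infty$ is a core for $L_k$ does not by itself allow you to commute the unbounded operator $\dd_k$ past the limits involved. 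The clean self-contained route in $L^2$ is to use that $D=\dd+\delta$ is essentially self-adjoint on $C_{\rm c}^\infty(\Lambda TM)$ (Gaffney--Chernoff, valid on complete manifolds), so that $L=D^2$ and the spectral theorem for $D$ gives $e^{-tD^2}D\subseteq De^{-tD^2}$; projecting onto form degrees and using $\dd^2=\delta^2=0$ then yields $P_t^{k+1}\dd_k\subseteq\dd_kP_t^k$. If you prefer not to invoke that theorem, you should, as the paper does, take the commutation on $C_{\rm c}^\infty$ from \cite{Bakry} as given. The remainder of your plan --- extension by graph-norm density and closedness, Laplace transform, the subordination formula for the square root, consistency across the $L^p$-scale, and the adjoint argument for the $\delta_k$ identity --- is sound.
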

\begin{proof}
We will only prove the first identity; the second is proved in a similar 
manner. The corresponding results for $P_t^k$ can be proved along the same 
lines, or deduced
from the results for the resolvent using Laplace inversion, and in turn the 
identities 
involving $(I + t^2L_{k+1})^{-1/2}$ follow from this.
 
For  
$k$-forms $\omega \in C_{\rm c}^\infty(\Lambda^kTM,m)$ we have 
$P_t^{k+1}\dd_k\omega = 
\dd_kP_t^k\omega$ (see \cite{Bakry}). Here, the right-hand side is well defined 
as $P_t^k\omega \in {\mathsf D}_p(L_k) \subseteq {\mathsf D}_p(\dd_k)$ (which 
holds by analyticity of $P_t^k$). Now pick $\omega \in {\mathsf D}_p(\dd_k)$ 
and let $\omega_n \in C_{\rm c}^\infty(\Lambda^kTM)$ be a sequence converging 
to 
$\omega \in {\mathsf D}_p(\dd_k)$. Such a sequence exists by the definition of 
$\ud_k$ as 
a closed operator. Thus $\omega_n \to \omega$ and $\dd_k\omega_n \to 
\dd_k\omega$ in $L^p(\Lambda^kTM,m)$ respectively $L^p(\Lambda^{k+1}TM,m)$. The 
boundedness of $P_t^k$  and $P_t^{k+1}$ then implies that $P_t^k\omega_n \to 
P_t^k\omega$ and $P_t^{k+1}\dd_k\omega_n \to P_t^{k+1}\dd_k\omega$ in 
$L^p(\Lambda^kTM,m)$ respectively $L^p(\Lambda^{k+1}TM,m)$. As 
$P_t^{k+1}\dd_k\omega_n = \dd_kP_t^k\omega_n$ for every $n$, and as the 
left-hand 
side converges, we obtain that $\ud_kP_t^k\omega_n$ 
converges in $L^p(\Lambda^{k+1}TM,m)$. The closedness of $\dd_k$ shows that 
$P_t^k\omega \in {\mathsf D}_p(\dd_k)$ and that $P_t^{k+1}\dd_k\omega = 
\dd_kP_t^k\omega$.

Taking Laplace transforms on both sides we obtain
$$
(t^{-2} + L_{k+1})^{-1}\dd_k \omega = \dd_k(t^{-2} + L_k)^{-1}\omega
$$
from which one deduces the desired identity.
\end{proof}

\begin{remark}
Although we will not need it, we point out the following consequence of the 
preceding results:
 for all $k=0,1,\dots,n$ we have $${\mathsf D}_p(D_k) = {\mathsf 
D}_p(\dd_k)\cap {\mathsf D}_p(\delta_{k-1})$$
 with equivalent norms. 

To prove this, we note that Lemma \ref{lem:intersec}, combined with the domain equality 
of Theorem \ref{Riesztransform}, gives the inclusion
${\mathsf D}_p(D_k)\subseteq {\mathsf D}_p(\dd_k)\cap {\mathsf 
D}_p(\delta_{k-1})$.
To prove the reverse inclusion we argue as follows. For $\omega\in C_{\rm 
c}^\infty(\Lambda^kTM)$
we observed in \eqref{eq:dd} that  
\begin{equation}\label{eq:dd2}\Vert D_k\omega\Vert _p\eqsim_{p,k} \Vert \dd_k 
\omega\Vert_p + \Vert \delta_{k-1}\omega\Vert _p.
\end{equation}
By Theorem \ref{Riesztransform} and the estimate \eqref{eq:est-ddk} used in the proof of Lemma \ref{lem:intersec}
and its analogue for $\delta_{k-1}$, this equivalence of norms 
extends to arbitrary $\omega\in {\mathsf D}_p(L_k^{1/2})$.

Now let $\omega\in  {\mathsf D}_p(\dd_k)\cap {\mathsf D}_p(\delta_{k-1})$ be 
arbitrary. 
For $t>0$ we have $P_t^k \omega\in  {\mathsf D}_p(L_k) \subseteq {\mathsf 
D}_p(L_k^{1/2})$,
so that 
\begin{equation}\label{eq:dd3}\Vert D_k P_t^k\omega\Vert _p\eqsim_{p,k} \Vert \dd_k P_t^k\omega\Vert_p + 
\Vert \delta_{k-1}P_t^k\omega\Vert _p.
\end{equation}
By Lemma \ref{commutativity} we have $\Vert\dd_k P_t^k\omega \Vert_p = \Vert P_t^{k+1}\dd_k\omega \Vert_p \to \Vert\dd_k \omega \Vert_p$ as $t \downarrow 0$,
and similarly $\Vert\delta_{k-1} P_t^k\omega \Vert_p\to \Vert\delta_{k-1} \omega \Vert _p$.
As a consequence, $P_t^k\omega\to \omega$ in  ${\mathsf D}_p(\dd_k)\cap 
{\mathsf D}_p(\delta_{k-1})$. By \eqref{eq:dd3} and the closedness of $D_k$ we then also have  
$\omega\in \D_p(D_k)$ and 
 $P_t^k\omega \to \omega$ in $\D_p(D_k)$. We conclude that ${\mathsf D}_p(\dd_k)\cap 
{\mathsf D}_p(\delta_{k-1})\subseteq {\mathsf D}_p(D_k)$ and that \eqref{eq:dd2} 
holds for all $\omega\in  {\mathsf D}_p(\dd_k)\cap {\mathsf D}_p(\delta_{k-1})$.
\end{remark}

We now obtain the following result.

\begin{theorem}[$R$-bisectoriality of $D$]\label{RbisectorialityPi}
Let Hypothesis \ref{curvaturecondition} hold. For all $1 < p < \infty$ the 
Hodge--Dirac operator $D$ is $R$-bisectorial on $L^p(\Lambda TM, m)$.
\end{theorem}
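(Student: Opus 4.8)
The plan is as follows. For $t\in\RR\setminus\{0\}$ I will write down an explicit bounded formula for $(I+itD)^{-1}$ in terms of the resolvents of $L:=D^2=\mathrm{diag}(L_0,\dots,L_n)$ and of the first-order operators $\dd_k$ and $\delta_{k-1}$, verify it using the commutation relations of Lemma \ref{commutativity} together with the identity $D^2=L$ on $C_{\rm c}^\infty(\Lambda TM)$, and then read off $R$-bisectoriality from the $R$-sectoriality of the $L_k$ (Proposition \ref{prop:Rsectorial}) and the $R$-gradient bounds (Proposition \ref{gradientbounds}), using a routine perturbation argument to pass from the imaginary axis to a bisector.

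Concretely, fix $t\in\RR\setminus\{0\}$ and set
$$
R_t := (I+t^2L)^{-1} - itD(I+t^2L)^{-1},
$$
where $(I+t^2L)^{-1}:=\mathrm{diag}\big((I+t^2L_k)^{-1}\big)$. On the $k$-th summand the operator $D(I+t^2L)^{-1}$ acts as $(\dd_k+\delta_{k-1})(I+t^2L_k)^{-1}$, which is bounded because $\mathsf{R}\big((I+t^2L_k)^{-1}\big)=\mathsf{D}_p(L_k)\subseteq\mathsf{D}_p(L_k^{1/2})\subseteq\mathsf{D}_p(\dd_k)\cap\mathsf{D}_p(\delta_{k-1})$ by Lemma \ref{lem:intersec}; hence $R_t\in\Ll(L^p(\Lambda TM,m))$. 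To identify $R_t$ with $(I+itD)^{-1}$ I would argue on the core $C_{\rm c}^\infty(\Lambda TM)$ of $D$: this subspace is $D$-invariant, $D^2=L$ holds on it, and Lemma \ref{commutativity} applied componentwise gives $D(I+t^2L)^{-1}\omega=(I+t^2L)^{-1}D\omega$ for $\omega\in C_{\rm c}^\infty(\Lambda TM)$, so that $R_t\omega=(I+t^2L)^{-1}(I-itD)\omega$ there. Since also $(I-itD)\omega\in C_{\rm c}^\infty(\Lambda TM)$, a direct computation yields $R_t(I+itD)\omega=\omega$ and $(I+itD)R_t\omega=\omega$ for all $\omega\in C_{\rm c}^\infty(\Lambda TM)$. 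Because $R_t$ is bounded, $C_{\rm c}^\infty(\Lambda TM)$ is a core for $D$ and dense in $L^p(\Lambda TM,m)$, and $I+itD$ is closed, these identities extend to $R_t(I+itD)=I$ on $\mathsf{D}_p(D)$ and $(I+itD)R_t=I$ on $L^p(\Lambda TM,m)$; in particular $I+itD$ is boundedly invertible with $(I+itD)^{-1}=R_t$ (and $itD(I+itD)^{-1}=I-R_t$).

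It then remains to check $R$-boundedness. As $L_k$ is $R$-sectorial (Proposition \ref{prop:Rsectorial}), the family $\{(I+t^2L_k)^{-1}=t^{-2}(t^{-2}+L_k)^{-1}:t\in\RR\setminus\{0\}\}$ is $R$-bounded, hence so is $\{(I+t^2L)^{-1}:t\ne0\}$. On the $k$-th summand $itD(I+t^2L)^{-1}$ equals $it\dd_k(I+t^2L_k)^{-1}\oplus it\delta_{k-1}(I+t^2L_k)^{-1}$, and since $(I+t^2L_k)^{-1}$ depends only on $t^2$ while multiplying operators by unimodular scalars and by $\pm1$ does not change $R$-bounds, Proposition \ref{gradientbounds} shows that $\{itD(I+t^2L)^{-1}:t\ne0\}$ is $R$-bounded. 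As an operator matrix with finitely many $R$-bounded entries is $R$-bounded, $\{R_t:t\ne0\}=\{(I+itD)^{-1}:t\ne0\}$ is $R$-bounded (and likewise $\{itD(I+itD)^{-1}:t\ne0\}$). Setting $\lambda=i/t$, this says that $\{\lambda(\lambda-D)^{-1}:\lambda\in i\RR\setminus\{0\}\}$ is $R$-bounded; a standard Taylor-expansion argument for the resolvent, of the type used at the end of the proof of Lemma \ref{lem:Domination}, then upgrades this to $R$-boundedness of $\{\lambda(\lambda-D)^{-1}:\lambda\notin\overline{\Sigma_\vartheta^\pm}\}$ for some $\vartheta<\tfrac12\pi$ and, along the way, yields $\sigma(D)\subseteq\overline{\Sigma_\vartheta^\pm}$. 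Hence $D$ is $R$-bisectorial.

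I expect the identification $(I+itD)^{-1}=R_t$ to be the only genuinely delicate step: it is where one must keep track of domains and combine the commutation relations of Lemma \ref{commutativity} with $D^2=L$ on $C_{\rm c}^\infty(\Lambda TM)$ and the density/core properties. Once the resolvent formula is available, the $R$-boundedness is an immediate consequence of Propositions \ref{prop:Rsectorial} and \ref{gradientbounds}.
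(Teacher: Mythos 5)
Your proposal is correct and follows essentially the same route as the paper: the candidate resolvent $(I\mp itD)^{-1}=(I\pm itD)(I+t^2L)^{-1}$ (written in the paper as an explicit tridiagonal operator matrix), verified via Lemma \ref{commutativity} and $D^2=L$ on the core $C_{\rm c}^\infty(\Lambda TM)$, with $R$-boundedness read off from Proposition \ref{prop:Rsectorial}, Proposition \ref{gradientbounds}, and the entrywise criterion for operator matrices. The only (minor) difference is in the bookkeeping for the two-sided inverse identity: you push the resolvent past $D$ so that $D^2$ only ever acts on smooth compactly supported forms and then extend by density and closedness, whereas the paper justifies the boundedness of $\dd_{k-2}\delta_{k-2}(I+t^2L_{k-1})^{-1}$ and $\delta_{k-1}\dd_{k-1}(I+t^2L_{k-1})^{-1}$ directly by factoring through $(I+t^2L_k)^{-1/2}$.
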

\begin{proof}
We will start by showing that the set $\{it:\, t\in \RR,\,t \neq 0\}$ is 
contained in the 
resolvent set of $D$. We will do this by showing that $I - itD$ has a 
two-sided bounded inverse given by
$$
\resizebox{\linewidth}{!}{
$
\displaystyle
\left(\begin{array}{cccc}
(I + t^2L_0)^{-1} & it\delta_0(I +t^2L_1)^{-1} & &  \\
it\dd_0(I + t^2L_0)^{-1} & (I+t^2L_1)^{-1} & it\delta_1(I +t^2L_2)^{-1} & \\
\qquad \qquad \qquad \ddots &  & \ddots \qquad \qquad \qquad  \qquad \ddots&  \\
&  it\dd_{n-2}(I + t^2L_{n-2})^{-1} & (I + t^2L_{n-1})^{-1} & it\delta_{n-1}(I 
+ 
t^2L_n)^{-1} \\
& &  it\dd_{n-1}(I + t^2L_{n-1})^{-1} & (I + t^2L_n)^{-1}\\ \end{array} \right)
$
}
$$
with zeroes in the remaining entries away from the three main diagonals.
By the $R$-sectoriality of $L_k$ (Proposition \ref{prop:Rsectorial}) and the 
$R$-gradient bounds (Proposition \ref{gradientbounds}) all entries are bounded. 
It only remains to check that this matrix defines a two-sided inverse of $I-itD$. Let us first 
multiply with $I - itD$ from the left. It suffices to compute the three 
diagonals, as the other elements of the product clearly vanish. It is easy to 
see 
that the $k$-th diagonal element becomes 
\begin{equation}\label{eq:resolventD}
\begin{aligned}
\ & t^2\dd_{k-2}\delta_{k-2}(I + t^2L_{k-1})^{-1} + (I + t^2L_{k-1})^{-1} + 
t^2\delta_{k-1}\dd_{k-1}(I + t^2L_{k-1})^{-1}  
\\ & \qquad = (I + t^2L_{k-1})(I + t^2L_{k-1})^{-1}  = I
\end{aligned}
\end{equation}
using that $L_{k-1} = -(\ud_{k-2}\delta_{k-2} + \delta_{k-1}\dd_{k-1})$; 
obvious adjustments need to be made for $k = 1$ and $k=n$. 
For the two other 
diagonals it is easy to see that one gets two terms which cancel.

To make this argument rigorous, note that both $\dd_{k-2}\delta_{k-2}(I + 
t^2L_{k-1})^{-1}$
and $\delta_{k-1}\dd_{k-1}(I + t^2L_{k-1})^{-1} $ are well defined as bounded 
operators, so that it suffices to 
check the computations for $\omega\in C_{\rm c}^\infty(\Lambda TM)$. The 
asserted well-definedness and
boundedness of the first of these operators can be seen by noting that 
$$\dd_{k-2}\delta_{k-2}(I + t^2L_{k-1})^{-1} = \dd_{k-2}(I + t^2L_{k-2})^{-1/2} 
\circ \delta_{k-2}(I + t^2L_{k-1})^{-1/2},$$
using Lemma \ref{commutativity}; the boundedness of the other operator follows 
similarly.

If we multiply with $I - itD$ from the right and use Lemma 
\ref{commutativity}, we easily see that the product is again the identity.

It remains to show that the set $\{it(it - D)^{-1}: t \neq 0\} = \{(it - 
D)^{-1}: t\neq 0\}$ is $R$-bounded. For this, observe that the diagonal 
entries are $R$-bounded by the $R$-sectoriality of $L_k$. The $R$-boundedness 
of the other entries follows from the $R$-gradient bounds (Proposition 
\ref{gradientbounds}). Since a set of operator matrices is 
$R$-bounded precisely when each entry is $R$-bounded, we conclude that $D$ is 
$R$-bisectorial.
\end{proof}

\begin{proposition}\label{prop:LD2}
Let $1 < p < \infty$. Then $D^2 = L$ as densely defined closed
operators on $L^p(\Lambda TM, m)$.
\end{proposition}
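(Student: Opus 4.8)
The plan is to deduce $D^2=L$ from the coincidence, at a single point, of the resolvents of these two closed operators, using the explicit formula for the resolvent of $D$ established in the proof of Theorem \ref{RbisectorialityPi}. Write $M(t)$ for the operator matrix displayed there, so that $(I-itD)^{-1}=M(t)$ for every real $t\ne0$; its diagonal entries are $(I+t^2L_k)^{-1}$ and its nonzero off-diagonal entries are of the form $it\dd_{j}(I+t^2L_{j})^{-1}$ and $it\delta_{j}(I+t^2L_{j+1})^{-1}$. Since both $it$ and $-it$ belong to the resolvent set of $D$, the operator $I+t^2D^2=(I-itD)(I+itD)$, whose domain is precisely $\mathsf{D}_p(D^2)=\{x\in\mathsf{D}_p(D):Dx\in\mathsf{D}_p(D)\}$, is boundedly invertible, and the standard resolvent algebra for $D$ yields
\[
(I+t^2D^2)^{-1}=(I-itD)^{-1}(I+itD)^{-1}=\tfrac12\bigl((I-itD)^{-1}+(I+itD)^{-1}\bigr)=\tfrac12\bigl(M(t)+M(-t)\bigr).
\]

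The key point is then purely a matter of parity. The matrix $M(-t)$ is obtained from $M(t)$ by replacing $t$ by $-t$; its diagonal entries $(I+t^2L_k)^{-1}$ are unchanged, while every off-diagonal entry, carrying a factor $it$, changes sign. Hence $M(t)+M(-t)$ is the diagonal operator matrix with entries $2(I+t^2L_k)^{-1}$, $k=0,\dots,n$, and since $L=\mathrm{diag}(L_0,\dots,L_n)$ this means that
\[
(I+t^2D^2)^{-1}=(I+t^2L)^{-1},\qquad t\ne0.
\]
Because $L$ is sectorial (Theorem \ref{thm:HinftyLk}, or already the discussion following Lemma \ref{Cc}), $(I+t^2L)^{-1}$ is a bounded bijection of $L^p(\Lambda TM,m)$ onto $\mathsf{D}_p(L)$; consequently $\mathsf{D}_p(D^2)=\mathsf{R}\bigl((I+t^2L)^{-1}\bigr)=\mathsf{D}_p(L)$ (so $D^2$ is in particular closed and densely defined), and on this common domain $I+t^2D^2=I+t^2L$, whence $D^2=L$.

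The manipulations above are routine, and I do not expect a genuine obstacle. The two mild points to be careful with are: first, the bookkeeping of domains, namely that $(I-itD)(I+itD)$ has exactly the domain $\mathsf{D}_p(D^2)$ and equals $I+t^2D^2$ there, together with the verification of the resolvent identity in the first display (which rests only on $(I\pm itD)(I\pm itD)^{-1}=I$ and the commutation of the two resolvents); second, the fact that the entries of $M(t)$ are bona fide bounded operators, which is precisely what Lemma \ref{commutativity} and Proposition \ref{gradientbounds} delivered in the proof of Theorem \ref{RbisectorialityPi}. No new estimates are required.
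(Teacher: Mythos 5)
Your argument is correct. The resolvent identity $(I-itD)^{-1}(I+itD)^{-1}=\tfrac12\bigl((I-itD)^{-1}+(I+itD)^{-1}\bigr)$ follows from $(I-itD)^{-1}+(I+itD)^{-1}=(I+itD)^{-1}\bigl[(I+itD)+(I-itD)\bigr](I-itD)^{-1}$ together with the commutativity of the two resolvents, the domain of $(I-itD)(I+itD)$ is indeed exactly $\mathsf{D}_p(D^2)$, and the parity observation on $M(t)$ is valid since Theorem \ref{RbisectorialityPi} establishes $(I-itD)^{-1}=M(t)$ for all real $t\ne0$, so that $(I+itD)^{-1}=M(-t)$. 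Your route differs from the paper's in a genuine way, even though both lean on the same explicit matrix $M(t)$: the paper first verifies the operator identity $D^2(I+t^2L)^{-1}=L(I+t^2L)^{-1}$ (essentially the diagonal computation \eqref{eq:resolventD}), which only yields $\mathsf{D}_p(L)\subseteq\mathsf{D}_p(D^2)$ with $D^2=L$ there, and must then handle the reverse inclusion $\mathsf{D}_p(D^2)\subseteq\mathsf{D}_p(L)$ by a separate limiting argument, letting $t\downarrow0$ in $L(I+t^2L)^{-1}\omega=(I+t^2L)^{-1}D^2\omega$ (via Lemma \ref{commutativity}) and invoking the closedness of $L$. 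Your parity trick identifies the two resolvents outright, so equality of the domains and of the operators comes in a single step, with no appeal to closedness of $L$ or to the commutation lemma beyond what is already baked into $M(t)$; the price is that you must be slightly more careful with the purely operator-theoretic bookkeeping around $(I-itD)(I+itD)=I+t^2D^2$, which you have done correctly.
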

This result may seem obvious by formal computation, but the issue is to 
rigorously
justify the matrix multiplication involving products of unbounded
operators.
\begin{proof}
It suffices to show that $\mathsf{D}_p(L) \subset \mathsf{D}_p(D^2)$ and 
$D^2(I+t^2L)^{-1} = L(I+t^2L)^{-1}$, or equivalently,  
$
(\dd_{k-1}\delta_{k-1} + \delta_k\ud_k)(I+t^2L_k)^{-1} = L_k(I+t^2L_k)^{-1}$ 
for all $k=0,1,\dots,n$. 
The rigorous justification of the equivalent identity \eqref{eq:resolventD}
has already been given in the course of the above proof.

If $\omega \in \mathsf{D}_p(D^2)$, then by Lemma \ref{commutativity} we find 
$$
D^2(I + t^2L)^{-1}\omega = (I + t^2L)^{-1}D^2\omega \to D^2\omega, \qquad t\to 
0.
$$
Here we used that $(I + t^2L)^{-1}$ converges to $I$ strongly as $t\to 0$ by 
the general theory of sectorial operators.  But then we find that
$$
L(I + t^2L)^{-1}\omega = D^2(I + t^2L)^{-1}\omega \to D^2\omega, \qquad t\to 0.
$$ 
As $(I + t^2L)^{-1}\omega \to \omega$ as $t\to 0$, the closedness of $L$ 
gives $\omega \in \D(L)$ and $L\omega = D^2\omega$.
\end{proof}

We are now ready to prove that $D$ has a bounded $H^\infty$-calculus on 
$L^p(\Lambda TM, m)$.

\begin{theorem}[Bounded $H^\infty$-functional calculus for
$D$]\label{bddfunctionalcalculus}
Let Hypothesis \ref{curvaturecondition} hold. For all  $1 < p < \infty$
the Hodge--Dirac operator $D$ on $L^p(\Lambda TM, m)$ has a bounded 
$H^\infty$-calculus on a bisector.
\end{theorem}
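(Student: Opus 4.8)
The plan is to obtain Theorem \ref{bddfunctionalcalculus} as a formal consequence of Proposition \ref{prop:ADM}, feeding in the two inputs already established: the $R$-bisectoriality of $D$ (Theorem \ref{RbisectorialityPi}) and the bounded $H^\infty$-calculus of its square. First I would note that for $1<p<\infty$ the space $L^p(\Lambda TM,m)$ has finite cotype (it is an $L^p$-space, hence of cotype $\max\{2,p\}$), so the standing hypothesis of Proposition \ref{prop:ADM} is met. By Theorem \ref{RbisectorialityPi}, $D$ is $R$-bisectorial, say of angle $\omega_R^\pm(D)=\sigma<\tfrac12\pi$. The non-injectivity of $D$ (its kernel is nontrivial, e.g.\ the constants) is harmless: Proposition \ref{prop:ADM} and the functional-calculus framework of Section \ref{sec:sectorial} are deliberately formulated without injectivity assumptions, cf.\ Remark \ref{rem}.

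Next I would check that $L=\mathrm{diag}(L_0,\dots,L_n)$ has a bounded $H^\infty$-calculus of some angle $\alpha<\tfrac12\pi$ on $L^p(\Lambda TM,m)=\bigoplus_{k=0}^n L^p(\Lambda^kTM,m)$. This is immediate from Theorem \ref{thm:HinftyLk}: each $L_k$ has a bounded $H^\infty(\Sigma_{\alpha_k}^+)$-calculus with $\alpha_k<\tfrac12\pi$, and since there are only finitely many summands, with $\alpha:=\max_{0\le k\le n}\alpha_k<\tfrac12\pi$ one has, for every $\psi\in H^\infty(\Sigma_\alpha^+)\cap H^1(\Sigma_\alpha^+)$, $\psi(L)=\mathrm{diag}(\psi(L_0),\dots,\psi(L_n))$ with $\Vert\psi(L)\Vert\le\max_k\Vert\psi(L_k)\Vert\lesssim\Vert\psi\Vert_\infty$. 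By Proposition \ref{prop:LD2}, $D^2=L$ as closed densely defined operators on $L^p(\Lambda TM,m)$, so $D^2$ has a bounded $H^\infty(\Sigma_\alpha^+)$-calculus.

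Finally I would apply Proposition \ref{prop:ADM}: pick $\omega\in(\tfrac12\alpha,\tfrac12\pi)$ with also $\omega>\sigma$, so that $2\omega\in(\alpha,\pi)$ and $D^2$ has a bounded $H^\infty(\Sigma_{2\omega}^+)$-calculus; the equivalence $(1)\Leftrightarrow(2)$ in Proposition \ref{prop:ADM} then gives that $D$ admits a bounded $H^\infty(\Sigma_\omega^\pm)$-calculus, which is precisely the assertion. I do not expect a genuine obstacle here, since the substantive work (establishing $R$-bisectoriality of $D$, the operator identity $D^2=L$, and the bounded $H^\infty$-calculus of each $L_k$ via Yoshida's square function estimates) has already been carried out; the only points requiring care are the finite-cotype hypothesis, the bookkeeping of angles so that the squared-sector angle $2\omega$ exceeds the $H^\infty$-angle of $L$, and the non-injectivity remark above.
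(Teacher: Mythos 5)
Your proposal is correct and follows exactly the paper's route: the proof given there is precisely to combine Proposition \ref{prop:ADM} with Theorems \ref{thm:HinftyLk} and \ref{RbisectorialityPi} and Proposition \ref{prop:LD2}. Your additional remarks (finite cotype of $L^p$, the diagonal structure of $\psi(L)$, the angle bookkeeping, and the harmlessness of the nontrivial kernel via Remark \ref{rem}) merely make explicit what the paper leaves implicit.
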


\begin{proof}
With all the preparations done, this now follows by combining Proposition 
\ref{prop:ADM} with
Theorems \ref{thm:HinftyLk} and \ref{RbisectorialityPi} and Proposition \ref{prop:LD2}. 
\end{proof}

\smallskip
{\em Acknowledgement} -- The authors thank Alex Amenta and Pierre Portal for 
helpful comments.

\end{document}